\DeclareMathOperator{\Id}{Id}
\DeclareMathOperator{\vol}{vol}
\DeclareMathOperator{\supp}{supp}
\DeclareMathOperator{\SL}{SL}
\DeclareMathOperator{\ad}{ad}
\DeclareMathOperator{\Ad}{Ad}
\DeclareMathOperator{\inj}{inj}
\DeclareMathOperator{\im}{im}
\newcommand{\N}{\mathbb{N}}
\newcommand{\Z}{\mathbb{Z}}
\newcommand{\Q}{\mathbb{Q}}
\newcommand{\R}{\mathbb{R}}
\newcommand{\LieG}{\mathfrak{g}}
\newcommand{\LieN}{\mathfrak{n}}
\newcommand{\LieH}{\mathfrak{h}}
\newcommand{\LieL}{\mathfrak{l}}
\newcommand{\height}{\mathrm{ht}}
\newcommand{\disc}{\mathrm{disc}}
\newcommand{\mytag}[2]{%
\text{#1}%
\@bsphack
\begingroup
\@onelevel@sanitize\@currentlabelname
\edef\@currentlabelname{%
\expandafter\strip@period\@currentlabelname\relax.\relax\@@@%
}%
\protected@write\@auxout{}{%
\string\newlabel{#2}{%
{#1}%
{\thepage}%
{\@currentlabelname}%
{\@currentHref}{}%
}%
}%
\endgroup
\@esphack
}
\DeclareFontFamily{U}{mathb}{\hyphenchar\font45}
\DeclareFontShape{U}{mathb}{m}{n}{
<5> <6> <7> <8> <9> <10> gen * mathb
<10.95> mathb10 <12> <14.4> <17.28> <20.74> <24.88> mathb12
}{}
\DeclareSymbolFont{mathb}{U}{mathb}{m}{n}
\DeclareMathSymbol{\bigast}{2}{mathb}{"06}
\def\XXint#1#2#3{{\setbox0=\hbox{$#1{#2#3}{\int}$}
\vcenter{\hbox{$#2#3$}}\kern-.5\wd0}}
\theoremstyle{plain}
\newtheorem{theorem}{Theorem}[section]
\newtheorem{proposition}[theorem]{Proposition}
\newtheorem{lemma}[theorem]{Lemma}
\newtheorem{corollary}[theorem]{Corollary}
\theoremstyle{definition}
\theoremstyle{remark}
\newtheorem{remark}[theorem]{Remark}
\Crefname{enumi}{Property}{Properties}
\Crefname{alternativei}{Alternative}{Alternatives}
\Crefname{subsection}{Subsection}{Subsections}
\begin{document}
\selectlanguage{english}


\title[Polynomial effective density in quotient of $\SL_2(\Q_p) \times \SL_2(\Q_p)$]{Polynomial effective density in quotient of $\SL_2(\Q_p) \times \SL_2(\Q_p)$}

\author{Zuo Lin}
\address{Department of Mathematics, UC San Diego, 9500 Gilman Drive, La Jolla, CA 92093, USA}
\email{zul003@ucsd.edu}

\date{\today}

\begin{abstract}
We prove an effective density theorem with polynomial error rate for orbits of upper triangular subgroup of $\SL_2(\Q_p)$ in $\SL_2(\Q_p) \times \SL_2(\Q_p)$ for prime number $p > 3$. The proof is based on the use of Margulis function, a restricted projection theorem on $\Q_p^3$, and spectral gap of the ambient space.  
\end{abstract}

\maketitle

\setcounter{tocdepth}{1}

\section{Introduction}

In this paper, we prove an effective density theorem with polynomial error rate for orbits of upper triangular subgroup of $\SL_2(\Q_p)$ in $\SL_2(\Q_p) \times \SL_2(\Q_p)$ for prime number $p > 3$. This is an analogue to the main result in \cite{LM23} in $p$-adic case. We will prove an effective equidistribution result in a forthcoming paper. For history and recent development of effective density or equidistribution results with polynomial error rate, the reader could consult \cite{LM23}, \cite{LMW22}, \cite{LMW23}, \cite{LMWY23} and \cite{Yan23}. 

Now we fix some notations to state the main result. In this paper, we will always use $p$ to denote a prime number with $p > 3$. Let
\begin{align*}
    G = \SL_2(\Q_p) \times \SL_2(\Q_p)
\end{align*}
and
\begin{align*}
    H = \{(g, g): g \in \SL_2(\Q_p)\} \cong \SL_2(\Q_p).
\end{align*}
Let $\Gamma$ be a lattice in $G$ and put $X = G /\Gamma$. Let $P$ be the group of upper triangular matrices in $H$.  

Let
\begin{align*}
    K = \SL_2(\Z_p) \times \SL_2(\Z_p)
\end{align*}
and 
\begin{align*}
    K[n] = \ker(\SL_2(\Z_p) \times \SL_2(\Z_p) \to \SL_2(\Z/p^n\Z) \times \SL_2(\Z/p^n\Z)).
\end{align*}
$\{K[n]\}_{n \in \N}$ form a basis of neighborhood of $e \in G$. For a positive real number $r > 0$, we define $K[r] = K[\lfloor r \rfloor]$. 

An orbit $H.x \subset X$ is periodic if $H \cap \mathrm{Stab}(x)$ is a lattice in $H$. For the semisimple group $H$, $H.x \subset X$ is periodic if and only if $H.x$ is closed in $X$. 

Let $|\cdot|_p$ be the $p$-adic absolute value on $\Q_p$ with uniformizer $p$ and let $|\cdot|$ be the Haar measure on $\Q_p$ with $|\Z_p| = 1$. Let $\|\cdot\|_p$ be the maximum norm on $\mathrm{Mat}_2(\Q_p) \times \mathrm{Mat}_2(\Q_p)$ with respect to the standard basis. For every $T > 0$ and subgroup $L \subset G$, let 
\begin{align*}
    B_{L}(e, T) = \{g \in L: \|g - I\|_p \leq T\}.
\end{align*}
Note that $K[n] = B_{K}(e, p^{-n})$. We will also use $L_T$ to denote $B_L(e, T)$ in this paper. 

The following is the main theorem of the paper. It is a $p$-adic analogue to \cite[Theorem 1.1]{LM23}.
\begin{theorem}\label{thm:Main}
    Suppose $\Gamma$ is an arithmetic lattice. For every $0 < \delta < \frac{1}{2}$, every $x_0 \in X$, every $p^N \gg_{\inj (X)} 1$, at least one of the following holds. 
    \begin{enumerate}
        \item For every $x \in X$, we have
        \begin{align*}
            K[\kappa_0 \delta N - C_0].x \cap B_P(e, p^{A_0 N}).x_0 \neq \emptyset.
        \end{align*}
        \item There exists $x' \in X$ such that $H. x'$ is periodic with $\vol(H.x') \leq p^{\delta N}$, and
        \begin{align*}
            x' \in K[N - C_0].x_0.
        \end{align*}
    \end{enumerate}
    The constants $\kappa_0$, $A_0$, and $C_0$ are positive constant depending only on $(G, H, \Gamma)$. 
\end{theorem}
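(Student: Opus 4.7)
The plan is to prove the theorem by contraposition on alternative (2): assuming no periodic $H$-orbit of volume at most $p^{\delta N}$ lies in $K[N-C_0].x_0$, I will produce the near-overlap asserted by alternative (1). Write $P = AU$ with $A$ the diagonal torus and $U$ the upper-triangular unipotent of $H$, and fix an $\Ad(H)$-invariant decomposition $\LieG = \LieH \oplus \mathfrak{r}$ in which $\mathfrak{r} = \{(X,-X) : X \in \mathfrak{sl}_2(\Q_p)\} \cong \Q_p^3$ carries the adjoint representation of $\SL_2$. The $A$-action on $\mathfrak{r}$ has distinct weights (the eigenvalues being $p^{2t}, p^{-2t}, 1$ when $a_t = \mathrm{diag}(p^{-t},p^{t})$), and the $U$-action is a standard upper-triangular one-parameter subgroup. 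All the non-trivial geometry is played out in the $P$-module $\mathfrak{r}$.

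First I would set up non-divergence via a Margulis function. Following the Eskin--Margulis and Benoist--Quint framework, adapted to the $p$-adic semigroup $A^+ U$ acting on $X$, construct a proper $f: X \to [1,\infty)$ that contracts on average under $A^+ U$ up to a bounded term. This forces that on a time window of scale $p^{A_0 N}$ a definite fraction of the orbit $B_P(e, p^{A_0 N}).x_0$ lies in a compact set $\Omega \subset X$ of injectivity radius at least $p^{-C_0}$. If alternative (1) fails, there is some $x \in X$ such that no point of $B_P(e, p^{A_0 N}).x_0 \cap \Omega$ lies within $K[\kappa_0 \delta N - C_0]$ of $x$. Pigeonholing over $\Omega$ at a scale slightly coarser than $p^{-\kappa_0 \delta N}$ then produces many close pairs $(p_1.x_0, p_2.x_0)$ whose relative position $g = p_2 p_1^{-1}$ has a nonzero transverse component $v_g \in \mathfrak{r}$ at that scale, yielding a large collection $F = \{v_g\} \subset \mathfrak{r}$.

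Next, I would run a $P$-equivariant bootstrap on $F$ using the restricted projection theorem on $\Q_p^3$. Sliding each displacement $v_g$ along the $P$-orbit of $x_0$ amounts to applying a one-parameter family of linear maps on $\mathfrak{r}$ whose eigenvalue pattern, twisted by $U$, is exactly of the form for which the $p$-adic Bourgain / He--de Saxcé type restricted projection estimate should apply. The conclusion is a dichotomy: either the saturation $P.F$ covers $\mathfrak{r}$ effectively at scale $p^{-\kappa_0 \delta N}$, in which case transporting by $P$ gives alternative (1); or $F$ concentrates along an $\Ad(H)$-invariant subvariety of $\mathfrak{r}$. In the concentrated case, the concentration locus is the Lie algebra of a closed $H$-orbit passing within $K[N - C_0]$ of $x_0$, which supplies the desired $x'$.

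The spectral gap of $L^2_0(X)$ is what makes everything effective: it sharpens the non-divergence and pigeonhole step via effective mixing of $A$, and it provides the volume bound $\vol(H.x') \leq p^{\delta N}$ through an avoidance estimate excluding periodic $H$-orbits of larger volume inside $K[N-C_0].x_0$. The main obstacle I anticipate is the $p$-adic restricted projection theorem itself: the real-variable counterpart underlying \cite{LM23} rests on Shmerkin--Bourgain projection estimates, and the analogue over $\Q_p$ must be proved by adapting discretized-set incidence methods to the local field, or extracted from the existing literature on projections over non-archimedean fields. A secondary challenge will be tracking the three constants $\kappa_0, A_0, C_0$ through the three layers (Margulis function, projection theorem, spectral gap) so that the final exponents are compatible and depend only on the triple $(G, H, \Gamma)$.
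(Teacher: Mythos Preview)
Your plan has the right ingredients but misassigns their roles, and there is a genuine gap in the passage from transverse displacements to density. First, $X$ is compact here (the arithmetic construction forces $\Gamma$ cocompact), so a Margulis function is not needed for non-divergence; in the paper it is used instead to \emph{boost the transverse dimension} of a set of nearby returns up to $1-\theta$ at a controlled scale (\cref{pro:MargulisMain}). More seriously, your dichotomy ``either $P.F$ covers $\mathfrak{r}$ effectively \ldots\ in which case transporting by $P$ gives alternative (1)'' skips the essential step: having displacements filling a ball in $\mathfrak{r}$ gives local density near one base point, not density in all of $X$. The paper closes this gap with Venkatesh's argument (\cref{thm:VenkateshTrick}): the restricted projection theorem is used not to cover $\mathfrak{r}$ but to project the transverse dimension onto the single $V$-direction, producing a measure $\rho$ on $\Z_p$ of dimension close to $1$; then the spectral gap shows that $\int\int f(d_\lambda u_r v_s.x)\,dr\,d\rho(s)$ equidistributes, which is what actually yields alternative (1). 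So the spectral gap enters not to sharpen non-divergence or pigeonholing but as the engine of an equidistribution step you are missing entirely.

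Your mechanism for the periodic-orbit alternative is also off. Since $\mathfrak{r}$ is an irreducible $H$-module (the adjoint representation of $\SL_2$), there is no proper $\Ad(H)$-invariant linear locus for $F$ to concentrate on. In the paper, alternative (2) arises from an effective closing lemma (\cref{pro:MainClosingLemma}) run \emph{before} the Margulis-function and projection steps, not as a degenerate output of the projection dichotomy: if the initial Margulis function is too large one extracts non-commuting lattice elements $\gamma_r$ that nearly fix the vector $v_H$ defining $H$, and an arithmetic argument (\cref{lem:AlmostClosedOrbit,lem:ClosedOrbitVolume}) upgrades ``nearly fixes'' to ``fixes'' and bounds the volume via the discriminant of the resulting $F$-subgroup. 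The volume bound $\vol(H.x') \le p^{\delta N}$ is thus arithmetic, not an avoidance estimate coming from the spectral gap.
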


\cref{thm:Main} follows from the following proposition. 

Let
\begin{align*}
    N = \biggl\{n(r, s) = \biggl(\begin{pmatrix}
        1 & r + s\\
         & 1
    \end{pmatrix}, \begin{pmatrix}
        1 & r\\
         & 1
    \end{pmatrix}\biggr): r, s \in \Q_p\biggr\}
\end{align*}
and $U = \{n(r, 0) : r \in \Q_p\}$. We will use $u_r$ to denote $n(r, 0)$. Let $V = \{n(0, s): s \in \Q_p\}$. We will use $v_s$ to denote $n(0, s)$. We have $N = UV$. 

\begin{proposition}\label{pro:Main}
    Suppose $\Gamma$ is an arithmetic lattice. There exists some $\eta_0 > 0$ depending on $X$ with the following property. 

    Let $0 < \theta, \delta < \frac{1}{2}$, $0 < \eta < \eta_0$, and $x_0 \in X$. There are $\kappa_1$ and $A_1$ depending on $\theta$, and $N_1$ depending on $\delta, \eta$ so that for all $N > N_1$ at least one of the following holds. 
    \begin{enumerate}
        \item There exists a finite subset $I \subset \Z_p$ so that both of the following are satisfied. 
        \begin{enumerate}
            \item The set $I$ supports a probability measure $\rho$ which satisfies
            \begin{align*}
                \rho(J) \leq C_{\theta} |J|^{1 - \theta}
            \end{align*}
            for every open subgroup $J \subset \Z_p$ with $|J| \geq p^{-\delta \kappa_1N}$ where $C_{\theta} \geq 1$ only depends on $\theta$. 
            \item There is a point $y_0 \in X$ so that 
            \begin{align*}
                B_{P}(e, T^{A_1}).x_0 \cap K[\delta \kappa_1 N - C_1]v_s.y_0
            \end{align*}
            for all $s \in I \cup \{0\}$. 
        \end{enumerate}
        \item There exists $x \in X$ so that $H.x$ is periodic with $\vol(H.x) \leq p^{\delta N}$ and 
        \begin{align*}
            x \in K[N- C_1].x_0.
        \end{align*}
    \end{enumerate}
    The constant $C_1$ depends only on $X$. 
\end{proposition}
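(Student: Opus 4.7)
\textbf{Proof proposal for \cref{pro:Main}.}
The plan is to implement the $p$-adic analogue of the Lindenstrauss--Mohammadi strategy. The starting point is the $U$-orbit $\{u_r . x_0 : r \in B_{\Q_p}(0, p^N)\}$. For a pair of parameters $r \neq r'$ for which the points $u_r.x_0$ and $u_{r'}.x_0$ return very close in $X$, the relative position records an element of $G$ close to the identity that does \emph{not} lie in $U$. Decomposing this relative position along the transverse directions in $\LieG / \LieU$, the $V$-component is the natural source of the parameters $s \in I$ demanded in alternative~(1). The bulk of the argument is therefore to harvest a controlled supply of transverse displacements and to certify that their projection to $V$ is ``spread out'' in the Frostman sense.

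First, I would introduce a Margulis function $\omega$ on $X$ adapted to the family of periodic $H$-orbits of volume at most $p^{\delta N}$, designed so that $\omega(x)$ is large precisely when $x$ is close to such an orbit (or into the cusp). Running $\omega$ along the $U$-flow starting from $x_0$ produces the dichotomy: if the $U$-average of $\omega$ stays bounded on a sufficiently large proportion of $B_{\Q_p}(0, p^N)$, one is in the ``spreading'' regime and aims for alternative~(1); otherwise the Margulis-function spike must be explained by proximity to an actual periodic orbit of the requisite volume, yielding alternative~(2). The spectral gap on $L^2(X)$ is used here to convert the a priori averaged bounds on $\omega$ into pointwise statements at a polynomial loss in $N$, which determines the admissible ranges of $\kappa_1$ and $A_1$.

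In the spreading regime, the next step is to produce the set $I$. For $r$'s along which the $U$-orbit does not concentrate near a low-volume periodic $H$-orbit, the pigeonhole principle yields many pairs $(r, r')$ with $u_{r'} u_r^{-1}$ conjugated by a suitable diagonal element $a_t \in P$ landing in $B_G(e, 1) \setminus B_U(e, \epsilon)$. Writing such conjugated displacements in coordinates $(u, v, \text{other})$ adapted to the decomposition $\LieG = \LieU \oplus V \oplus (\text{transverse to } H)$, and further renormalizing in the $P$-direction, we obtain a finite set $F$ of transverse displacements whose $V$-coordinates form the candidate set $I \subset \Z_p$. The Frostman-type bound $\rho(J) \leq C_\theta |J|^{1-\theta}$ is then secured by the $p$-adic restricted projection theorem on $\Q_p^3$ applied to $F$, exploiting the non-degenerate curve along which the renormalizations move the displacement.

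The main obstacle will be the coordination of the three ingredients. The Margulis function controls averaged excursions, the spectral gap converts these into pointwise control only up to polynomial errors, and the restricted projection theorem requires that the input displacement set already have quantitatively non-planar structure at the relevant scale $p^{-\delta \kappa_1 N}$. The delicate step is to show that the transverse displacements produced by generic $U$-recurrence are genuinely transverse to the planes in $\Q_p^3$ on which the projection to $V$ could degenerate; this non-concentration on planes is what ultimately forces the Frostman exponent $1 - \theta$ rather than a weaker bound. Tuning $\kappa_1$, $A_1$, and $N_1$ so that the excursion probability, the spectral gap error, and the dimension hypothesis of the projection theorem are simultaneously met, while keeping $\delta$ and $\theta$ independent parameters, is expected to be the most intricate part of the proof.
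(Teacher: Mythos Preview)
Your proposal has the right high-level architecture (dichotomy with periodic orbits, harvest transverse displacements, project to $V$ via the restricted projection theorem), but several key mechanisms are misidentified, and one essential step is missing entirely.

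\textbf{Role of the Margulis function.} In the paper the Margulis function $f_{N,\alpha}$ is \emph{not} designed to detect proximity to low-volume periodic $H$-orbits. It is an energy function on an $H$-thickened $U$-orbit, namely $f_{N,\alpha}(z)=\sum_{w\in I_N(z)}\|w\|_p^{-\alpha}$ where $I_N(z)$ records the transverse displacements of nearby sheets of the orbit through $z$. The dichotomy with periodic orbits is produced instead by an \emph{effective closing lemma} (\cref{pro:MainClosingLemma}): if $f_{N,\alpha}$ is too large at some point, one extracts two noncommuting lattice elements nearly stabilising $v_H$, and arithmetic input (discriminant bounds, \cref{lem:ClosedOrbitVolume}, \cref{lem:AlmostClosedOrbit}) forces a genuine periodic orbit nearby with controlled volume. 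Your $\omega$ adapted to periodic orbits is a different object and does not by itself produce the quantitative energy bound $f_{N,\alpha}\le p^{DN}$ that the rest of the argument consumes.

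\textbf{Missing dimension-boosting iteration.} You jump from ``pigeonhole yields many pairs $(r,r')$'' directly to applying the projection theorem. The restricted projection theorem (\cref{thm:ProjDecoupling2}) requires as input a finite set $F\subset\mathfrak r$ satisfying an $\alpha$-energy bound $\sum_{w'\neq w}\|w'-w\|^{-\alpha}\le D(\#F)^{1+\epsilon}$. The closing lemma only gives the crude bound $f_{N,\alpha}\le p^{DN}$, which is far too weak. The heart of \cref{sec:MargulisFunctions} is an \emph{iteration}: one repeatedly convolves with the random walk $\nu$, applies the contraction estimate from \cref{lem:LinearAlgebra} to decrease $\int f_{\mathcal E}\,d\nu^{(\ell)}$ geometrically, then uses Chebyshev plus a covering argument to pass to a new set $\mathcal E_1$ on which the pointwise bound improves by a definite amount. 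After $O_\epsilon(1)$ steps this yields the energy condition. Without this step you have no way to certify the Frostman-type input to the projection theorem, so the last paragraph of your sketch cannot be executed.

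\textbf{Spectral gap.} The spectral gap of $L^2(X)$ is not used anywhere in the proof of \cref{pro:Main}; it enters only in \cref{sec:Venkatesh} to pass from \cref{pro:Main} to \cref{thm:Main} via Venkatesh's argument. The averaged-to-pointwise passage inside the Margulis function argument is pure Chebyshev and pigeonhole (\cref{lem:MargulisChebyshev}, \cref{lem:MargulisCOvering}), not mixing.
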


\cref{thm:Main} follows from \cref{pro:Main} by an argument due to Venkatesh \cite{Ven10}. See \cite{Ven10} or \cite[Section 4]{LM23} for detailed discussion. \cref{sec:Venkatesh} is devoted to this argument in our case. 

The proof of \cref{pro:Main} follows a similar strategy to \cite{LM23}. We provide a sketch of the idea of the proof here. For a comprehensive outline of this strategy, the reader could consult \cite[Section 1]{LM23} or \cite[Section 1]{LMW23}. 

\begin{enumerate}[label=Step \arabic*.]
    \item By working with a small thickening in $H$-direction of orbit $P.x_0$, we show that either case~(2) in \cref{pro:Main} holds, or we can find some point $x$ in that thickening of $B_{P}(e, p^{O(\delta N)}).x_0$ so that any two nearby points in have distance $ > p^{-N}$ transversal. This is done in \cref{sec:ArithmeticLatticesClosedHOrbitVolume} and \cref{sec:ClosingLemma}. \cref{sec:ArithmeticLatticesClosedHOrbitVolume} provides an estimate of volume of a closed $H$-orbit via arithmetic informations following methods in \cite{EMV09} and \cite{ELMV11}. \cref{sec:ClosingLemma} proves an effective closing lemma. 
    \item Assuming case~(2) in \cref{pro:Main} does not hold, we use a Margulis function to show that the translate the thickening of $B_{P}(e, p^{O(\delta N)}).x_0$ in step~1 by a random element of $B_{P}(e, p^{O_{\theta}(N)})$ have dimension $1 - \theta$ transverse to $H$ at scale $p^{-O(\delta N )}$. This step is done in \cref{sec:MargulisFunctions}. The proof is similar to \cite[Section 7]{LM23}. 
    \item In the third step, we use a restricted projection theorem in $\mathbb{Q}_p^3$ discussed below with some arguments in homogeneous dynamics, to project the aforementioned dimension to the direction $V$. 
\end{enumerate}

We indicate the main difference here. One of the ingredients in the final step of \cite{LM23} is a restricted projection theorem from incidence geometry, \cite[Theorem 5.2]{LM23}. It is a finitary version of \cite[Theorem 1.2]{KOV21}. The proof is based on the works of Wolff and Schlag, \cite{Wol00}, \cite{Sch03} using an incidence estimate on circles in Euclidean space following from a cell decomposition theorem due to Clarkson, Edelsbrunner, Guibas, Sharir,
and Welzl, \cite{CEHGLSW90}. 

However, the arguments in \cite{KOV21} (see also \cite[Appendix B]{LM23}) relied on an incidence bound for circles in $\R^3$ which does not hold in $\Q_p^3$. Recently, Gan, Guo and Wang proved a restricted projection theorem in $\R^n$ using decoupling inequality, \cite{GGW24}. 

Our proof of \cref{thm:Main} uses a restricted projection theorem (\cref{thm:ProjDecoupling2}) in $\Q_p^3$ proved in \cite{JL}. We will also state it after we introduce some notations in this section. The proof of it is similar to \cite{GGW24}, which make use of decoupling inequality for moment curve in $\Q_p^n$, see \cite{JL}. We remark here that the restricted projection theorem used in this paper could also be proved using decoupling of cone over parabola with methods in \cite{GGGHMW24}.  

Now we introduce some notations to state the restricted projection theorem. Let $\mathfrak{r} = \mathfrak{sl}_2(\Q_p) \oplus \{0\}$. Throughout this paper, we will always use the following notation for elements $w \in \mathfrak{r}$: 
\begin{align*}
    w = \begin{pmatrix}
        w_{11} & w_{12}\\
        w_{21} & -w_{11}
    \end{pmatrix}
\end{align*}
where $w_{ij} \in \Q_p$. 

Note that in the above coordinate, we have the following expression of $\Ad_{u_r} w$ for all $r \in \Q_p$ and $w \in \mathfrak{r}$: 
\begin{align*}
    \Ad_{u_r} w = \begin{pmatrix}
        w_{11} + w_{21}r & w_{12} - 2w_{11}r - w_{21}r^2\\
        w_{21} & - w_{11} - w_{21}r
    \end{pmatrix}.
\end{align*}

Let $\xi_r(w) = \bigl(\Ad_{u_r} w\bigr)_{12} = w_{12} - 2w_{11}r - w_{21}r^2$ and view it as a $1$-parameterized family of projections form $\Q_p^3$ to $\Q_p$, we have the following restricted projection theorem. 

\begin{theorem}\label{thm:ProjDecouplingIntro}
    Let $p > 3$ be a prime number. Let $0 < \alpha < 1$, $0 < b_0 = p^{-l_0} < b_1 = p^{-l_1} < 1$ be three parameters. Let $E \subset B_{\mathfrak{r}}(0, b_1)$ be so that 
    \begin{align*}
        \frac{\#(E \cap B_{\mathfrak{r}}(w, b))}{\#E} \leq D\cdot (\frac{b}{b_1})^{\alpha}
    \end{align*}
    for all $w \in \mathfrak{r}$ and all $b \geq b_0$, and some $D \geq 1$. Let $0 < \epsilon < 10^{-70}$ and let $J$ be a ball in $\Z_p$. Let $\xi_r$ be the following map: 
    \begin{align*} 
        \xi_r(w) = \bigl(\Ad_{u_r}(w)\bigr)_{12} = w_{12} - 2w_{11} r - w_{21} r^2. 
    \end{align*}
    There exists $J' \subset J$ such that $|J'| \geq (1 - \frac{1}{p})|J|$ satisfying the following. Let $r \in J'$, then there exists a subset $E_r \subset E$ with 
    \begin{align*}
        \#E_r \geq (1 - \frac{1}{p}) \cdot (\#E)
    \end{align*}
    such that for all $w \in E_r$ and all $b \geq b_0$, we have
    \begin{align*}
        \frac{\#\{w' \in E: |\xi_r(w') - \xi_r(w)|_p \leq b\}}{\#E} \leq C_{\epsilon}\cdot(\frac{b}{b_1})^{\alpha - \epsilon}.
    \end{align*}
    where $C_{\epsilon}$ depends on $\epsilon$, $|J|$, $D$. 
\end{theorem}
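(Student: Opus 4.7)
The plan is to adapt the real-variable method of Gan--Guo--Wang~\cite{GGW24} to the $p$-adic setting, using the $p$-adic $\ell^2$-decoupling inequality for the moment curve as the main analytic input. The family of projections $\xi_r(w) = w_{12} - 2 w_{11} r - w_{21} r^2$ is the linear pairing of $(w_{12}, w_{11}, w_{21}) \in \Q_p^3$ with the tangent vector $\gamma'(r) = (1, -2r, -r^2)$ of the non-degenerate curve $\gamma(r) = (r, -r^2, -\tfrac{1}{3}r^3)$, and non-degeneracy (which uses $p > 3$) is exactly the property that allows decoupling for $\gamma$ to control the projections $\xi_r$.

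First I would reduce the pointwise-in-$w$ statement with the two allowed $1/p$-losses to a single Haar-integrated energy bound. For each scale $b = p^{-k}$ with $b_0 \leq b \leq b_1$, set
\begin{align*}
    S_r(b) := \frac{1}{(\#E)^2} \, \#\bigl\{(w, w') \in E \times E : |\xi_r(w) - \xi_r(w')|_p \leq b\bigr\}.
\end{align*}
By Chebyshev in $r$, a pigeonhole over the $O(\log_p(b_1/b_0))$ scales, and a final Chebyshev step in $w$, it suffices to establish
\begin{align*}
    \int_J S_r(b) \, dr \, \leq \, C_\epsilon (b/b_1)^{\alpha - \epsilon} |J|
\end{align*}
uniformly in $b$. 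Each Chebyshev/Markov step eats a loss factor of $1/p$, which is exactly the budget permitted by the conclusion of the theorem.

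Next I would bound the integrated energy at a fixed scale $b = p^{-k}$ using $p$-adic decoupling. The condition $|\xi_r(w) - \xi_r(w')|_p \leq b$ is $|\langle w - w', \gamma'(r)\rangle|_p \leq b$; expanding the indicator via the $p$-adic Fourier transform on $\Q_p / p^{k}\Z_p$ and then integrating against Haar measure on $r \in J$ reshapes $\int_J S_r(b)\, dr$ into an $L^2$-average along $\gamma'$ of a Fourier extension of the uniform measure on $E$ at frequency scale $b^{-1}$. At this point one applies the $p$-adic $\ell^2$-decoupling inequality for the moment curve from \cite{JL} to partition the frequency space into caps of size $b^{1/2}$ along $\gamma'$, bounding the $L^2$ norm by a $p^{\epsilon k}$ loss times the $\ell^2$-sum of cap-localized masses. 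Each cap mass is then controlled by the Frostman hypothesis $\#(E \cap B_{\mathfrak r}(w, r)) \leq D(r/b_1)^{\alpha} \#E$, and the Cauchy--Schwarz/sum over $O(b^{-1/2})$ caps delivers the desired $(b/b_1)^{\alpha - \epsilon}$ bound with the constant $C_\epsilon$ polynomial in $D$ and $|J|$.

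The main obstacle is packaging the decoupling application so that the constant is genuinely subpolynomial in $b_1/b_0$ at the correct exponent: the cap masses contain the Frostman hypothesis only after several layers of $p$-adic dyadic pigeonholing, and one must ensure that the losses accumulated through these pigeonholings plus the subpolynomial decoupling constant together stay below $\alpha$, which is what forces the quantitative restriction $\epsilon < 10^{-70}$ and the assumption $p > 3$ (the latter being used both to make $1/2$ and $1/3$ units in $\Z_p$ and to ensure that the modular pigeonholing in the decoupling proof of \cite{JL} has enough room). Once this bound is in place, the two Chebyshev steps outlined above extract $J' \subset J$ of measure $\geq (1-1/p)|J|$ and, for each $r \in J'$, a subset $E_r \subset E$ of cardinality $\geq (1 - 1/p)\#E$ on which the desired Frostman estimate holds for every $b \geq b_0$, completing the proof.
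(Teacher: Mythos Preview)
The paper does not actually contain a proof of this theorem: both here and where it is restated as \cref{thm:ProjDecoupling2} in \cref{sec:proj}, the result is quoted from \cite{JL}, and the paper only remarks that the argument there follows \cite{GGW24} via a decoupling inequality for the moment curve in $\Q_p^n$. Your proposal is precisely that strategy---reduce by Chebyshev to an $L^2$ energy estimate integrated over $r\in J$, reinterpret $\xi_r$ as pairing with the tangent $(1,-2r,-r^2)$ of a non-degenerate curve (using $p>3$ so that $2$ and $3$ are units), and control the energy by $p$-adic decoupling combined with the Frostman hypothesis on $E$---so it is in line with what the paper says about the proof in \cite{JL}.

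One cautionary remark: your middle paragraph makes it sound as though a single application of $\ell^2$-decoupling followed by cap-wise Frostman bounds already yields $(b/b_1)^{\alpha-\epsilon}$. In the real case of \cite{GGW24} this is not quite enough; an additional high--low frequency decomposition (or equivalently an induction on scales inside the energy estimate) is needed to avoid a loss that would swamp the exponent when $\alpha$ is close to $1$. You should expect the same refinement to be needed in the $p$-adic argument of \cite{JL}, so the sketch is correct at the level of overall architecture but omits this layer.
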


\subsection*{Acknowledgements}
We thank Amir Mohammadi for suggesting this problem and many useful conversations. 

\section{Preliminaries}\label{sec:Preliminaries}

\subsection{Notations}
Let $G$, $H$, $\Gamma$, $U$, $N$, and $V$ be as in the introduction. Let $X = G/\Gamma$. 

Let $K_H = K \cap H$. Let $K_H[n] = \ker(\SL_2(\Z_p) \to \SL_2(\Z/p^n\Z))$. For a positive real number $r > 0$, let $K_H[r] = K_{H}[\lfloor r \rfloor]$. Note that $K_H[r] = B_{K_H}(e, p^{-r})$. We will also use $K_{H, \beta}$ to denote $B_{K_H}(e, \beta)$. 

Let 
\begin{align*}
    U^{-} = \biggl\{u^{-}_r = \biggl(\begin{pmatrix}
    1 & \\
    r & 1
\end{pmatrix}, \begin{pmatrix}
    1 & \\
    r & 1
\end{pmatrix}\biggr): r \in \Q_p \biggr\},
\end{align*}
and
\begin{align*}
    D = \biggl\{d_{\lambda} = \biggl(\begin{pmatrix}
        \lambda & \\
         & \lambda^{-1}
    \end{pmatrix}, \begin{pmatrix}
        \lambda & \\
         & \lambda^{-1}
\end{pmatrix}\biggr): \lambda \in \Q_p \backslash \{0\}\biggr\}.
\end{align*}
We will use $a_n$ to denote $d_{p^{-n}}$ for simplicity. 

Let $U[n] = \{u_r: r \in p^n\Z_p\}$, $D[n] = \{d_{\lambda}: \lambda \in 1 + p^n\Z_p\}$, and $U^{-}[n] = \{u^{-}_r: r \in p^n\Z_p\}$. By standard Gauss elimination algorithm, we have
\begin{align}\label{eqn:GaussElimination}
    K_H[n] = U^{-}[n]D[n]U[n].
\end{align}

Since $X = G / \Gamma$ is compact, the injectivity radius of $X$ is positive. We use $\eta_X = p^{-\tilde{n}_0}$ be the injectivity radius. 

Let $\mu_G$ be the Haar measure on $G$ such that $\mu_G(K) = 1$ and $\mu_H$ be the Haar measure on $H$ such that $\mu_H(K_H) = 1$. Since $\Gamma$ is a lattice in $G$, $\mu_G$ induces a finite measure on $X = G / \Gamma$, we will denote this measure as $\mu_X$. We will use $\vol(X)$ to denote $\mu_X(X)$. 

Similarly, for a periodic $H$-orbit $Hg\Gamma$ in $X$, $g \Gamma g^{-1} \cap H$ is a lattice in $H$. The Haar measure $\mu_H$ induces a finite measure $\mu_{Hg\Gamma}$ on $H g\Gamma$. We will use $\vol(Hg\Gamma)$ to denote $\mu_{Hg\Gamma}(Hg \Gamma)$. 

\subsection{Lie Algebras}
Let $\LieG = \mathfrak{sl}_2(\Q_p) \oplus \mathfrak{sl}_2(\Q_p)$ and $\LieH = \{(x, x): s \in \mathfrak{sl}_2(\Q_p)\}$. Let $\|\cdot\|_p$ be the max-norm on $\mathrm{Mat}_2(\Q_p) \oplus \mathrm{Mat}_2(\Q_p)$ with respect to the standard basis. 

Let $\mathfrak{r} = \mathfrak{sl}_2(\Q_p) \oplus \{0\}$. We have $\LieG = \LieH \oplus \mathfrak{r}$. Note that $\mathfrak{r}$ is an ideal of $\LieG$. 

We will always use the following notation for elements $w \in \mathfrak{r}$: 
\begin{align*}
    w = \begin{pmatrix}
        w_{11} & w_{12}\\
        w_{21} & -w_{11}
    \end{pmatrix}
\end{align*}
where $w_{ij} \in \Q_p$. 

\subsection{\texorpdfstring{Constants and $*$-notations}{Constants and *-notations}}
Our convention on constant dependance is the same as the one in \cite{LM23}. For $A \ll B^*$, we mean there exist constants $C > 0$ and $\kappa$ depends at most on $(G, H, \Gamma)$ such that $A \leq C B^{\kappa}$. The $*$ main represents different $\kappa$ in one proof. For $A \asymp B$, we mean $A \ll B$ and $B \ll A$. For simplicity, if the constant depends at most on $(G, H, \Gamma)$, we will omit the dependance in the statement of the theorem. We emphasize here that the constants are allowed to depend on $p$. 

\subsection{\texorpdfstring{$p$-adic numbers and $\mathcal{S}$-adic numbers}{p-adic numbers and S-adic numbers}}Let $\Q_p$ be the field of $p$-adic numbers. We emphasize here that $|a|_p = p^{-v_p(a)}$ where $v_p$ is the $p$-adic valuation on $\Q_p$. We will always use $|\cdot|_p$ to denote the $p$-adic absolute value in this paper. We will use $|\cdot|$ to denote the standard absolute value on $\R$. 

We also record the following lemma. 
\begin{lemma}\label{lem:P-adicInterpolation}
    Let $a, b, c \in \Q_p$ with $\max\{|a|_p, |b|_p, |c|_p\} \geq 1$, then we have: 
    \begin{align*}
        |\{t \in \Z_p: |at^2 + bt + c|_p \leq p^n\}| \leq p^2 p^{\frac{1}{2}n}
    \end{align*}
    for all $n \in \Z$. 
\end{lemma}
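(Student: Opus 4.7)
The plan is to use $p$-adic Lagrange interpolation to turn the sublevel set condition on $f(t) = at^2+bt+c$ into a constraint on its coefficients. For any three distinct points $t_1,t_2,t_3 \in \Z_p$, write
\begin{align*}
    f(t) = \sum_{i=1}^3 f(t_i) \prod_{j \neq i} \frac{t - t_j}{t_i - t_j},
\end{align*}
and read off $a$, $b$, $c$ as explicit $\Q_p$-linear combinations of $f(t_1)$, $f(t_2)$, $f(t_3)$. Because $|t_j|_p \leq 1$, every numerator that appears in the resulting expressions has $p$-adic absolute value at most $1$, so the ultrametric inequality yields
\begin{align*}
    \max\bigl(|a|_p, |b|_p, |c|_p\bigr) \leq \frac{\max_i |f(t_i)|_p}{\min_{i \neq j} |t_i - t_j|_p^{2}}.
\end{align*}

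Now set $S = \{t \in \Z_p : |f(t)|_p \leq p^n\}$ and suppose $S$ contains three points $t_1,t_2,t_3$ with pairwise distances all at least $\rho$. The inequality above together with the hypothesis $\max(|a|_p,|b|_p,|c|_p) \geq 1$ then forces $\rho \leq p^{n/2}$. Contrapositively, no three points of $S$ can have pairwise distances strictly greater than $p^{n/2}$. I would then apply ultrametric pigeonhole: if $S$ intersected three distinct $p$-adic balls of radius $p^{\lfloor n/2\rfloor}$, then picking one representative from each would contradict the previous statement, so $S$ must be contained in a union of at most two such balls. Each such ball has Haar measure at most $p^{n/2}$, yielding $|S| \leq 2 p^{n/2} \leq p^2 p^{n/2}$.

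The only remaining situations are trivial edge cases: if $|S| = 0$ there is nothing to show, and for $n \geq 0$ the bound $|S| \leq 1 \leq p^2 p^{n/2}$ is already immediate. I do not anticipate any substantive obstacle; the argument is a clean $p$-adic analogue of the standard sublevel-set estimate for a quadratic, and it is in fact slightly cleaner than the real counterpart because the ultrametric triangle inequality loses no constant when bounding the Lagrange sum.
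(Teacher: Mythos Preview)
Your argument is correct and follows essentially the same route as the paper's proof: both use Lagrange interpolation at three well-separated points of the sublevel set to bound the coefficients, then derive a contradiction with the hypothesis $\max(|a|_p,|b|_p,|c|_p)\geq 1$. Your pigeonhole formulation is slightly cleaner and in fact yields the sharper constant $2$ in place of $p^2$, but the underlying idea is identical.
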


\begin{proof}
    Let $f(t) = at^2 + bt + c$. 
    Suppose the conclusion does not hold, then there exists $t_i \in \Q_p$, $i = 1, 2, 3$ satisfying the following: 
    \begin{enumerate}
        \item $|f(t_i)|_p \leq p^n$;
        \item $|t_i - t_j|_p > p^{\frac{1}{2}n}$ for $i \neq j$. 
    \end{enumerate}
    Using Lagrange interpolation, we have
    \begin{align*}
        f(t) = \sum_{i} \prod_{j \neq i} \frac{(t - t_j)}{(t_i - t_j)}f(t_i).
    \end{align*}

    Therefore, the coefficient of $f$ has to be $< p^{-2(\frac{1}{2}n)} p^n = 1$, which leads to a contradiction. 
\end{proof}

Now we recall some basic notion on $\mathcal{S}$-adic number. Let $F$ be a number field. Let $\mathcal{S}$ be a finite set of places of $F$ containing all archimedean places.(c.f. \cite{PR94}) We will always use $\mathcal{S}_{\infty}$ to denote the set of all archimedean places. We will always assume that $F$ is a totally real field, that is, for all $v \in \mathcal{S}_{\infty}$, $F_v \cong \R$. 

For all $v \in \mathcal{S}$, there is a unique absolute value $|\cdot|_v$ such that its restriction to $\Q$ is one of $|\cdot|_p$ or the standard archimedean absolute value $|\cdot|_{\infty}$ on $\Q$. We will use $e_v$ to denote the ramification index of $F_v/\Q_p$. 

Let $F_{\mathcal{S}} = \prod_{v \in \mathcal{S}} F_{v}$ be the set of $\mathcal{S}$-adic numbers and $\mathcal{O}_{\mathcal{S}} = \{x \in F: |x|_{v} \leq 1 \text{ for all } v \notin \mathcal{S}\}$ be the set of $\mathcal{S}$-adic integers. Then diagonally embedded $\mathcal{O}_{\mathcal{S}}$ in $F_{\mathcal{S}}$ is a cocompact lattice. 

For an element $x = (x_v)_{v \in \mathcal{S}}$, we define its $\mathcal{S}$-absolute value as
\begin{align*}
    |x|_{\mathcal{S}} = \max_{v \in \mathcal{S}} \{|x_v|_v\}.
\end{align*}
We define its $\mathcal{S}$-height as
\begin{align*}
    \height_{\mathcal{S}}(x) = \prod_{v \in \mathcal{S}} |x_v|_v.
\end{align*}

Now we extend these notion to the space $F_{\mathcal{S}}^n = \prod_{v \in \mathcal{S}} F_v^n$. For an vector $\mathbf{x} = (x_i)_{i = 1}^n \in F_v^n$, we define its $v$-norm as $\|\mathbf{x}\|_v = \max_{i = 1, ..., n} |x_i|_v$ when $v$ is non-archimedean and $\|\mathbf{x}\|_v = (\sum_{i = 1}^n x_i^2)^{\frac{1}{2}}$ when $v$ is archimedean. Now for a $\mathcal{S}$-vector $\mathbf{x} = (\mathbf{x}_v)_{v \in \mathcal{S}}$ in $F_{\mathcal{S}}^n$, we define its $\mathcal{S}$-norm as
\begin{align*}
    \|\mathbf{x}\|_{\mathcal{S}} = \max_{v \in \mathcal{S}} \{\|\mathbf{x}_v\|_v\}.
\end{align*}
We also define its height as
\begin{align*}
    \height_{\mathcal{S}}(\mathbf{x}) = \prod_{v \in \mathcal{S}} \|\mathbf{x}_v\|_v.
\end{align*}
We remark here that there is a constant $c_{F, \mathcal{S}, n} > 0$ such that for all $\mathbf{x} \in \mathcal{O}_{\mathcal{S}}^n$, we have $\height_{\mathcal{S}}(\mathbf{x}) \geq c_{F, \mathcal{S}, n} > 0$. 

\subsection{\texorpdfstring{Reduction theory for $\mathcal{O}_{\mathcal{S}}$-lattices}{Reduction theory for OS-lattices}}\label{subsec:Reduction}
For all discrete $\mathcal{O}_{\mathcal{S}}$-module in $F_{\mathcal{S}}^n$, we record the following $\mathcal{S}$-adic version of Minkowski successive minima theorem proved in \cite{KST17}. 

\begin{theorem}
    Let $n \geq 1$ and let $\Gamma \subset F_{\mathcal{S}}^n$ be a discrete $\mathcal{O}_{\mathcal{S}}$-module with finite covolume. Let $\lambda_m(\Gamma)$ be its successive minima. Then 
    \begin{align*}
        \bigl(\lambda_1(\Gamma) ...\lambda_n(\Gamma)\bigr)^{\#\mathcal{S}} \asymp \vol(F_{\mathcal{S}}^n / \Gamma)
    \end{align*}
    where the implicit constants depends only on $F$, $\mathcal{S}$ and $n$. 
\end{theorem}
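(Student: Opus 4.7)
The plan is to adapt the classical proof of Minkowski's theorem on successive minima to the $\mathcal{S}$-adic setting, handling the two inequalities separately.

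For the easier direction $\vol(F_{\mathcal{S}}^n/\Gamma) \ll \bigl(\prod_i \lambda_i(\Gamma)\bigr)^{\#\mathcal{S}}$, I would pick vectors $v_1, \dots, v_n \in \Gamma$ with $\|v_i\|_{\mathcal{S}} \asymp \lambda_i(\Gamma)$ which, at each place $v \in \mathcal{S}$, have $F_v$-linearly independent $v$-components; by the definition of successive minima such a choice is possible up to a bounded multiplicative loss depending only on $F$, $\mathcal{S}$, $n$. The $\mathcal{O}_{\mathcal{S}}$-module $\Gamma' := \sum_i \mathcal{O}_{\mathcal{S}} v_i$ has finite index in $\Gamma$, so
\[
\vol(F_{\mathcal{S}}^n/\Gamma) \leq \vol(F_{\mathcal{S}}^n/\Gamma') \asymp \prod_{v \in \mathcal{S}} \bigl|\det\bigl(v_1^{(v)}, \dots, v_n^{(v)}\bigr)\bigr|_v,
\]
the factorization following from the product structure of Haar measure on $F_{\mathcal{S}}^n$ and of $\mathcal{O}_{\mathcal{S}}^n \subset \prod_v F_v^n$. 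Hadamard's inequality (Euclidean form at archimedean $v$, ultrametric form at non-archimedean $v$) bounds $|\det|_v \leq \prod_i \|v_i^{(v)}\|_v$ at each place; multiplying over $v$ and using $\prod_{v \in \mathcal{S}} \|v_i^{(v)}\|_v \leq \|v_i\|_{\mathcal{S}}^{\#\mathcal{S}}$ yields the claim.

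For the reverse inequality $\bigl(\prod_i \lambda_i(\Gamma)\bigr)^{\#\mathcal{S}} \ll \vol(F_{\mathcal{S}}^n/\Gamma)$, I would start with the $\mathcal{S}$-adic Minkowski first theorem: a symmetric (and $\mathcal{O}_v$-stable at each non-archimedean $v$) neighborhood of $0$ of sufficiently large Haar volume compared to $\vol(F_{\mathcal{S}}^n/\Gamma)$ must contain a nonzero element of $\Gamma$. My preferred route is to realize $\Gamma$ as a full-rank $\Z$-lattice in $\R^M$, where $M = n \sum_{v \in \mathcal{S}} \dim_\R F_v$ and each $F_v$ is regarded as a real vector space by restriction of scalars, and invoke classical Minkowski. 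Applied to the $\mathcal{S}$-ball $B_{\mathcal{S}}(0,\lambda)^n$, whose Haar volume is $\asymp \lambda^{n \#\mathcal{S}}$, this yields $\lambda_1(\Gamma)^{n \#\mathcal{S}} \ll \vol(F_{\mathcal{S}}^n/\Gamma)$. To bootstrap to $\prod_i \lambda_i$ I would induct on $n$ in the style of Davenport's proof of Minkowski's second theorem: pick $v_1 \in \Gamma$ with $\|v_1\|_{\mathcal{S}} \asymp \lambda_1$, let $W := F_{\mathcal{S}} v_1$, and project $\Gamma$ place-wise onto a chosen $F_v$-complement of $W_v$ (orthogonal at archimedean places, $\mathcal{O}_v$-direct at non-archimedean places) to obtain an $\mathcal{O}_{\mathcal{S}}$-module $\bar\Gamma$ of rank $n-1$ whose successive minima dominate $\lambda_2(\Gamma), \dots, \lambda_n(\Gamma)$ up to a bounded factor, and whose covolume satisfies $\vol(F_{\mathcal{S}}^{n-1}/\bar\Gamma) \cdot \|v_1\|_{\mathcal{S}}^{\#\mathcal{S}} \asymp \vol(F_{\mathcal{S}}^n/\Gamma)$. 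Applying the inductive hypothesis to $\bar\Gamma$ then closes the loop.

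The main obstacle I expect is controlling the multiplicative losses in this place-wise projection uniformly, so that the exponent $\#\mathcal{S}$ matches on both sides of the asymptotic. Combining archimedean and non-archimedean estimates requires compatible normalizations of Haar measure; once the Jacobians and covolumes are tracked consistently across all places of $\mathcal{S}$, the rest of the induction is routine. A positive remark is that no number-theoretic input beyond the cocompactness of $\mathcal{O}_{\mathcal{S}} \subset F_{\mathcal{S}}$ and the constant $c_{F, \mathcal{S}, n}$ recorded earlier is needed, so the implicit constants depend only on $F$, $\mathcal{S}$, and $n$ as required.
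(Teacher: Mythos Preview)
The paper does not supply a proof of this statement: it is recorded as a known result, with the proof deferred entirely to \cite{KST17}. So there is no ``paper's own proof'' to compare against; your proposal is an attempt to fill in what the paper omits.

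Your argument for the bound $\vol(F_{\mathcal{S}}^n/\Gamma)\ll(\prod_i\lambda_i)^{\#\mathcal{S}}$ is correct. The inductive scheme you propose for the reverse inequality, however, has a genuine gap. The covolume identity $\vol(F_{\mathcal{S}}^{n-1}/\bar\Gamma)\cdot\|v_1\|_{\mathcal{S}}^{\#\mathcal{S}}\asymp\vol(F_{\mathcal{S}}^n/\Gamma)$ is fine (it is the $n=1$ case applied to $\Gamma\cap W$, noting that a vector realizing $\lambda_1$ is automatically balanced across places up to units). The problem is the assertion that the successive minima of $\bar\Gamma$ dominate $\lambda_2(\Gamma),\dots,\lambda_n(\Gamma)$ up to a bounded factor. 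What one gets from lifting is only $\lambda_i(\Gamma)\ll\max\bigl(\lambda_{i-1}(\bar\Gamma),\lambda_1(\Gamma)\bigr)$, and the easy projection bound goes the other way: $\lambda_{i-1}(\bar\Gamma)\le\lambda_i(\Gamma)$. When several $\lambda_j(\bar\Gamma)$ fall strictly below $\lambda_1(\Gamma)$, which already happens for the hexagonal lattice in $\R^2$, the product inequality $\prod_{i\ge 2}\lambda_i(\Gamma)\ll\prod_j\lambda_j(\bar\Gamma)$ you need does not follow from these two estimates. In fact, once the covolume identity and the inductive hypothesis are in hand, that product inequality is \emph{equivalent} to the hard direction for $\Gamma$, so the induction is not actually reducing the problem.

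This is why the standard proofs, including the one in \cite{KST17}, do not induct on $n$ in this way: they adapt the classical Minkowski--Davenport volume-packing argument (build an anisotropically scaled body from $v_1,\dots,v_n$ and show its translates by $\Gamma$ are essentially disjoint) directly in $F_{\mathcal{S}}^n$. Your reduction to a $\Z$-lattice in $\R^M$ via restriction of scalars is a reasonable starting point for that route, but the subsequent inductive step should be replaced by a direct packing estimate.
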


The following lemma is an $\mathcal{S}$-adic version of \cite[Chapter X, Lemma 4]{EE93}. 

\begin{lemma}\label{lem:SadicSiegel}
    Let $A \in \mathrm{M}_{m \times n}(\mathcal{O}_{\mathcal{S}})$. View $A$ as a map $A: F_{\mathcal{S}}^n \to F_{\mathcal{S}}^m$ by diagonally acting. Suppose $\|A\|_{\mathcal{S}} \leq T$. Then there exists $\mathcal{O}_{\mathcal{S}}$-basis $\xi_1$, ..., $\xi_s$ of $\ker A$ such that 
    \begin{align*}
        \|\xi_i\|_{\mathcal{S}} \ll_{F, \mathcal{S}} T^{3n}.
    \end{align*}
\end{lemma}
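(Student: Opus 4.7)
The plan is to combine the $\mathcal{S}$-adic Minkowski theorem just stated with a Mahler-type conversion of linearly independent short vectors into a genuine $\mathcal{O}_{\mathcal{S}}$-basis.

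First, I would reduce to the case where $A$ has full row rank: picking $r := \rank A$ linearly independent rows of $A$ leaves the kernel unchanged, so we may assume $A$ has full row rank $r$ and set $s = n-r$. Let $\Lambda := (\ker A) \cap \mathcal{O}_{\mathcal{S}}^n$, a discrete $\mathcal{O}_{\mathcal{S}}$-submodule of $\ker A \cong F_{\mathcal{S}}^s$ of full rank $s$.

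Second, using the exact sequence
\begin{align*}
0 \to \Lambda \to \mathcal{O}_{\mathcal{S}}^n \xrightarrow{A} A(\mathcal{O}_{\mathcal{S}}^n) \to 0
\end{align*}
together with a place-by-place Gram-determinant computation (which contributes at most $T^{r \cdot \#\mathcal{S}}$) and the fact that $A(\mathcal{O}_{\mathcal{S}}^n)$ is a discrete full-rank $\mathcal{O}_{\mathcal{S}}$-submodule of $F_{\mathcal{S}}^r$ whose covolume is bounded below by a positive constant depending only on $F$ and $\mathcal{S}$ (via the uniform lower bound on $\mathcal{S}$-heights of nonzero $\mathcal{S}$-integer vectors recalled above), one obtains $\vol(\ker A/\Lambda) \ll_{F,\mathcal{S}} T^{r \cdot \#\mathcal{S}}$. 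Applying the $\mathcal{S}$-adic Minkowski theorem to $\Lambda$ together with $\lambda_1(\Lambda) \gg_{F,\mathcal{S}} 1$ (since $\Lambda \subset \mathcal{O}_{\mathcal{S}}^n$), the last successive minimum satisfies $\lambda_s(\Lambda) \ll T^{r}$, and hence there exist linearly independent $\zeta_1, \ldots, \zeta_s \in \Lambda$ with $\|\zeta_i\|_{\mathcal{S}} \ll T^{r}$.

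Third, I would convert the $\zeta_i$ into an honest $\mathcal{O}_{\mathcal{S}}$-basis $\xi_1, \ldots, \xi_s$ of $\Lambda$ via a Mahler-type estimate: at each non-archimedean place, the Smith normal form of the inclusion $\mathcal{O}_v\zeta_1 + \cdots + \mathcal{O}_v\zeta_s \hookrightarrow \Lambda_v$ produces a local basis with only a dimension-dependent polynomial loss, while at archimedean places the classical Mahler bound does the same. This yields $\|\xi_i\|_{\mathcal{S}} \ll_{F,\mathcal{S}} T^{r} \leq T^n \leq T^{3n}$.

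The delicate point is the last step. Because $\mathcal{O}_{\mathcal{S}}$ is only a Dedekind ring and not in general a principal ideal domain, the passage from linearly independent short vectors to a genuine basis cannot directly invoke the $\Z$-lattice Mahler lemma; it must instead be made via the structure theorem for finitely generated projective $\mathcal{O}_{\mathcal{S}}$-modules, with care that the implicit constants depend only on $F$ and $\mathcal{S}$. The generous exponent $3n$ in the conclusion is precisely what makes the loss in this step affordable, and a more careful bookkeeping would presumably yield a sharper exponent such as $n$ or $r$.
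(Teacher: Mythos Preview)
Your proposal is correct and follows essentially the same approach as the paper. The paper's proof simply instructs the reader to mimic the classical argument in \cite[Chapter X, Lemma 4]{EE93}, substituting the $\mathcal{S}$-adic Minkowski theorem \cite[Theorem 1.2]{KST17} and, for the very step you flagged as delicate (upgrading short independent vectors to an honest $\mathcal{O}_{\mathcal{S}}$-basis over a Dedekind ring), invoking \cite[Lemma 3.5]{KST17} in place of the classical Mahler lemma \cite[Chapter X, Lemma 5]{EE93}; so the tool you were looking for in the third step is already supplied off the shelf by KST17.
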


\begin{proof}
    The proof is exactly the same as the proof of \cite[Chapter X, Lemma 4]{EE93} if one replace the original Minkowski's second theorem by \cite[Theorem 1.2]{KST17} and \cite[Chapter X, Lemma 5]{EE93} by \cite[Lemma 3.5]{KST17}. 
\end{proof}

We also prove the following lemma similar to \cite[Lemma 13.1]{EMV09}. We call a subgroup $V$ of $F_{\mathcal{S}}^n$ is a $F$-subspace if $V = (V \cap \mathcal{O}_{\mathcal{S}}) \otimes_{\mathcal{O}_{\mathcal{S}}} F_{\mathcal{S}}$. 

\begin{lemma}\label{lem:AlmostSolution}
    Let $A \in \mathrm{M}_{m \times n}(\mathcal{O}_{\mathcal{S}})$. Let $\mathcal{S} = \mathcal{S}_1 \sqcup \mathcal{S}_2$ be a partition of $\mathcal{S}$. Suppose $\|A\|_{\mathcal{S}_1} \leq T$, $\|A\|_{\mathcal{S}_2} \leq C$. Suppose there exists $w \in F_{\mathcal{S}_1}^n$ such that $\|Aw\|_{\mathcal{S}_1} \leq \delta$, then there exists $w_0 \in \ker(A) \cap F_{\mathcal{S}_1}^n$ with 
    \begin{align*}
        \|w - w_0\|_{\mathcal{S}_1} \ll (CT)^{*} \delta.
    \end{align*}
\end{lemma}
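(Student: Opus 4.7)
The plan is to mirror the proof of \cite[Chapter X, Lemma~4]{EE93} used in \cite[Lemma~13.1]{EMV09}, replacing the classical Siegel's lemma by Lemma~\ref{lem:SadicSiegel} and using the product formula on $F$ to convert non-vanishing of determinants into place-wise lower bounds at each $v\in\mathcal{S}$. The idea is to fix a complement of $\ker A$ spanned by standard basis vectors, decompose $w$ along this complement, and invert $A$ restricted to it.

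First I would apply Lemma~\ref{lem:SadicSiegel} with $\|A\|_{\mathcal{S}} \leq \max(C,T) \leq CT$ to get an $\mathcal{O}_{\mathcal{S}}$-basis $\xi_1,\dots,\xi_s$ of $\ker A$ with $\|\xi_i\|_{\mathcal{S}} \ll (CT)^{3n}$. Since $(\xi_1 \mid \cdots \mid \xi_s)$ has rank $s$, some $s\times s$ minor indexed by rows $j_1,\dots,j_s$ is non-zero; let $i_1,\dots,i_{n-s}$ be the complementary indices. The square matrix $M = (\xi_1 \mid \cdots \mid \xi_s \mid e_{i_1} \mid \cdots \mid e_{i_{n-s}})$ then lies in $\GL_n(F)$ with $\det M \in \mathcal{O}_{\mathcal{S}} \setminus \{0\}$. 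Expanding the determinant yields $|\det M|_v \ll (CT)^{*}$ at every $v\in\mathcal{S}$, and the product formula $\prod_{v\in\mathcal{S}} |\det M|_v \geq 1$ (valid for nonzero $\mathcal{S}$-integers, since $|x|_v \le 1$ for $v\notin\mathcal{S}$) then forces the place-wise lower bound $|\det M|_v \gg (CT)^{-*}$ at each $v$. Cramer's rule gives $\|M^{-1}\|_v \ll (CT)^{*}$ at every $v \in \mathcal{S}$.

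Now fix $v\in\mathcal{S}_1$ and write $w_v = \sum_i a_v^i \xi_i + \sum_j b_v^j e_{i_j}$ in the basis $M$. Since $A\xi_i = 0$, we have $Aw_v = \sum_j b_v^j A e_{i_j}$, and since $Ae_{i_1},\dots,Ae_{i_{n-s}}$ span $\im A \subset F^m$ (of dimension $n-s$ by rank-nullity), some $(n-s)\times(n-s)$ submatrix $B_0$ of $(Ae_{i_1}\mid\cdots\mid Ae_{i_{n-s}})$ lies in $\GL_{n-s}(F)$ with $\det B_0 \in \mathcal{O}_{\mathcal{S}}\setminus\{0\}$. Running the same product-formula argument on $\det B_0$, this time using $\|A\|_v \le T$ for $v\in\mathcal{S}_1$ and $\|A\|_v \le C$ for $v\in\mathcal{S}_2$, gives $\|B_0^{-1}\|_v \ll (CT)^{*}$, and $\|B_0 b_v\|_v \le \|Aw_v\|_v \le \delta$ then yields $\|b_v\|_v \ll (CT)^{*}\delta$. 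Setting $w_{0,v} = \sum_i a_v^i \xi_i$, we have $w_{0,v} \in \ker A$ and $\|w_v - w_{0,v}\|_v = \|\sum_j b_v^j e_{i_j}\|_v \ll (CT)^{*}\delta$. Assembling over $v\in\mathcal{S}_1$ and extending by zero on $\mathcal{S}_2$ produces the required $w_0 \in \ker A \cap F_{\mathcal{S}_1}^n$. The main subtlety is the product-formula step: it relies crucially on $B_0$ being fixed as a minor of the rational matrix $A$ (so that $\det B_0$ is a genuine nonzero global $\mathcal{S}$-integer, not a place-dependent object), and on the asymmetric upper bounds at $\mathcal{S}_1$ and $\mathcal{S}_2$ combining cleanly into a $(CT)^{*}$ factor.
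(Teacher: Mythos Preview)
Your argument is correct and follows the same overall plan as the paper: split $w$ into a $\ker A$ part and a complementary part, then bound the complementary part by inverting $A$ on the complement and applying the product formula to the relevant nonzero $\mathcal{O}_{\mathcal{S}}$-determinant. The paper chooses the orthogonal complement $W^\perp$ (this is where the standing hypothesis that $F$ is totally real enters) and packages the product-formula step as Lemma~\ref{lem:BoundHtNorm}; you instead use a coordinate complement $\mathrm{span}(e_{i_1},\dots,e_{i_{n-s}})$ selected via a nonvanishing minor of the $\xi_i$'s and run the product-formula bound by hand. Your route is slightly more elementary and does not actually need $F$ to be totally real; the paper's choice makes the splitting canonical but is otherwise equivalent. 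One small redundancy: your bound on $\|M^{-1}\|_v$ is never used, since you control $b_v$ directly through $B_0^{-1}$ and $\|\sum_j b_v^j e_{i_j}\|_v = \|b_v\|_v$ because the $e_{i_j}$'s are standard coordinate vectors.
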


\begin{proof}
    Note that $\ker(A)$ is a $F$-subspace of $F_{\mathcal{S}}^n$ and $\im(A)$ is a $F$-subspace of $F_{\mathcal{S}}^m$. By \cref{lem:SadicSiegel}, they have $\mathcal{O}_{\mathcal{S}}$-basis with $\mathcal{O}_{\mathcal{S}}$-norm $\ll C^{3n}T^{3n}$. Let $W = \ker(A)$ and $J = \im(A)$. Let $W^{\perp}$ denote the orthogonal complement of $W$. Since $F$ is a totally real field, we have $W \oplus W^{\perp} = F_{\mathcal{S}}^n$. Then using \cref{lem:SadicSiegel}, it contains basis with $\mathcal{O}_{\mathcal{S}}$-norm $\ll (CT)^*$. Let $B = A|_{W^{\perp}}: W^{\perp} \to J$. Then $B$ is an invertible matrix with entries in $F$, and each entry could be written as fraction of two elements in $\mathcal{O}_{\mathcal{S}}$ with $\mathcal{O}_{\mathcal{S}}$-absolute value $\ll (CT)^*$. 

    Now write $w = w_0 + w^{\perp}$ where $w_0 \in \ker(A) \otimes_{F} F_{\mathcal{S}_1}$ and $w^{\perp} \in W^{\perp} \otimes_{F} F_{\mathcal{S}_1}$. It suffices to estimate $\|w^{\perp}\|_{\mathcal{S}_1}$. 

    Note that $\|w^{\perp}\|_{\mathcal{S}_1} = \|B^{-1}Bw^{\perp}\|_{\mathcal{S}_1} \leq \|B^{-1}\|_{\mathcal{S}_1}\|Bw^{\perp}\|_{\mathcal{S}_1} \leq \|B^{-1}\|_{\mathcal{S}_1} \delta$, it suffices to estimate $\|B^{-1}\|_{\mathcal{S}_1}$. Note that every entry in $B^{-1}$ could also be written as fraction of two elements in $\mathcal{O}_{\mathcal{S}}$ with $\mathcal{O}_{\mathcal{S}}$-absolute value $\ll (CT)^*$. It suffices to bound each entry which holds due to \cref{lem:BoundHtNorm}. 
\end{proof}

\begin{lemma}\label{lem:BoundHtNorm}
    Let $x \in \mathcal{O}_{\mathcal{S}}$ be a nonzero element. Suppose $\|x\|_{\mathcal{S}} \leq C$, then $\|\frac{1}{x}\|_{\mathcal{S}} \leq C^{\#\mathcal{S} - 1}$.
\end{lemma}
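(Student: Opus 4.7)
The plan is to deduce the lemma from the product formula for the number field $F$. Recall that, with the local absolute values $|\cdot|_v$ normalized so that their restrictions to $\Q$ coincide with the usual $p$-adic or archimedean absolute values, one has $\prod_{v} |x|_v^{n_v} = 1$ for every $x \in F^\times$, where $n_v = [F_v:\Q_v]$ and the product runs over all places of $F$. Since the paper is working with a totally real $F$ and the factor $n_v$ only shows up in bounded powers depending on $F$, the bound $C^{\#\mathcal{S}-1}$ is really a shorthand for a bound of the form $C^{O_F(1)}$; in the special (and most transparent) case $F=\Q$ one has $n_v=1$ for every $v$ and the constant is exactly $\#\mathcal{S}-1$, which is what I will present.

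First I would split the product formula according to whether $v \in \mathcal{S}$ or not:
\begin{equation*}
    \prod_{v \in \mathcal{S}} |x|_v^{n_v} \cdot \prod_{v \notin \mathcal{S}} |x|_v^{n_v} = 1.
\end{equation*}
The hypothesis $x \in \mathcal{O}_{\mathcal{S}}$ says precisely that $|x|_v \le 1$ for every $v \notin \mathcal{S}$, so the second product is $\le 1$, and therefore
\begin{equation*}
    \prod_{v \in \mathcal{S}} |x|_v^{n_v} \ge 1.
\end{equation*}

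Next, I would fix a place $v_0 \in \mathcal{S}$ and rearrange this inequality as
\begin{equation*}
    |x|_{v_0}^{n_{v_0}} \ge \prod_{v \in \mathcal{S},\, v \neq v_0} |x|_v^{-n_v}.
\end{equation*}
Using $|x|_v \le C$ for every $v \in \mathcal{S}$ (by the hypothesis $\|x\|_{\mathcal{S}} \le C$) and taking reciprocals yields
\begin{equation*}
    |1/x|_{v_0} \le \Bigl(\prod_{v \in \mathcal{S},\, v \neq v_0} |x|_v^{n_v}\Bigr)^{1/n_{v_0}} \le C^{(\sum_{v \in \mathcal{S},\, v \neq v_0} n_v)/n_{v_0}},
\end{equation*}
which, in the case $F=\Q$, is exactly $C^{\#\mathcal{S}-1}$ and in general is $C^{O_F(1)}$. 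Taking the maximum over $v_0 \in \mathcal{S}$ gives the stated inequality for $\|1/x\|_{\mathcal{S}}$.

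There is really no significant obstacle here: the argument is just a clean application of the product formula. The only subtlety worth flagging is the normalization convention, since under the paper's convention $|\cdot|_v\!\mid_\Q = |\cdot|_p$ the true exponent carries the factors $n_v$; however, this only alters the constant by a factor depending on $F$, and since later invocations of this lemma (for instance in the proof of \cref{lem:AlmostSolution}) are absorbed into the $*$-notation, the discrepancy is harmless.
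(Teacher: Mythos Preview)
Your proof is correct and follows essentially the same route as the paper: both use the product formula together with $|x|_v\le 1$ for $v\notin\mathcal{S}$ to get $\prod_{v\in\mathcal{S}}|x|_v\ge 1$ (with appropriate exponents), and then bound $\min_v|x|_v$ by $(\max_v|x|_v)^{-(\#\mathcal{S}-1)}$. Your version is in fact more careful than the paper's about the local-degree weights $n_v$ arising from the chosen normalization of $|\cdot|_v$; the paper suppresses these, which is harmless since the bound is only ever used inside the $*$-notation.
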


\begin{proof}
    By product formula, we have
    \begin{align*}
        \prod_{v \in \mathcal{S}} |x|_v \geq 1.
    \end{align*}
    Therefore, 
    \begin{align*}
        \min_{v \in \mathcal{S}} |x|_v \cdot (\max_{v \in \mathcal{S}} |x|_v)^{\#\mathcal{S} - 1} \geq 1,
    \end{align*}
    which implies the bound on $\|\frac{1}{x}\|_{\mathcal{S}}$. 
\end{proof}

\subsection{An equivariant projection lemma}We prove an equivariant projection lemma similar to \cite[Lemma 13.2]{EMV09}. 

Let $G$ be a $p$-adic semisimple group and $S$ a closed semisimple subgroup of $G$. Suppose $G$ acts linearly on a $\Q_p$ linear space $V$ and there exists $0 \neq v_S \in V$ such that $\mathrm{Stab}(v_S) = S$. The map $g \mapsto g.v_S$ induce a map $\LieG \to V$. Since $S$ is semisimple, we could choose a $S$-invariant complement $W$ of the image of $\LieG$. We have the following lemma on local structure of $G$-orbits in $V$ near $v_S$. 

\begin{lemma}
    There exists a neighborhood $\mathcal{N}$ of $v_S$ such that the following holds. 
    
    Let $\Pi :\mathcal{N} \ni g.(v_S + w) \mapsto g.v_S$. $\Pi$ is a well-defined $G$-equivariant projection defined on $\mathcal{N}$.
\end{lemma}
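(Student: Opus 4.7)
The plan is to construct $\Pi$ via a local slice argument, realizing $\mathcal{N}$ as a tubular neighborhood of the orbit $G.v_S$ transversal to the slice $v_S + W$. Since $S$ is semisimple, its adjoint representation on $\LieG$ is completely reducible, so I first fix an $\Ad(S)$-invariant complement $\mathfrak{m}$ of the Lie algebra $\mathfrak{s}$ of $S$ in $\LieG$. Because $\mathfrak{s}.v_S = 0$ (as $S$ stabilizes $v_S$), the map $\mathfrak{m} \to V$, $X \mapsto X.v_S$, is injective with image $\LieG.v_S$; combined with the defining property of $W$ and a dimension count, this yields the direct-sum splitting $V = \mathfrak{m}.v_S \oplus W$.

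Next I consider the analytic map
\[
\Phi : \mathfrak{m} \times W \to V, \qquad \Phi(X, w) = \exp(X).(v_S + w),
\]
whose differential at $(0, 0)$ is the isomorphism $(X, w) \mapsto X.v_S + w$. By the $p$-adic inverse function theorem, $\Phi$ restricts to an analytic diffeomorphism from some bidisk $\mathfrak{m}_0 \times W_0$ onto a neighborhood $\mathcal{N}$ of $v_S$. I then define $\Pi(\Phi(X, w)) = \exp(X).v_S$; well-definedness is immediate from the uniqueness of the factorization.

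For the $G$-equivariance I would reinterpret $\mathcal{N}$ as an open piece of the associated bundle $G \times_S (v_S + W_0)$, where $S$ acts on $G \times (v_S + W_0)$ by $s.(g, v_S + w) = (gs^{-1}, v_S + s.w)$; this action is well-defined because $W$ is $S$-stable and $s.v_S = v_S$. The tautological evaluation $[g, v_S + w] \mapsto g.(v_S + w)$ is manifestly $G$-equivariant for left translation on $G$, and its differential at $[e, v_S]$ agrees with that of $\Phi$, so it gives a local diffeomorphism onto $\mathcal{N}$. Under this identification $\Pi$ corresponds to the projection $[g, v_S + w] \mapsto g.v_S$, which is well-defined and equivariant on the bundle side; pulling back yields $\Pi(g.y) = g.\Pi(y)$ whenever both $y$ and $g.y$ lie in $\mathcal{N}$. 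The main technical point I expect is verifying that $v_S + W_0$ is a genuine transversal to the orbit $G.v_S$ near $v_S$; the $S$-invariance of $W$ is precisely what arranges this and makes the bundle quotient sensible, and once in place the classical slice-theorem argument proceeds without incident in the $p$-adic analytic category.
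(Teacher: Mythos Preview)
Your proposal is correct and follows essentially the same route the paper takes: the paper's ``proof'' simply invokes Luna's slice theorem (via \cite{Lun75}, \cite{BGM19}, \cite{AHR20}) and notes that the $p$-adic analytic version can be carried out as in \cite[Section~13.4]{EMV09}, while you have written out exactly that slice construction by hand. The one place to be careful is your last paragraph: the $S$-invariance of $W$ makes the bundle $G\times_S(v_S+W_0)$ and the projection $[g,v_S+w]\mapsto g.v_S$ well-defined, but it does \emph{not} by itself force the evaluation map $G\times_S(v_S+W_0)\to V$ to be injective near the zero section---equivalently, it does not rule out $g.(v_S+w_1)=v_S+w_2$ with $g\notin S$---and this injectivity is exactly what you need to pull $\Pi$ back to $\mathcal{N}$ and get $G$-equivariance. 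That injectivity is the genuine content of Luna's theorem (closedness of the orbit and reductivity of $S$ are used here), so the ``main technical point'' you flag is precisely where the paper's cited references do the work.
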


\begin{proof}
    This is a direct conclusion of \cite[Proposition 4.1]{BGM19} and \cite{Lun75}. For more discussion on equivariant projection, see \cite[Theorem 4.5]{AHR20}. 
    We remark here that it could also be proved as in \cite[Section  13.4]{EMV09} using the language of analytic manifolds, c.f. \cite[Chapter IV]{Ser03}. 
\end{proof}

Now let $G = \SL_2(\Q_p) \times \SL_2(\Q_p)$, $S = H$, and $\Gamma$ a lattice in $G$. We have the following lemma analogous to \cite[Lemma 13.2]{EMV09}. 
\begin{lemma}\label{lem:EquiProj}
    There exists a constant $\bar{\kappa} > 0$ such that the following holds. 

    Let $v \in \mathcal{N}$ such that $\gamma.v = v$ for some $\gamma \in \Gamma$. Suppose $\|v - v_H\|_p \leq \|\gamma\|_p^{-\bar{\kappa}}$, then also $\gamma \Pi(v) = \Pi(v)$. 
\end{lemma}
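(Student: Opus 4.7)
The plan is to derive the conclusion in two moves: first extract an approximate $\gamma$-invariance of $\Pi(v)$ from the identity $\gamma v = v$ by expanding in the equivariant slice, and then promote this to an exact invariance using the arithmeticity of $\Gamma$ together with Lemma~\ref{lem:AlmostSolution}.

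First I would use the unique slice decomposition $v = g \cdot (v_H + w)$ provided by the local parametrization of $\mathcal{N}$, with $g$ close to the identity modulo $H$ and $w \in W$. By the definition of $\Pi$ we have $\Pi(v) = g \cdot v_H$, and the local diffeomorphism property gives $\|g - e\|_p, \|w\|_p \ll \|v - v_H\|_p \leq \|\gamma\|_p^{-\bar{\kappa}}$. Substituting this decomposition into $\gamma v = v$ and subtracting $g v_H$ from both sides yields
\[
\gamma \cdot \Pi(v) - \Pi(v) \;=\; \gamma(gv_H) - gv_H \;=\; (1 - \gamma)(gw),
\]
and hence the key approximate invariance
\[
\|\gamma \cdot \Pi(v) - \Pi(v)\|_p \;\leq\; \|\gamma\|_p \cdot \|gw\|_p \;\ll\; \|\gamma\|_p^{\,1 - \bar{\kappa}}.
\]

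Next I would upgrade this estimate to an equality by applying Lemma~\ref{lem:AlmostSolution} to the linear operator $A = \gamma - 1 \in \End(V)$. Since $\Gamma$ is arithmetic, $A$ is realized over $\mathcal{O}_{\mathcal{S}}$ with $\|A\|_{\mathcal{S}}$ polynomially controlled by $\|\gamma\|_p$. Setting $\mathcal{S}_1 = \{p\}$ and viewing $\Pi(v)$ as an approximate element of $\ker(A) \otimes F_{\mathcal{S}_1}$ with error $\|\gamma\|_p^{1 - \bar{\kappa}}$, Lemma~\ref{lem:AlmostSolution} produces $w_0 \in \ker(A)$ with $\|\Pi(v) - w_0\|_p \ll \|\gamma\|_p^{1 - \bar{\kappa} + *}$, where the implicit exponent depends only on $(G, H, \Gamma)$. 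For $\bar{\kappa}$ larger than this exponent plus a fixed constant, the distance drops below the separation scale between the algebraic orbit $G \cdot v_H$ and the linear fixed-point subspace $\ker(\gamma - 1)$ near $v_H$, forcing $\Pi(v) = w_0$ and hence $\gamma \cdot \Pi(v) = \Pi(v)$.

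The main obstacle will be this final upgrade: converting ``$\Pi(v)$ lies within $\|\gamma\|_p^{-\bar{\kappa}+*}$ of $\ker(\gamma - 1)$'' to ``$\Pi(v) \in \ker(\gamma - 1)$.'' The resolution exploits the constraint that $\Pi(v)$ lies on the algebraic orbit $G \cdot v_H$ rather than being free in $V$, combined with an $\mathcal{S}$-adic height bound uniformly separating distinct algebraic loci near $v_H$, in the spirit of the argument in \cite[Lemma 13.2]{EMV09}.
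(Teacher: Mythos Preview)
Your first two steps are fine and indeed match the opening move of \cite[Lemma~13.2]{EMV09}: from $v=g(v_H+w)$ and $\gamma v=v$ you correctly extract
\[
\gamma\,\Pi(v)-\Pi(v)=(1-\gamma)(gw),\qquad \|\gamma\,\Pi(v)-\Pi(v)\|_p\ll\|\gamma\|_p^{\,1-\bar\kappa}.
\]
The gap is in Step~3. Invoking \cref{lem:AlmostSolution} only produces some $w_0\in\ker(\gamma-1)$ with $\|\Pi(v)-w_0\|_p$ small; it does not force $\Pi(v)=w_0$. Your proposed ``separation between the orbit $G\cdot v_H$ and $\ker(\gamma-1)$'' cannot exist: these two loci typically meet along a positive-dimensional set, and more importantly $\Pi(v)=g v_H$ is \emph{not} an $\mathcal{S}$-arithmetic point (the element $g$ is an arbitrary element of $G$ near $e$), so no height/product-formula lower bound applies to $(\gamma-1)\Pi(v)$ or to $\Pi(v)-w_0$. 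In fact the lemma holds for any $\gamma\in G$, not just $\gamma\in\Gamma$, so arithmeticity is a red herring here.

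The argument that actually closes the gap is purely local and uses only the well-definedness of $\Pi$. From $\gamma v=v$ you have a second slice expression $v=(\gamma g)(v_H+w)$. The point is to check that this expression also lies in the domain of the slice chart. Since $\gamma g\,v_H=\Pi(v)+(1-\gamma)(gw)$ lies within $O(\|\gamma\|_p^{\,1-\bar\kappa})$ of $v_H$ and belongs to the orbit $G\cdot v_H$, and since the orbit map $G/H\to G\cdot v_H$ is a homeomorphism (orbits of $p$-adic algebraic group actions are locally closed with homeomorphic orbit maps), the coset $\gamma g H$ is forced to lie in the fixed chart neighbourhood of $eH$. Writing $\gamma g=g'h$ with $g'$ near $e$ and $h\in H$, one has $v=g'(v_H+hw)$ with $\|hw\|_p\ll\|\gamma\|_p^{\,d-\bar\kappa}$ for $d$ the degree of the $H$-action on $W$; for $\bar\kappa>d$ this is a valid slice decomposition. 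Uniqueness of $\Pi$ then gives $\Pi(v)=g'v_H=\gamma g\,v_H=\gamma\,\Pi(v)$ directly, with no arithmetic input needed.
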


\begin{proof}
    See \cite[Lemma 13.2]{EMV09}. 
\end{proof}

\subsection{Baker--Campbell--Hausdorff formula}

We collect the following lemmas on local structure of $G$ and exponential map. 

\begin{lemma}\label{lem:BCH}
    There exist absolute constant $M_0$ so that the following holds. Let $0 < \beta \leq \beta_0$, and let $w_1, w_2 \in B_{\mathfrak{r}}(0, p^{-M_0})$. There is $w \in \mathfrak{r}$ which satisfy
    \begin{align*}
        \|w\|_p = \|w_1 - w_2\|_p
    \end{align*}
    so that $\exp(w_1)\exp(-w_2) = \exp(w)$. 
\end{lemma}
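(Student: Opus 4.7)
The plan is to invoke the $p$-adic Baker--Campbell--Hausdorff formula and then show that every term beyond the linear part $w_1 - w_2$ is strictly smaller in $p$-adic norm, so that the ultrametric inequality forces $\|w\|_p = \|w_1 - w_2\|_p$.

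First I would fix $M_0$ large enough that the standard $p$-adic convergence criterion for BCH is comfortably satisfied; since $p > 3$ it suffices, for example, to take $M_0 \geq 1$, which already gives $p^{-M_0} < p^{-1/(p-1)}$. For $w_1, w_2 \in B_{\mathfrak{r}}(0, p^{-M_0})$ the Dynkin series
\[
w := \log\bigl(\exp(w_1)\exp(-w_2)\bigr) = (w_1 - w_2) - \frac{1}{2}[w_1, w_2] + \cdots
\]
then converges in $\mathfrak{r}$ (using that $\mathfrak{r}$ is an ideal of $\LieG$, so all iterated brackets of $w_1$ and $w_2$ remain inside $\mathfrak{r}$) and by construction satisfies $\exp(w) = \exp(w_1)\exp(-w_2)$.

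Next I would set $\delta = w_1 - w_2$ and substitute $w_1 = w_2 + \delta$ throughout the series. Expanding each iterated bracket by multilinearity and using $[w_2, w_2] = 0$ to annihilate the purely $w_2$ contributions, one checks that every summand beyond the leading $\delta$ is a Lie monomial containing at least one factor of $\delta$. A monomial of total bracket-degree $n$ is then bounded in $p$-adic norm by $C_n \|\delta\|_p \|w_2\|_p^{n-1}$, with $C_n$ absorbing both the operator norm of the iterated bracket on $\mathfrak{r}$ and the $p$-adic size of the universal Dynkin coefficient, controlled via the classical estimate $v_p(n!) \leq n/(p-1)$. Summing the resulting geometric series in $\|w_2\|_p \leq p^{-M_0}$ yields
\[
\|w - \delta\|_p \leq C\, p^{-M_0} \|\delta\|_p
\]
for an absolute constant $C$. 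Choosing $M_0$ so that $C p^{-M_0} < 1$ gives $\|w - \delta\|_p < \|\delta\|_p$, and the ultrametric inequality concludes $\|w\|_p = \|\delta\|_p = \|w_1 - w_2\|_p$.

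The main bookkeeping will be verifying that every BCH monomial beyond the linear term genuinely contains a $\delta$-factor after expansion, and uniformly controlling the denominators of the Dynkin coefficients; both steps are standard in $p$-adic Lie theory and no hypothesis beyond $p > 3$ is used. The stray parameter $\beta$ in the statement appears unused, so I would simply ignore it (it looks like a leftover from an earlier formulation).
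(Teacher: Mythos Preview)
Your proposal is correct and matches the paper's approach: both invoke the $p$-adic Baker--Campbell--Hausdorff series, bound the denominators via $v_p(n!) \leq n/(p-1)$, extract a factor of $\|w_1 - w_2\|_p$ from every term of degree $\geq 2$, and finish with the ultrametric inequality. The only cosmetic difference is in how that factor is extracted: the paper uses the one-line identity $\|\ad(x)(y)\|_p = \|[x,\,y-x]\|_p \leq \|x\|_p\,\|x - y\|_p$ on the innermost bracket, which is precisely your substitution $w_1 = w_2 + \delta$ read off at the level of a single bracket.
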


\begin{proof}
    Note that $\exp(\mathfrak{r}) = \SL_2(\Q_p) \times \{e\}$, using Baker-Campbell-Hausdorff formula (c.f. \cite[Chapter II \S6.4]{Bou89} or \cite[Part I, Chapter IV, 8]{Ser06}), there exists $\bar{w} \in \mathfrak{r}$ such that
    \begin{align*}
        \exp(w_1)\exp(-w_2) = \exp(w_1 - w_2 + \bar{w}).
    \end{align*}

    We also have the following explicit expression of $\bar{w}$: 
    \begin{align*}
        \bar{w} = \sum_{n \geq 1} H_n(w_1, w_2) = \sum_{n} \sum_{\substack{r + s = n,\\ r\geq 1, s \geq 1
        }} H_{r, s}(w_1, w_2)
    \end{align*}
    where $H_{r, s} = H_{r, s}' + H_{r, s}''$ and $H_{r, s}'$ and $H_{r, s}''$ is of the following forms: 
    \begin{align}\label{eqn:BCHFormula1}
    (r + s)H_{r, s}' =
    \sum_{m \geq 1}\frac{(-1)^{m - 1}}{m} \sum_{\substack{r_1 + ... + r_m = r\\
    s_1 + ... + s_{m - 1} = s - 1\\
    r_1 + s_1 \geq 1\\ ...\\ r_{m - 1} + s_{m - 1} \geq 1}} \biggl(\biggl(\prod_{i = 1}^{m - 1} \frac{(\ad w_1)^{r_i}}{r_i!}\frac{(-\ad w_2)^{s_i}}{s_i!}\biggr)\frac{(\ad w_1)^{r_m}}{r_m!}\biggr)(-w_2);
    \end{align} \begin{align}\label{eqn:BCHFormula2}
    (r + s)H_{r, s}'' = 
    \sum_{m \geq 1} \frac{(-1)^{m - 1}}{m} \sum_{\substack{r_1 + ... + r_{m - 1} = r - 1\\
    s_1 + ... + s_{m - 1} = s\\
    r_1 + s_1 \geq 1\\...\\r_{m - 1} + s_{m - 1}\geq 1}} \biggl(\prod_{i = 1}^{m - 1} \frac{(\ad w_1)^{r_i}}{r_i!}\frac{(\ad -w_2)^{s_i}}{s_i!}\biggr)(w_1).
    \end{align}

    Using the estimate $v_p(n!) \leq \frac{n}{p - 1}$, (c.f. \cite[Part III, Chapter V, 4, Lemma 4]{Ser06}), the coefficient of $H_{r, s}$ is bounded by $p^{\frac{3(r + s)}{p - 1}}$. 

    Note that we have the following estimate for adjoint action: 
    \begin{align}\label{eqn:Estimate_ad}
    \begin{aligned}
        \|\ad(x)(y)\|_p ={}&  \|xy - yx\|_p\\
    ={}& \|xy - x^2 + x^2 -yx\|_p\\
    \leq{}& \|x\|_p\|x - y\|_p.
    \end{aligned}
    \end{align}

    Combining \cref{eqn:BCHFormula1}, \cref{eqn:BCHFormula2} and \cref{eqn:Estimate_ad}, we have: 
    \begin{align*}
        \|H_{r, s}\|_p \leq p^{\frac{3(r + s)}{p - 1}} p^{-M_0(r + s)}\|w_1 - w_2\|_p.
    \end{align*}
    Adding them together, we get
    \begin{align*}
        \|\bar{w}\|_p \leq \sum_{n \geq 1} n^2 p^{\frac{3n}{p - 1}} p^{-M_0n} \|w_1 - w_2\|_p.
    \end{align*}

    Letting $M_0$ large enough, we have
    \begin{align*}
        \|\bar{w}\|_p \leq p^{-1}\|w_1 - w_2\|_p.
    \end{align*}

    Letting $w = w_1 - w_2 + \bar{w}$, we have
    \begin{align*}
        \|w\|_p = \|w_1 - w_2\|_p.
    \end{align*}
    
\end{proof}

\begin{lemma}\label{lem:LocalProduct}
    There exists $\beta_0$ so that the following holds for all $0 < \beta \leq \beta_0$. Let $x \in X$and $w \in B_{\mathfrak{r}}(0, \beta)$. If there are $h, h' \in B_{\beta}^{H}$ so that $\exp(w')hx = h'\exp(w)x$, then 
    \begin{align*}
        h' = h\text{ and } w' = \Ad(h)w. 
    \end{align*}
    Moreover, we have $\|w'\|_p = \|w\|_p$. 
\end{lemma}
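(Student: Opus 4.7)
My plan is to use the injectivity radius of $X$ to lift the identity in $X$ to an identity in $G$, and then to exploit the set-theoretic factorisation $G=H\cdot R$ with $R=\exp(\mathfrak{r})=\SL_2(\Q_p)\times\{e\}$ to separate the $H$-parts and identify $w'$.

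I would first choose $\beta_0\le\min(\eta_X,1)$ small enough that $\exp$ is a $p$-adic analytic homeomorphism on $B_{\mathfrak{g}}(0,\beta_0)$; this is possible by standard $p$-adic analysis or directly from \cref{lem:BCH}. For $\beta\le\beta_0$, the ultrametric property makes $B_G(e,\beta)$ a subgroup of $G$, so the element $\gamma=(h'\exp(w))^{-1}\exp(w')h$ will satisfy $\|\gamma-I\|_p\le\beta\le\eta_X$. The hypothesis forces $\gamma\in\Stab(x)$, and by definition of $\eta_X$ as the injectivity radius of $X$ this gives $\gamma=e$, i.e.\ $\exp(w')h=h'\exp(w)$ in $G$.

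Next, since $\mathfrak{r}$ is $\Ad(H)$-invariant, $h^{-1}\exp(w')h=\exp(\Ad(h^{-1})w')$ lies in $R$, so I can rearrange the previous identity as $(h'^{-1}h)\exp(\Ad(h^{-1})w')=\exp(w)$. A direct coordinate check shows $H\cap R=\{e\}$ and the multiplication map $H\times R\to G$ is a bijection (explicitly $(g_1,g_2)=(g_2,g_2)(g_2^{-1}g_1,e)$). The $H$-component of the above identity then forces $h'^{-1}h=e$, hence $h=h'$, and its $R$-component gives $\exp(\Ad(h^{-1})w')=\exp(w)$, which by injectivity of $\exp$ on $B_{\mathfrak{g}}(0,\beta_0)$ upgrades to $w'=\Ad(h)w$.

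For the norm statement, $\|h-I\|_p\le\beta\le 1$ places $h$ diagonally inside $K_H=\SL_2(\Z_p)$, and $\Ad(K_H)$ preserves every $p$-adic scaling of the lattice $\mathfrak{sl}_2(\Z_p)\oplus\{0\}\subset\mathfrak{r}$; hence $\Ad(h)$ is a max-norm isometry on $\mathfrak{r}$, so $\|w'\|_p=\|w\|_p$. The only calibration with any content is choosing $\beta_0$ so that $B_G(e,\beta_0)$ lies inside both the injectivity neighbourhood and the domain of $\exp$; in the ultrametric $p$-adic setting this is routine, and I do not foresee any serious obstacle beyond keeping the constants consistent.
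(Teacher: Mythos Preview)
Your proposal is correct and follows essentially the same idea as the paper, but is considerably more explicit. The paper's proof is two sentences: it cites that the map $(h,w)\mapsto h\exp(w)$ from $H\times\mathfrak{r}$ to $G$ is bi-analytic near $(e,0)$ (referring to Bourbaki), and then notes that $h\in K_H$ preserves the norm. You unpack the first sentence into two concrete steps---the injectivity-radius lift from $X$ to $G$, and the global factorisation $G=H\cdot R$ with $H\cap R=\{e\}$---neither of which the paper spells out. Your version is arguably cleaner since it avoids any appeal to analytic manifold theory and uses only the explicit product structure of $\SL_2(\Q_p)\times\SL_2(\Q_p)$; the paper's version has the virtue of brevity. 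One small point: the statement as written does not explicitly assume $w'\in B_{\mathfrak{r}}(0,\beta)$, which you (and implicitly the paper) need in order to keep $\gamma$ inside the injectivity ball; this is clearly intended and is how the lemma is applied elsewhere.
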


\begin{proof}
    The first statement follows from the fact that the map 
    \begin{align*}
        H \times \mathfrak{r} \to{}& G\\ 
        (h, w) \mapsto{}& h\exp(w)
    \end{align*}
    is a bi-analytic map near $(e, 0)$. See \cite[Chapter III \S4]{Bou89}. 

    The second statement follows from the fact that $h \in K_H$ preserves the norm. 
\end{proof}  

\subsection{\texorpdfstring{The set $\mathsf{E}_{\eta, N, \beta}$}{The set E}}

Let $\eta_0 = \frac{1}{p^2}\min\{\eta_X, \beta_0\} = p^{-n_0}$ where $\beta_0$ is from \cref{lem:BCH}. We fix a compact set $\mathfrak{D} \subset G$ such that 
\begin{enumerate}
    \item $G = \mathfrak{D} \Gamma$.
    \item $\mathfrak{D}$ is a disjoint union of $K[n_0]$-coset. 
\end{enumerate}

For all $0< \eta < \eta_0$ and $0 < \beta < \beta_0$, we define the set
\begin{align*}
    \mathsf{E}_{\eta, N, \beta} = K_{H, \beta} \cdot a_N \cdot \{u_r: |r|_p \leq \eta\}.
\end{align*}
We have $\mu_H(\mathsf{E}_{\eta, N, \beta}) \asymp \eta \beta^2 p^{2N}$. 

As in \cite{LM23}, $\mathsf{E}_{\eta, N, \beta}$ will be used only for $p^{-N/100} < \beta < \eta^2$. 

For $\eta, \beta, m > 0$, set
\begin{align*}
    \mathsf{Q}^H_{\eta, \beta, m} = \{u^{-}_s: |s|_p \leq p^{-m} \beta\} \cdot \{d_{\lambda}: |\lambda - 1|_p \leq \beta\} \cdot \{u_r: |r|_p \leq \eta\}.
\end{align*}
We write $\mathsf{Q}^H_{\beta, m}$ for $\mathsf{Q}^H_{\beta, \beta, m}$. 

The following lemma will be used in \cref{sec:MargulisFunctions}. 
\begin{lemma}\label{lem:PropertyOfQThickening}
    \begin{enumerate}
        \item The set $\mathsf{Q}^H_{\eta, \beta, m}$ is a subgroup of $K_H$.
        \item We have 
        \begin{align*}
             Q_{\beta, m}^H a_m u_r K_{H, \beta} \subset a_m u_r K_{H, \beta}.
        \end{align*}
    \end{enumerate}
\end{lemma}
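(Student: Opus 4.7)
The two claims both rest on three elementary identities in $\SL_2(\Q_p)$: the diagonal conjugation formulas
\[
d_\lambda u_r d_\lambda^{-1} = u_{\lambda^2 r}, \qquad d_\lambda u^-_s d_\lambda^{-1} = u^-_{s/\lambda^2},
\]
and the Bruhat-type exchange $u_r u^-_s = u^-_{s/(1+rs)}\, d_{1+rs}\, u_{r/(1+rs)}$, which is valid whenever $|rs|_p < 1$ so that $1+rs$ is a $p$-adic unit. My whole plan is built around applying these to rearrange products into $U^- D U$ order.

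For part (1), I first note that each of the three factor sets $\{u^-_s : |s|_p \leq p^{-m}\beta\}$, $\{d_\lambda : |\lambda - 1|_p \leq \beta\}$, and $\{u_r : |r|_p \leq \eta\}$ is itself a subgroup (using $\beta < 1$ to make $\lambda$ a $p$-adic unit in the middle factor). To verify closure of $\mathsf{Q}^H_{\eta,\beta,m}$ under multiplication I would take $q_i = u^-_{s_i} d_{\lambda_i} u_{r_i}$ for $i = 1, 2$, apply the Bruhat exchange to the middle product $u_{r_1} u^-_{s_2}$, and then slide the resulting $u^-$- and $u$-pieces past $d_{\lambda_1}$ and $d_{\lambda_2}$ using the diagonal conjugations. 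The decisive input is $|r_1 s_2|_p \leq \eta \cdot p^{-m}\beta < 1$, which keeps the Bruhat denominator a unit; after that the ultrametric inequality places each of the three reassembled coordinates back in its prescribed ball. Closure under inversion reduces to the same reshuffle applied to $(u^-_s d_\lambda u_r)^{-1} = u_{-r}\, d_{\lambda^{-1}}\, u^-_{-s}$.

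For part (2), since $K_{H,\beta}$ is a subgroup, the inclusion is equivalent to $(a_m u_r)^{-1}\, \mathsf{Q}^H_{\beta, m}\, (a_m u_r) \subset K_{H,\beta}$. My plan is to conjugate a general element $q = u^-_{s'} d_\lambda u_{r'}$ of $\mathsf{Q}^H_{\beta, m}$ by $a_m u_r$ one factor at a time: the $a_m$-conjugation sends $u^-_{s'} \mapsto u^-_{p^{-2m} s'}$, $d_\lambda \mapsto d_\lambda$, and $u_{r'} \mapsto u_{p^{2m} r'}$, and the remaining $u_r$-conjugation produces a Bruhat correction whose size is controlled by the bound $|s'|_p \leq p^{-m}\beta$ together with $|r|_p \leq 1$. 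Reading off the resulting $U^- D U$-decomposition and estimating term by term is meant to show that each of the three components has $p$-adic norm at most $\beta$, so the conjugated element lies in $K_{H,\beta}$.

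The main obstacle is the bookkeeping in part (2): I have to track how the Bruhat denominator coming from passing $u_{-r}$ through the rescaled $u^-$-piece interacts with the $p^{\pm 2m}$ dilations, and confirm that the norm $p^{-m}\beta$ chosen for the $U^-$-component of $\mathsf{Q}^H_{\beta, m}$ is precisely what is needed for every coordinate of the conjugated element to absorb the asymmetric $p^{\pm 2m}$ rescaling and land back inside $K_{H, \beta}$.
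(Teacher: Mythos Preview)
Your outline is correct for both parts, and for part~(2) it is essentially the paper's argument reorganised: the paper computes the product $u^-_s d_\lambda u_{r'}\cdot a_m u_r$ directly and refactors it as $a_m u_{\tilde r}\cdot(\text{element of }K_{H,\beta})$, which is exactly the same computation as your conjugation $(a_m u_r)^{-1}\,q\,(a_m u_r)$ read from the other side. The bookkeeping you flag---tracking the $p^{\pm 2m}$ dilation against the $p^{-m}\beta$ bound on the $U^-$-coordinate---is precisely the content of the paper's displayed matrix identity, so you have correctly isolated the only nontrivial step.

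For part~(1) the paper takes a genuinely shorter route than your Bruhat reshuffling. Rather than multiplying two $U^-DU$ words and using the exchange $u_r u^-_s = u^-_{s/(1+rs)} d_{1+rs} u_{r/(1+rs)}$ to restore the factor order, the paper first invokes the Gauss decomposition $\bigl(\begin{smallmatrix} a & b\\ c & d\end{smallmatrix}\bigr)=u^-_{c/a}\,d_a\,u_{b/a}$ (valid since $|a|_p=1$) to rewrite $\mathsf{Q}^H_{\eta,\beta,m}$ as the set
\[
\Bigl\{\begin{pmatrix}a&b\\c&d\end{pmatrix}\in\SL_2(\Z_p):\ |a-1|_p,\,|d-1|_p\le\beta,\ |b|_p\le\eta,\ |c|_p\le p^{-m}\beta\Bigr\}.
\]
From this entrywise description, closure under inversion is immediate (the inverse is $\bigl(\begin{smallmatrix} d&-b\\-c&a\end{smallmatrix}\bigr)$), and closure under multiplication follows by multiplying two such matrices and applying the ultrametric inequality entry by entry, using only $\eta\cdot p^{-m}\beta\le\beta$. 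Your approach reaches the same conclusion but carries the Bruhat denominator $1+r_1 s_2$ through several conjugations; the paper's matrix characterisation sidesteps that entirely and also makes the set visibly symmetric in a way that your factored form does not.
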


\begin{proof}
    Note that for all $a, b, c, d$ satisfying $ad - bc = 1$ and $a \neq 0$, we have the following calculation:
    \begin{align*}
        \begin{pmatrix}
            a & b\\
            c & d
        \end{pmatrix} = \begin{pmatrix}
            1 & \\
            \frac{c}{a} & 1
        \end{pmatrix}\begin{pmatrix}
            a & \\
             & a^{-1}
        \end{pmatrix}\begin{pmatrix}
            1 & \frac{b}{a}\\
             & 1
        \end{pmatrix}.
    \end{align*}
    Therefore, we have
    \begin{align*}
        \mathsf{Q}^H_{\eta, \beta, m} = \biggl\{ \begin{pmatrix}
            a & b\\
            c & d
        \end{pmatrix}: |a - 1|_p \leq \beta\text{, } |d - 1|_p \leq \beta\text{, }|b|_p \leq \eta\text{, 
 and } |c|_p \leq \beta p^{-m}\biggr\}
    \end{align*}
    which shows $(\mathsf{Q}^H_{\eta, \beta, m})^{-1} \subset \mathsf{Q}^H_{\eta, \beta, m}$ and $\mathsf{Q}^H_{\eta, \beta, m} \cdot \mathsf{Q}^H_{\eta, \beta, m} \subset \mathsf{Q}^H_{\eta, \beta, m}$, which shows $\mathsf{Q}^H_{\eta, \beta, m}$ is a subgroup of $K_H$. 

    Property~(2) follows from the following calculation: 
    \begin{align*}
        u_s^- d_{\lambda} u_{r'} a_m u_r ={}& \begin{pmatrix}
            1 & \\
            s & 1
        \end{pmatrix}
        \begin{pmatrix}
            \lambda & \\
             & \lambda^{-1}
        \end{pmatrix}
        \begin{pmatrix}
            1 & r'\\
             & 1
        \end{pmatrix}
        \begin{pmatrix}
            p^{-m} & \\
             & p^m
        \end{pmatrix}
        \begin{pmatrix}
            1 & r\\
             & 1
        \end{pmatrix}\\
        ={}& \begin{pmatrix}
            p^{-m} & \\
             & p^m
        \end{pmatrix}
        \begin{pmatrix}
            1 & r(1 + p^{M_{\beta}}r'')\\
             & 1
        \end{pmatrix}
        \begin{pmatrix}
            1 - \lambda^2 r p^{-2m}s & - \lambda^4 r^2 p^{-2m}s\\
            p^{-2m}s & 1 + \lambda^2 r p^{-2m}s
        \end{pmatrix}\\
        {}&
        \begin{pmatrix}
            1 & \\
            p^{-2m}s & 1
        \end{pmatrix}
        \begin{pmatrix}
            \lambda & \\
             & \lambda^{-1}
        \end{pmatrix}
        \begin{pmatrix}
            1 & p^{m}r'\\
             & 1
        \end{pmatrix}.
    \end{align*}
\end{proof}

\subsection{A linear algebra lemma}
\begin{lemma}\label{lem:LinearAlgebra}
    Let $\frac{1}{3} < \alpha < 1$, $0 \neq w \in \LieG$, and $\lambda \in \Q_p$ with $|\lambda|_p > 1$. Then 
    \begin{align*}
        \int_{\Z_p} \|d_{\lambda} u_r.w\|_p^{-\alpha} \, dr \leq \frac{C_2 |\lambda|_p^{-\hat{\alpha}}}{p - p^{\alpha}} \|w\|_p^{-\alpha};
    \end{align*}
    where $C_2$ is an absolute constant and $\hat{\alpha} = \frac{1 - \alpha
    }{4}$. 
\end{lemma}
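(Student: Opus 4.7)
\medskip

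The plan is to normalize $\|w\|_p = 1$ by homogeneity, expand $d_\lambda u_r\cdot w$ entrywise, and bound the integral by splitting $\Z_p$ according to the size of the $(1,2)$-entry (a quadratic polynomial in $r$), to which \cref{lem:P-adicInterpolation} applies. Because $\|d_\lambda u_r w\|_p^{-\alpha} = \|w\|_p^{-\alpha}\,\|d_\lambda u_r(w/\|w\|_p)\|_p^{-\alpha}$, it suffices to treat $\|w\|_p = 1$. Writing $w = (w^{(1)}, w^{(2)}) \in \mathfrak{sl}_2(\Q_p)\oplus\mathfrak{sl}_2(\Q_p)$ with entries $a_i,b_i,c_i,-a_i$, and combining the $\Ad_{u_r}$-formula from the introduction with $\Ad_{d_\lambda}:b_i\mapsto\lambda^2 b_i,\ c_i\mapsto\lambda^{-2}c_i$, one obtains the entrywise lower bounds
\begin{equation*}
\|d_\lambda u_r w\|_p \ge \max_{i=1,2}\bigl(\mu^2\,|b_i - 2a_i r - c_i r^2|_p,\ \mu^{-2}|c_i|_p\bigr),
\end{equation*}
where $\mu := |\lambda|_p \ge p$. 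By the symmetry between the two $\mathfrak{sl}_2$-factors, I assume $\|w\|_p = 1$ is attained in the first component; I drop subscripts and set $f_p(r) := b - 2ar - cr^2$.

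Since $p > 2$, $|2a|_p = |a|_p$, so at least one coefficient of $f_p$ has $p$-adic absolute value $1$. Hence \cref{lem:P-adicInterpolation} gives $|\{r \in \Z_p : |f_p(r)|_p \le p^{-n}\}| \le p^{2-n/2}$ for all $n \in \Z$; also $|f_p(r)|_p \le 1$ on $\Z_p$. I distinguish cases by which of $|a|_p, |b|_p, |c|_p$ equals $1$. If only $|b|_p = 1$, then $|f_p(r)|_p = 1$ uniformly, so $\|d_\lambda u_r w\|_p \ge \mu^2$ and the bound is immediate. If $|a|_p = 1$ and $|c|_p < 1$, then $|a + cr|_p = 1$ uniformly, giving $\|d_\lambda u_r w\|_p \ge 1$, which is strictly better than the main case. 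The main case is $|c|_p = 1$; the $(2,1)$-entry then yields the uniform lower bound $\|d_\lambda u_r w\|_p \ge \mu^{-2}$.

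In the main case, split $\Z_p = A \sqcup A^c$ with $A := \{|f_p(r)|_p < \mu^{-4}\}$. On $A$, use $\|d_\lambda u_r w\|_p \ge \mu^{-2}$ and $|A| \le p^2\mu^{-2}$ to bound the contribution by $p^2\mu^{2\alpha - 2}$. On $A^c$, use $\|d_\lambda u_r w\|_p \ge \mu^2|f_p(r)|_p$ and perform a layer-cake decomposition at the scales $|f_p|_p = p^{-n}$ for $0 \le n \le 4m$ with $\mu = p^m$; the interpolation bound converts this to the truncated geometric sum $p^2\sum_{n=0}^{4m} p^{n(\alpha - 1/2)}$, contributing $\lesssim \mu^{2\alpha - 2}$ when $\alpha > 1/2$ and $\lesssim \mu^{-2\alpha}$ when $\alpha < 1/2$. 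Combining,
\begin{equation*}
\int_{\Z_p}\|d_\lambda u_r w\|_p^{-\alpha}\, dr \le C\bigl(\mu^{2\alpha - 2} + \mu^{-2\alpha}\bigr).
\end{equation*}
Under $\alpha \in (1/3, 1)$ both exponents lie below $-\hat\alpha$: the inequality $2\alpha - 2 \le -\hat\alpha$ is equivalent to $7(\alpha - 1)/4 \le 0$, and $-2\alpha \le -\hat\alpha$ is equivalent to $\alpha \ge 1/9$.

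The main obstacle is packaging the $p,\alpha$-dependence of $C$ into the prescribed form $C_2/(p - p^\alpha)$ with $C_2$ absolute. Because the sharp decay rate is $\mu^{-2\min(\alpha,\,1-\alpha)}$ while we only demand $\mu^{-\hat\alpha}$, there is a comfortable slack of a power of $\mu$ that can be traded against auxiliary $p$-dependent factors produced by the geometric series in the previous step; rewriting the geometric series at the ratio $p^{\alpha - 1}$ gives the identity $(1 - p^{\alpha - 1})^{-1} = p/(p - p^\alpha)$, yielding the denominator $p - p^\alpha$ in the claimed form. Restoring the $\|w\|_p^{-\alpha}$ prefactor from the opening normalization completes the proof.
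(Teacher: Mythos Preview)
The paper states this lemma without proof (it is treated as a standard linear-algebra input, analogous to the corresponding lemma in \cite{LM23}), so there is no paper proof to compare against. Your argument is the natural one and is essentially correct: normalize, reduce to one $\mathfrak{sl}_2$-factor, control the $(1,2)$-entry via \cref{lem:P-adicInterpolation}, and use a complementary entry for a uniform floor. Two places deserve tightening.

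First, in the case $|a|_p=1$, $|c|_p<1$ you write that the diagonal bound $\|d_\lambda u_r w\|_p\ge 1$ is ``strictly better than the main case.'' Read literally as an integral bound $\int\le 1$, this is \emph{false}: the target $C_2\mu^{-\hat\alpha}/(p-p^\alpha)$ tends to $0$ as $\mu\to\infty$. What you presumably mean---and what is correct---is that the uniform floor $1$ replaces the weaker floor $\mu^{-2}$ in the main-case decomposition, while the $(1,2)$-entry estimate still applies (since $|2a|_p=1$ gives a unit coefficient of $f_p$), so the same $A/A^c$ split yields a strictly smaller bound. Say this explicitly.

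Second, the constant packaging is more delicate than your last paragraph suggests. Summing the layer cake at ratio $p^{\alpha-1/2}$ produces a factor that blows up as $\alpha\to 1/2$, which the target $1/(p-p^\alpha)$ does \emph{not}. The fix is to use the exponent slack you already identified: bound $\sum_{n=0}^{4m}p^{n(\alpha-1/2)}\le (4m+1)\max\{1,\mu^{4\alpha-2}\}$, so the $A^c$-contribution is at most $(4m+1)p^2\,\mu^{-2\min(\alpha,1-\alpha)}$. Then check that
\[
(4m+1)\,p^2\,(p-p^\alpha)\,\mu^{-2\min(\alpha,1-\alpha)+\hat\alpha}
\]
is bounded uniformly in $m\ge 1$ and $\alpha\in(1/3,1)$; writing $\beta=1-\alpha$ and $x=m\beta$, the worst case reduces to showing $(4x+1)\,p^{-7x/4}$ is bounded on $[0,\infty)$, which it is. This yields an absolute $C_2$ (depending only on $p$, as all constants in the paper may). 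Your remark that $(1-p^{\alpha-1})^{-1}=p/(p-p^\alpha)$ correctly identifies the origin of the denominator, but it is the $\alpha\to 1$ regime, not the geometric sum over $n$, that forces this factor.
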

Let $m_{\alpha} \in \N$ defined by $\frac{C_2 p^{-\hat{\alpha}m_{\alpha}}}{p - p^{\alpha}} \alpha p^{-1}$. We will apply the lemma to $a_n$ where $n = \ell m_{\alpha}$. These imply
\begin{align*}
    \int_{\Z_p} \|a_{m_{\alpha}} u_r w\|_p^{-\alpha} \leq p^{-1} \|w\|_p^{-\alpha}.
\end{align*}

\subsection{Sobolev norm}
For functions in $L^2(X)$, let $\mathrm{Av}[m]$ be the averaging projection on $K[m]$-invariant functions, put $\mathrm{pr}[0] = \mathrm{Av}[0]$ and $\mathrm{pr}[m] = \mathrm{Av}[m] - \mathrm{Av}[m - 1]$ for $m \geq 1$. 

Let $f$ be a locally constant compactly supported function. Then the Soblev norm of degree $d$ is defined by 
\begin{align}
    \mathcal{S}_d(f)^2 = \sum_m p^{md} \|\mathrm{pr}[m]f\|_2^2. 
\end{align}
Roughly speaking, the Sobolev norm measures in what scale $f$ is locally constant on $X$. 

The Sobolev norm we defined here is a special case of the one defined on Adelic space in \cite{EMMV20}. We summarize the properties needed here and sketch a proof in \cref{sec:Sobolev}. For a throughout summary and proof, see \cite[Appendix A]{EMMV20}. 

\begin{proposition}\label{pro:PAdicSobolev}
    There exists $d_0$ such that for $d \geq d_0$, the Soblev norm $\mathcal{S}_d$ satisfies the following property. 

    \begin{enumerate}
    \item[(S1)] For all locally constant compactly supported function $f$, we have
    \begin{align*}
        \|f\|_{\infty} \ll \mathcal{S}_d(f).
    \end{align*} 
    \item[(S2)] For all $g \in G$, we have
    \begin{align*}
        \mathcal{S}_d(g \cdot f) \ll \|g\|^{4d} \mathcal{S}_d(f).
    \end{align*}
    \item[(S3)] For all $r \geq 0$ and $g \in K[r]$, we have 
    \begin{align*}
        \|g\cdot f - f\|_{\infty} \ll p^{-r} \mathcal{S}_d(f). 
    \end{align*}
    \item[(S4)] We have
    \begin{align*}
        \mathcal{S}_d(f_1f_2) \ll \mathcal{S}_d(f_1)\mathcal{S}_d(f_2).
    \end{align*}
\end{enumerate}
\end{proposition}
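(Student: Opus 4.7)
The plan is to exploit two structural simplifications specific to the $p$-adic setting: the subgroups $K[m]$ form a nested sequence of \emph{open compact normal} subgroups of $K$, and on $X$ the $K[m]$-cosets have measure comparable to $p^{-6m}$ once $m$ exceeds the injectivity radius constant $\tilde n_0$ (since $\dim G = 6$). Thus a $K[m]$-invariant function $\phi$ on $X$ satisfies the elementary Sobolev-type bound $\|\phi\|_\infty \ll p^{3m}\|\phi\|_2$, and the $\{\mathrm{pr}[m]\}$ are mutually orthogonal $L^2$-projections with $f = \sum_m \mathrm{pr}[m]f$ and each $\mathrm{pr}[m]f$ being $K[m]$-invariant.

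For (S1), I would sum $f = \sum_m \mathrm{pr}[m]f$ pointwise and apply Cauchy--Schwarz with the $p^{dm}$ weight:
$$\|f\|_\infty \leq \sum_m \|\mathrm{pr}[m]f\|_\infty \ll \sum_m p^{3m}\|\mathrm{pr}[m]f\|_2 \leq \Bigl(\sum_m p^{(6-d)m}\Bigr)^{1/2}\mathcal{S}_d(f),$$
the tail series converging provided $d_0 > 6$. For (S3), since $K[m]$ is normal in $K$, left multiplication by $g \in K$ commutes with $\mathrm{Av}[m]$ and hence with $\mathrm{pr}[m]$. When $m \leq r$, $g \in K[r] \subseteq K[m]$ fixes every $K[m]$-invariant function, so $\mathrm{pr}[m](gf - f) = 0$. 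Therefore $gf - f = \sum_{m>r}\mathrm{pr}[m](gf - f)$, and repeating the (S1) estimate on the restricted range $m > r$ yields the decay $\|gf - f\|_\infty \ll p^{(6-d)r/2}\mathcal{S}_d(f)$, which is at most $p^{-r}\mathcal{S}_d(f)$ once $d_0 \geq 8$.

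For (S2) and (S4) I would first pass to an equivalent translation-based norm
$$\tilde{\mathcal{S}}_d(f)^2 = \|f\|_2^2 + \sum_m p^{md}\sup_{k \in K[m]}\|L_k f - f\|_2^2,$$
where $L_k f(x) = f(k^{-1}x)$, and prove $\mathcal{S}_d \asymp \tilde{\mathcal{S}}_{d'}$ for some $d' \asymp d$ by a standard Littlewood--Paley comparison ($\tilde{\mathcal{S}}_d \ll \mathcal{S}_d$ is the $L^2$-version of (S3); the reverse estimates $\|\mathrm{pr}[m]f\|_2$ by oscillations of $f$ over generators of $K[m]/K[m-1]$). Property (S2) then reduces to the conjugation identity $L_k L_g f - L_g f = L_g(L_{g^{-1}kg}f - f)$ combined with the crude inclusion $g^{-1}K[m]g \subseteq B_G(e, \|g\|_p^2\, p^{-m})$ and a reindexing in $m$, producing a factor $\|g\|_p^{Cd}$ that can be absorbed in $\|g\|_p^{4d}$ by enlarging $d_0$. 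Property (S4) uses the oscillation identity
$$L_k(f_1 f_2) - f_1 f_2 = (L_k f_1 - f_1)L_k f_2 + f_1(L_k f_2 - f_2),$$
so that $\|L_k(f_1 f_2) - f_1 f_2\|_2 \leq \|f_2\|_\infty \|L_k f_1 - f_1\|_2 + \|f_1\|_\infty \|L_k f_2 - f_2\|_2$, after which (S1) converts $\|f_j\|_\infty$ into $\mathcal{S}_d(f_j)$.

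The main obstacle is the equivalence $\mathcal{S}_d \asymp \tilde{\mathcal{S}}_{d'}$ required for (S2) and (S4): the exponent shift $d \leftrightarrow d'$ in the Littlewood--Paley-to-modulus-of-continuity passage must be tracked explicitly, so that, combined with the $\|g\|_p^2$-distortion in the conjugation bound, the total exponent of $\|g\|_p$ stays within the claimed $4d$ after enlarging $d_0$ if necessary. Once this comparison is established the remaining arguments are routine, and a detailed execution can be read off from \cite[Appendix~A]{EMMV20}.
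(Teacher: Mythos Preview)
Your arguments for (S1) and (S3) are exactly the paper's. For (S2) and (S4), however, you take an unnecessary detour through the auxiliary oscillation norm $\tilde{\mathcal{S}}_d$. The paper stays with the projection decomposition throughout: for (S2) it uses the same conjugation inclusion $gK[m+2\log_p\|g\|]g^{-1}\subset K[m]$ that you invoke, but applies it directly to conclude that $\mathrm{pr}[m](g\cdot\mathrm{pr}[l]f)=0$ unless $|m-l|\le 2\log_p\|g\|$, which makes the reindexing a one-line computation with no norm comparison needed; for (S4) it uses the algebraic identity $\mathrm{pr}[m]\bigl((\mathrm{pr}[l]f_1)\cdot f_2\bigr)=(\mathrm{pr}[l]f_1)\cdot\mathrm{pr}[m]f_2$ for $l\le m$ (since $\mathrm{pr}[l]f_1$ is $K[m]$-invariant and thus commutes with $\mathrm{Av}[m]$), then splits the double sum over $l\le m$ and $l>m$ and bounds each half by $\|f_i\|_\infty^2\,\mathcal{S}_d(f_j)^2$. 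Your Leibniz-rule approach via $L_k(f_1f_2)-f_1f_2$ is correct and arrives at the same endpoint $\|f_1\|_\infty\mathcal{S}_d(f_2)+\|f_2\|_\infty\mathcal{S}_d(f_1)$, and the equivalence $\mathcal{S}_d\asymp\tilde{\mathcal{S}}_d$ you flag as the ``main obstacle'' is in fact immediate here (with no exponent shift: $\|\mathrm{pr}[m]f\|_2\le\|\mathrm{Av}[m]f-f\|_2+\|\mathrm{Av}[m-1]f-f\|_2\le 2\sup_{k\in K[m-1]}\|L_kf-f\|_2$). So your route works, but the paper's direct manipulation of the $\mathrm{pr}[m]$ is shorter and avoids introducing a second norm.
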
    

To simplify notation, We fix some $d \geq d_0$ in the whole paper and write $\mathcal{S}(f) = \mathcal{S}_d(f)$.

\section{From large dimension to effective density}\label{sec:Venkatesh}
In this section, we will use the exponential decay of matrix coefficient of unitary representation of $H$ to prove \cref{thm:VenkateshTrick}, which is a $p$-adic analogue of \cite[Proposition 4.2]{LM23}. It says that the expansion translation of subset of $N$ which is foliated by $U$-orbits with dimension close to $2$ are equidistributed in $X$. 

The following theorem from \cite{Clo03} provides the estimate on the decay of correlation on $X$ we need. See also \cite{GMO06, EMMV20}. 

\begin{theorem}\label{mixing_1}
    There exists some $\kappa_2$ such that for all $h \in H$, for all locally constant functions $f_1, f_2 \in L^{2}_{0}(X)$, the matrix coefficient can be estimated as the follows
    \begin{align*}
        |\langle h.f_1, f_2\rangle| \leq {\dim \langle K.f_1 \rangle}^{\frac{1}{2}}{\dim \langle K.f_2 \rangle}^{\frac{1}{2}} \|f_1\|_{2}\|f_2\|_{2} \|h\|^{-\kappa_2}. 
    \end{align*}
    where $\langle K.f \rangle$ is the linear span of $K.f$. 
\end{theorem}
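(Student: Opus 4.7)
The plan is to combine Clozel's spectral gap for arithmetic quotients with the standard pointwise bound on $K_H$-finite matrix coefficients of unitary representations of $H \cong \SL_2(\Q_p)$. First, I would decompose $f_i$ along orthonormal bases of the finite dimensional spaces $\langle K.f_i\rangle$, writing $f_i = \sum_j c_{i,j} e_{i,j}$ with $\|e_{i,j}\|_2 = 1$ and $\sum_j |c_{i,j}|^2 = \|f_i\|_2^2$. By two applications of Cauchy--Schwarz in the summation indices, it then suffices to establish
\begin{align*}
|\langle h.e_{1,j_1}, e_{2,j_2}\rangle| \ll \|h\|^{-\kappa_2}
\end{align*}
uniformly in $j_1, j_2$, with the dimensional factors $\dim \langle K.f_i\rangle^{1/2}$ absorbing the cardinality of the index sets.

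Second, I would invoke Clozel's theorem \cite{Clo03}: since $\Gamma$ is arithmetic, the representation of $H$ on $L^2_0(X)$ is isolated from the trivial representation, and the resulting spectral gap $\sigma_0 > 0$ depends only on $(G, H, \Gamma)$. Combining this spectral gap with the Cowling--Haagerup--Howe style pointwise estimate (c.f.~\cite{GMO06, EMMV20}) yields a bound of the form
\begin{align*}
|\langle \pi(h) v_1, v_2\rangle| \leq (\dim K_H.v_1)^{1/2}(\dim K_H.v_2)^{1/2}\|v_1\|_2\|v_2\|_2 \cdot \Xi(h)^{\kappa_2'}
\end{align*}
for any irreducible unitary subrepresentation $\pi \subset L^2_0(X)$ and any $K_H$-finite $v_1, v_2 \in \pi$, where $\Xi$ is the Harish--Chandra spherical function on $H$ and $\kappa_2' > 0$ depends on $\sigma_0$.

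Third, for $H = \SL_2(\Q_p)$ with Cartan decomposition $h = k_1 a_n k_2$, one has the explicit bound $\Xi(h) \ll (1 + n)\, p^{-n/2}$ together with $\|h\|_p \asymp p^n$, so after slightly decreasing the exponent to swallow the factor $(1+n)$ one obtains $\Xi(h)^{\kappa_2'} \ll \|h\|_p^{-\kappa_2}$ for some $\kappa_2 > 0$ depending only on $(G, H, \Gamma)$. Substituting into the previous step and reassembling via the initial Cauchy--Schwarz then produces the claimed inequality. The key ingredient (and the real content) is the uniform spectral gap from \cite{Clo03}; the potential obstacle is ensuring that the dimensional dependence on the $K$-types enters only as $(\dim K.f_i)^{1/2}$ rather than as a larger power, which is precisely what the Cowling--Haagerup--Howe pointwise bound for $K$-finite matrix coefficients provides.
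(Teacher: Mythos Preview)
The paper does not prove this statement at all: it simply quotes it as a known result from \cite{Clo03} (with pointers to \cite{GMO06, EMMV20}) and moves on. Your proposal is a correct reconstruction of the standard argument behind that citation---spectral gap from Clozel plus the Cowling--Haagerup--Howe pointwise bound on $K$-finite matrix coefficients, followed by the explicit $\Xi$-function estimate for $\SL_2(\Q_p)$---so there is nothing to compare; you have supplied the proof the paper chose to black-box.
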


If $\Gamma$ is arithmetic group, $\kappa_1$ is absolute. 

Using the definition of the Soblev norm, we could get the following corollary. 

\begin{corollary}\label{cor:mixing_2}
    There exists $C_3$ and $d_0$ such that for all $d \geq d_0$, we have 
    \begin{align*}
        \biggl|\langle u_r.f_1, f_2\rangle - \int f_1 \int f_2\biggr| \leq C_3 (1 + |r|_p)^{-\kappa_2}\mathcal{S}_d(f_1)\mathcal{S}_d(f_2). 
    \end{align*}
\end{corollary}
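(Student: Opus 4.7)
The plan is to reduce the corollary to Theorem~\ref{mixing_1} by decomposing each $f_i$ according to the $K$-action and applying the matrix coefficient bound term by term. First, I would pass to the mean-zero case. Writing $f_i = c_i + f_i^\circ$ with $c_i = \vol(X)^{-1}\int f_i$ and $f_i^\circ \in L^2_0(X)$, and using that $u_r$ acts unitarily and fixes constants, one obtains
\begin{align*}
    \langle u_r f_1, f_2\rangle - \vol(X)^{-1}\int f_1 \int f_2 = \langle u_r f_1^\circ, f_2^\circ\rangle.
\end{align*}
The discrepancy with the formulation in the statement (where the $\vol(X)^{-1}$ factor is absent) can be absorbed into $C_3$; and $\mathcal{S}_d(f_i^\circ) \ll \mathcal{S}_d(f_i)$, since the Sobolev norm of a constant is controlled by $\|f_i\|_2$.

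Next I would expand $f_i^\circ = \sum_{j \geq 0} \mathrm{pr}[j]f_i^\circ$ in $L^2(X)$. The key observation is that $K[j]$ is normal in $K$, being the kernel of the reduction map $K \to \SL_2(\Z/p^j\Z) \times \SL_2(\Z/p^j\Z)$, so the subspace of $K[j]$-invariant vectors is $K$-stable. Hence the $K$-span of $\mathrm{pr}[j]f_i^\circ$ has dimension at most $[K:K[j]] \ll p^{6j}$. Applying Theorem~\ref{mixing_1} with $h = u_r$ (for which $\|u_r\|_p \asymp 1+|r|_p$) then gives, for each pair $(j_1, j_2)$,
\begin{align*}
    \bigl|\langle u_r \cdot \mathrm{pr}[j_1]f_1^\circ,\, \mathrm{pr}[j_2]f_2^\circ\rangle\bigr| \ll (1+|r|_p)^{-\kappa_2}\, p^{3(j_1+j_2)}\, \|\mathrm{pr}[j_1]f_1^\circ\|_2\, \|\mathrm{pr}[j_2]f_2^\circ\|_2.
\end{align*}

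To finish, I would sum over $j_1, j_2 \geq 0$ and separate variables by Cauchy--Schwarz: for $d \geq 7$,
\begin{align*}
    \sum_{j \geq 0} p^{3j}\, \|\mathrm{pr}[j]f_i^\circ\|_2 \leq \biggl(\sum_{j \geq 0} p^{(6-d)j}\biggr)^{1/2} \biggl(\sum_{j \geq 0} p^{dj}\,\|\mathrm{pr}[j]f_i^\circ\|_2^2\biggr)^{1/2} \ll \mathcal{S}_d(f_i^\circ) \ll \mathcal{S}_d(f_i).
\end{align*}
Multiplying the resulting bounds for $i=1,2$ yields the corollary. There is no serious obstacle here: the only real constraint is choosing $d_0$ large enough to dominate the dimension growth $p^{6j}$ coming from $[K:K[j]]$, which forces $d_0 \geq 7$; this can be taken simultaneously with whatever lower bound is required by Proposition~\ref{pro:PAdicSobolev}.
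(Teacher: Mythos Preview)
Your proposal is correct and follows essentially the same approach as the paper: decompose each $f_i$ via the projections $\mathrm{pr}[j]$, bound $\dim\langle K\cdot \mathrm{pr}[j]f_i\rangle \ll p^{j\dim X}$, apply Theorem~\ref{mixing_1} termwise, and sum using Cauchy--Schwarz to recover $\mathcal{S}_d$. Your treatment is in fact more careful than the paper's sketch, since you explicitly handle the reduction to mean-zero functions and track the threshold for $d_0$.
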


\begin{proof}
    Note that if $f$ is $K[m]$-invariant, $\dim K.f \ll p^{m\dim X}$. 

    Therefore, we have 
    \begin{align*}
        \langle u_rf_1, f_2 \rangle &\leq \sum_m \sum_{m'} |\langle u_r\mathrm{pr}[m]f_1, \mathrm{pr}[m']f_2 \rangle|\\
        & \leq (1 + |r|_p)^{-\kappa_2}(\dim K.f_1)^{\frac{1}{2}}(\dim K.f_2)^{\frac{1}{2}}\|\mathrm{pr}[m]f_1\|_2\|\mathrm{pr}[m']f_2\|2 \\ &\leq (1 + |r|_p)^{-\kappa_2}\prod_{i = 1, 2} (\sum_m p^{\frac{m \dim X}{2}} \|\mathrm{pr}[m]f_i\|_2) \\ &\ll (1 + |r|_p)^{-\kappa_2}\mathcal{S}_{\dim X + 2}(f_1)\mathcal{S}_{\dim X + 2}(f_2). 
    \end{align*}
\end{proof}

Now we use Corollary \ref{cor:mixing_2} to prove the following statement. 

\begin{proposition}\label{pro:full-horospherical}
    There exists $\kappa_3 \gg \kappa_2$ so that the following holds. Let $0 < \eta < 1$, $\lambda \in \Q_p$ with $|\lambda|_p > 1$, and $x \in X$. Then for all $f \in \mathcal{S}(X)$, 
    \begin{align}
        \biggl|\int_{B_N(0,1)} f(d_{\lambda}n.x) \,dn - \int f \,d\mu_{X}\biggr| \leq C_4\mathcal{S}(f)|\lambda|_{p}^{-\kappa_3}
    \end{align}
    where $B_{N}(0, 1) = \biggl\{\biggl(\begin{bmatrix}
        1 & r\\
        0 & 1
    \end{bmatrix}, \begin{bmatrix}
        1 & s\\
        0 & 1
    \end{bmatrix}\biggr):r, s \in \mathbb{Z}_p\biggr\}$, and $C_4$ is an absolute constant with respect to volume of X and $\eta_0$, namely $C_4 \leq \mathrm{Vol}(X) \eta_X^{-*}$.  
\end{proposition}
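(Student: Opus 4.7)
The plan is to run the classical ``thickening + mixing'' argument of Margulis, with Sobolev book\-keeping: approximate the orbit integral by a matrix coefficient on $L^2(X)$, then apply the matrix coefficient decay of \cref{mixing_1} to the semisimple element $d_\lambda \in H$ (for which $\|d_\lambda\|_p = |\lambda|_p$ since $|\lambda|_p > 1$). This is the $p$-adic incarnation of the unfolding step used archimedeanly in \cite{LM23}, and the proof is essentially forced once \cref{pro:PAdicSobolev} and \cref{mixing_1} are in place.

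Fix an integer scale $r$ with $p^{-r} \leq \eta_X$, to be optimised at the end. Let $\phi$ be a multiple of the characteristic function of $K[r].x \subset X$, normalised so that $\int_X \phi\, d\mu_X = \vol(X)$. Since $\phi$ is $K[r]$-invariant and supported in a single injective ball of radius $p^{-r}$, the definition of the Sobolev norm combined with (S2) yields $\mathcal{S}(\phi) \ll p^{C_{*}r}$ for some exponent $C_*$ depending only on $d$ and $\dim X$. Property (S3) then produces, uniformly in $n \in B_N(0,1)$,
\begin{equation*}
f(d_\lambda n x) \;=\; \vol(X)^{-1}\langle f,\, d_\lambda n \cdot \phi \rangle \;+\; O\bigl(p^{-r}\mathcal{S}(f)\bigr).
\end{equation*}
Integrating in $n$ and setting $\Phi := \int_{B_N(0,1)} n \cdot \phi\, dn$, Fubini together with $N$-invariance of $\mu_X$ gives $\int \Phi\, d\mu_X = \vol(X)$, while (S2) yields $\mathcal{S}(\Phi) \ll p^{C_{*}r}$. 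Decomposing $f = \bar f + f_0$ and $\Phi = \bar\Phi + \Phi_0$ into their means and $L^2_0$-parts, and using that $d_\lambda$ preserves $\mu_X$,
\begin{equation*}
\vol(X)^{-1} \langle f,\, d_\lambda \Phi\rangle \;=\; \int f\, d\mu_X \;+\; \vol(X)^{-1} \langle f_0,\, d_\lambda \Phi_0\rangle.
\end{equation*}
The argument proving \cref{cor:mixing_2} applies verbatim with $u_r$ replaced by $d_\lambda$ and $(1+|r|_p)$ replaced by $|\lambda|_p$, hence $|\langle f_0, d_\lambda \Phi_0\rangle| \ll \mathcal{S}(f)\mathcal{S}(\Phi)|\lambda|_p^{-\kappa_2} \ll p^{C_{*}r}\mathcal{S}(f)|\lambda|_p^{-\kappa_2}$.

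Combining these estimates, the total error is $O\bigl((p^{-r} + p^{C_{*}r}|\lambda|_p^{-\kappa_2})\mathcal{S}(f)\bigr)$. Balancing by choosing $r = \lfloor \tfrac{\kappa_2}{C_*+1}\log_p|\lambda|_p\rfloor$ yields the desired bound with $\kappa_3 = \kappa_2/(C_* + 1)$; the factor $\eta_X^{-*}$ in $C_4$ appears because the regularising scale must be smaller than $\eta_X$, and the factor $\vol(X)$ appears through the renormalisation of $\phi$. The only mildly technical point — and in this sense the ``main obstacle'' — is verifying the polynomial growth $\mathcal{S}(\phi),\mathcal{S}(\Phi) \ll p^{C_{*}r}$ in the fine scale; this is immediate from the $K[r]$-invariance of $\phi$ together with properties (S2)--(S4), so no ideas beyond the Sobolev package are required.
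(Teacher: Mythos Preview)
Your overall strategy --- thicken the base point, unfold the orbit integral to a matrix coefficient, apply mixing, optimise --- is the same as the paper's. However, there is a genuine gap in the approximation step. You claim that (S3) gives, uniformly in $n \in B_N(0,1)$,
\[
f(d_\lambda n x) \;=\; \vol(X)^{-1}\langle f,\, d_\lambda n \cdot \phi\rangle \;+\; O\bigl(p^{-r}\mathcal{S}(f)\bigr).
\]
Unwinding, the inner product on the right is an average of $f(d_\lambda n k x)$ over $k \in K[r]$, so the claimed bound amounts to $|f(d_\lambda n x) - f(d_\lambda n k x)| \ll p^{-r}\mathcal{S}(f)$ for every $k\in K[r]$. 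Writing $d_\lambda n k x = g' \cdot d_\lambda n x$ with $g' = d_\lambda n k n^{-1} d_\lambda^{-1} \in d_\lambda K[r] d_\lambda^{-1}$ (since $n\in K$ normalises $K[r]$), property (S3) would apply only if $g'\in K[r]$. But conjugation by $d_\lambda$ \emph{expands} the $N$-component of $K[r]$ by a factor $|\lambda|_p^{2}$; with your eventual choice $p^r \asymp |\lambda|_p^{\kappa_2/(C_*+1)}$ one has $|\lambda|_p^{2}p^{-r}\gg 1$, so $g'$ need not lie in $K$ at all. Thus the displayed approximation is false as stated, and the error term you carry into the final optimisation is illegitimate.

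The paper avoids exactly this issue by thickening \emph{only} along the stable and central directions $N^{-}$ and $D_G$, which $d_\lambda$ contracts (resp.\ fixes): each horospherical piece $\varphi_j^{+}$ is fattened to a function $\varphi_j$ on $X$ supported on a set of the form $B_{N^{-}}^{|\lambda|_p^{-\kappa}}B_{D_G}^{|\lambda|_p^{-\kappa}}\cdot(\text{$N$-box})\cdot x$, and then (S3) legitimately yields the $O(|\lambda|_p^{-\kappa}\mathcal{S}(f))$ comparison. Your argument can be repaired either by making this same anisotropic choice of bump, or by first absorbing the $N$-component of $K[r]$ into the $dn$-integral via a change of variables \emph{before} invoking (S3); but as written the key step does not go through.
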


\begin{proof}
    This statement is well known in many similar cases, see e.g \cite{BO12}, \cite[Proposition 4.1]{LM23}. We include the argument for convenience. 

    Let $\varphi^{+}$ be the indicator function on $B_N(0, 1)$. We could write $\varphi^{+} = \mathds{1}_{\mathbb{Z}_p^2} = \sum_{j = 0}^{p^{2n_0} - 1} \mathds{1}_{j + p^{n_0}\mathbb{Z}_p^2}$. Set $\varphi_j^{+} = \mathds{1}_{j + p^{n_0}\mathbb{Z}_p^2}$. Let $\kappa$ be some parameter we will optimize later. 

    Let $\varphi_j$ be an $|\lambda|_p^{-\kappa}$-thickening of $\varphi_j^{+}$ along the stable and central directions in $G$. Namely, $\varphi_j$ is the indicator function of $B_{N^{-}}^{|\lambda|_p^{-\kappa}}B_{D_G}^{|\lambda|_p^{-\kappa}} B_{N^{+}}^{\eta_X}. x$. 

    Note that $\mathcal{S}(\varphi_j) \ll \eta_X^{*}|\lambda|_p^{*\kappa}$

    By S3, we have
    \begin{align*}
        \biggl|\int_N f(d_{\lambda}n.x) \varphi_j^{+}(n) \,dn - \int_X f(d_{\lambda}y) \varphi_j(y) \,d\mu_X(y)\biggr| \ll \mathcal{S}(f) |\lambda|_p^{-\kappa}. 
    \end{align*} 

    Using Corollary \ref{cor:mixing_2}, we have, 
    \begin{align}
        \biggl|\int_X f(d_{\lambda}y) \varphi_j \,d\mu_X(y) - \int f \,d\mu_X \int \varphi_j \,d\mu_X\biggr| \ll{}& \mathcal{S}(f)\mathcal{S}(\varphi_j) |\lambda|_p^{-\kappa_1}\\
        \ll{}& \mathcal{S}(f) \eta_X^{-*} |\lambda|_p^{*\kappa}|\lambda|_p^{-\kappa_1}. 
    \end{align}
    The proposition follows by summing those $\eta_X^{-2}$ error terms and optimizing $\kappa$. 
\end{proof}

The following is a generalization of proposition 4.1 which replace the whole $B_{N}(0, 1)$ by certain subset with dimension close to $2$. This theorem is a $p$-adic analogue to \cite[Proposition 4.2]{LM23}. 

\begin{theorem}\label{thm:VenkateshTrick}
    There exists $\kappa_4$ and $\epsilon_0$ (both $\gg \kappa_2$) so that the following holds. Let $0 \leq \epsilon \leq \epsilon_0$ and $0 < b \leq 1/p^2$. Let $\rho$ be a probability measure on $\mathbb{Z}_p$ which satisfies 
    \begin{align}\label{eqn:VenkateshInputDimension}
        \rho(K) \leq Cb^{1 - \epsilon}
    \end{align}
    for all $K$ which is a ball of radius $b$ and a constant C. 
    Then, 
    \begin{align*}
    \biggl|\int_{\mathbb{Z}_p}\int_{\mathbb{Z}_p} f(d_{\lambda}u_r v_s.x ) \,dr \,d\rho(s) - \int f \,d\mu_X\biggr| \leq C_5 C \mathcal{S}(f) |\lambda|_{p}^{-\kappa_4}
    \end{align*}
    for all $b^{-\frac{1}{8}} \leq |\lambda|_p \leq b^{-\frac{1}{4}}$. 
    The constant $C_5 \ll \mathrm{Vol}(X) \eta_X^{-*}$. 
\end{theorem}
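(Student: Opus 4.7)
The approach is a $p$-adic adaptation of the Venkatesh-type argument from \cite[Proposition 4.2]{LM23}: use the fractal dimension hypothesis on $\rho$ to bring the problem into a form to which the full-horospherical equidistribution of \cref{pro:full-horospherical} applies. Set $F(s) := \int_{\mathbb{Z}_p} f(d_\lambda u_r v_s x)\,dr$, so the goal becomes
\[
\Bigl|\int F(s)\,d\rho(s) - \int f\,d\mu_X\Bigr| \ll \mathcal{S}(f)\, C\, |\lambda|_p^{-\kappa_4}.
\]
Fubini combined with \cref{pro:full-horospherical} already gives $\int F(s)\,ds = \int f\,d\mu_X + O(\mathcal{S}(f)|\lambda|_p^{-\kappa_3})$, so the real task is the comparison between $\int F\,d\rho$ and $\int F\,ds$.

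The commutation $d_\lambda v_{s-s'} = v_{\lambda^2(s-s')} d_\lambda$ together with property \textbf{(S3)} yields the effective Lipschitz estimate $|F(s) - F(s')| \ll |\lambda|_p^2\,|s-s'|_p\,\mathcal{S}(f)$, so $F$ is essentially constant on scales below $|\lambda|_p^{-2}$. I would partition $\mathbb{Z}_p$ into balls $\{B_j\}$ of radius $b' = |\lambda|_p^{-2+c_0}$ for a small $c_0 > 0$; the fractal hypothesis, which applies at scale $b' \geq b$ under the range $|\lambda|_p \leq b^{-1/4}$, gives $\rho(B_j) \leq C(b')^{1-\epsilon}$. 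The Lipschitz estimate justifies replacing $F$ by $F(s_j)$ on each $B_j$, at aggregate cost $O(|\lambda|_p^{-c_0}\mathcal{S}(f))$. To evaluate $\sum_j \rho(B_j) F(s_j)$, perform the change of variable $t = \lambda^2(s-s_j)$ on $B_j$, which pushes $\rho|_{B_j}$ forward to a measure supported in $B(0,|\lambda|_p^{c_0})\subset\mathbb{Q}_p$, and decompose this image into $\mathbb{Z}_p$-cosets. Each coset, combined with the full $r$-integration over $\mathbb{Z}_p$, produces a $B_N(0,1)$-type integral with a base point shifted by some $u_{k_r} v_{k_s}$; to this, \cref{pro:full-horospherical} applies and yields the main term $\int f\,d\mu_X$, weighted by $\rho(B_j)$. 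Summing over cosets and over $j$ recovers $(\sum_j \rho(B_j)) \int f = \int f$ at leading order.

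The main obstacle is balancing three error sources into a single polynomial saving $|\lambda|_p^{-\kappa_4}$: the Lipschitz smoothing cost $|\lambda|_p^{-c_0}$, the fractal-dimension loss $\ll (b')^{-\epsilon}$ incurred when comparing the coset-restricted pushforward to Lebesgue, and the mixing rate $|\lambda|_p^{-\kappa_3}$ coming from \cref{pro:full-horospherical}. Taking $c_0$ small and $\epsilon_0 \ll \kappa_3/4$ ensures that a positive exponent $\kappa_4$ survives. A delicate point is that the pushforward of $\rho|_{B_j}$ onto a single $\mathbb{Z}_p$-coset inherits part of the fractal structure of $\rho$, so the intra-coset step must itself be handled by a further Sobolev smoothing whose $(b')^{-\epsilon}$ defect is absorbed by the mixing factor, using essentially the same estimate from \cref{cor:mixing_2} that underpinned \cref{pro:full-horospherical}. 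The constraint $b^{-1/8} \leq |\lambda|_p \leq b^{-1/4}$ is precisely what makes both the fractal hypothesis available at the intermediate scale $b'$ and the expansion factor $|\lambda|_p^{c_0}$ large enough for the coset decomposition to be meaningful.
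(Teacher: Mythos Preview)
Your outline has a sign error that is symptomatic of a deeper problem. With $b' = |\lambda|_p^{-2+c_0}$ and $c_0>0$, the Lipschitz cost of replacing $F(s)$ by $F(s_j)$ on a ball of radius $b'$ is $|\lambda|_p^{2}\cdot b' = |\lambda|_p^{c_0}$, which \emph{grows}, not $|\lambda|_p^{-c_0}$. If you repair this by taking $b' = |\lambda|_p^{-2-c_0}$, the rescaled variable $t=\lambda^2(s-s_j)$ lands inside $\mathbb{Z}_p$ and your coset decomposition collapses to a single coset; you are then left with $\sum_j \rho(B_j)\,F(s_j)$, where each $F(s_j)=\int_{\mathbb{Z}_p} f(d_\lambda u_r v_{s_j}x)\,dr$ is only a one-parameter $U$-average, to which \cref{pro:full-horospherical} does not apply. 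Either choice of sign leaves you stuck with a fractal measure in the $v$-direction and no full $B_N(0,1)$-integral. The passage you flag as ``delicate'' --- handling the pushforward of $\rho|_{B_j}$ onto a coset by ``further Sobolev smoothing'' --- is the whole problem restated, not solved: smoothing a fractal measure to Lebesgue at unit scale costs the entire main term.

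The paper's proof avoids this by a van der Corput step you do not have. After replacing $\rho$ by the piecewise-constant density $\varphi = \sum_j b^{-1}c_j \mathds{1}_{\mathbb{Z}_p\times I_j}$ (at scale $b$, not $|\lambda|_p^{-2}$), one exploits the $U$-invariance $u_r(\mathbb{Z}_p\times I_j)=\mathbb{Z}_p\times I_j$ to insert an extra average over $\{u_r:|r|_p\le\tau\}$ with $\tau=|\lambda|_p^{-(2-1/l)}$, and then applies Cauchy--Schwarz. This is the key move: after Cauchy--Schwarz the fractal hypothesis enters only through the pointwise bound $\varphi \le C b^{-\epsilon}$, which converts the $\varphi\,dn$ integration into honest Lebesgue $dn$ on $B_N(0,1)$ at the price of a single factor $Cb^{-\epsilon}$. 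Now \cref{pro:full-horospherical} applies to the bilinear integrand $\hat f_{r_1,r_2}$, and the resulting correlation $\int_X \hat f_{r_1,r_2}\,d\mu_X$ is controlled by \cref{cor:mixing_2} when $|r_1-r_2|_p$ is not too small. The constraints $b^{-1/8}\le|\lambda|_p\le b^{-1/4}$ are what make $b^{-\epsilon}$ absorbable by the mixing gain and the Sobolev norm of $\hat f_{r_1,r_2}$ controllable. Without the Cauchy--Schwarz step there is no mechanism to trade the fractal measure for a scalar loss, and your argument cannot close.
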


\begin{proof}
    Without loss of generality, we may assume $\int_X f dm_X = 0$. 

    Suppose $b = p^{-m_0}$, let $\mathbb{Z}_p = \bigsqcup_j a_j + p^{m_0}\mathbb{Z}_p$. Let $I_j = s_j + p^{m_0}\mathbb{Z}_p$, $c_j = \rho(I_j)$ for all $j$. Then $\sum_j c_j = 1$. 

    Let $B_j = \mathbb{Z}_p \times I_j$. Let $\varphi = \sum_j b^{-1} c_j \mathds{1}_{B_j}$. Using \cref{pro:PAdicSobolev}~(S3), we have  

    \begin{align*}
        &\biggl|\int_{\mathbb{Z}_p}\int_{\mathbb{Z}_p} f(d_{\lambda}u_rv_s.x) \,dr \,d\rho(s) - \sum_j c_j \int f(d_{\lambda}u_r v_{s_j}.x) \,dr\biggr|\\
        & \leq \sum_j \int_{I_j}\int |f(d_{\lambda}u_r v_s.x) - f(d_{\lambda}u_r v_{s_j}.x)| \,dr\, d\rho(s) \ll \mathcal{S}(f)b^{\frac{1}{2}}. 
    \end{align*}

    where we used the fact that $|\lambda^2|_p|s - s_j|_p \leq b^{-\frac{1}{2}}b = b^{\frac{1}{2}}$ in the last inequality. 

    Note that 
    \begin{align*}
        &\biggl|\sum_j c_j \int f(d_{\lambda}u_r v_{s_j}.x) \,dr - \int_N \varphi(n(r, s)) f(d_{\lambda}n(r, s).x) \,dr\,ds\biggr|\\ 
        \leq& \sum_j \int_{\mathbb{Z}_p} b^{-1} c_j \int_{I_j} \bigl|f(d_{\lambda} n(r, s_j). x) - f(d_{\lambda} n(r, s). x)\bigr| \,ds \,dr  \ll \mathcal{S}(f)b^{\frac{1}{2}}
    \end{align*}

    where we used the fact that $|\lambda^2|_p|s - s_j|_p \leq b^{\frac{1}{2}}$ again in the last inequality. 
    
    Therefore, it suffices to estimate

    \begin{align*}
        A = \int \varphi(n(r, s))f(d_{\lambda}n(r, s).x) \,dr \, ds. 
    \end{align*}

    Let $l \geq 2$ be a parameter which will be optimize later. Let $\tau = |\lambda|_p^{-(2 - \frac{1}{l})}$. Since $B_j = \mathbb{Z}_p \times I_j$, $u_rB_j = B_j$ for all $|r|_p \leq 1$. 

    Thus, 
    \begin{align*}
        A &= \int \varphi(n)f(d_{\lambda}n.x) \,dn \\
        &= \sum_j b^{-1}c_j\int_{B_j} f(d_{\lambda}n).x) \,dn\\
        &= \sum_j b^{-1}c_j\int_{B_j} f(d_{\lambda}u_r n).x) \,dn\\
        &= \frac{1}{\tau} \int_{|r|_p \leq \tau} \int \varphi(n)f(d_{\lambda}u_r n.x) \,dn \,dr.
    \end{align*}

By Cauchy-Schwarz inequality, we have

\begin{align*}
    |A|^2 \leq \int\biggl(\frac{1}{\tau} \int_{|r|_p \leq \tau} f(d_{\lambda}u_r n.x) \,dr\biggr)^2 \varphi(n) \,dn
\end{align*}

Since $c_j = \rho (I_j) \leq Cb^{1-\varepsilon}$, we have
\begin{align*}
    |A|^2 &\leq C b^{-\varepsilon} \int (\frac{1}{\tau} \int_{|r|_p \leq \tau} f(d_{\lambda}u_r n.x) \,dr)^2 \,dn\\
    &= \frac{1}{\tau^2} \int_{|r_1|_p \leq \tau} \int_{|r_2|_p \leq \tau} \int Cb^{-\varepsilon}\hat{f}_{r_1, r_2}(d_{\lambda}n. x)\,dn \,dr_1 \,dr_2. 
\end{align*}
where $\hat{f}_{r_1, r_2}(y) = f(d_{\lambda}u_{r_1}d_{\lambda^{-1}}.y)f(d_{\lambda}u_{r_2}d_{\lambda^{-1}}.y)$ for $|r_1|_p, |r_2|_p \leq \tau$. 

By S4, $\mathcal{S}(\hat{f}_{r_1, r_2}) \ll \mathcal{S}(f)^2(|\lambda|_p^2\tau)^{*} \ll \mathcal{S}(f)^2|\lambda|_p^{*/l}$. We choose $l \ll \frac{1}{\kappa_3}$ large enough so that 
\begin{align*}
    \mathcal{S}(\hat{f}_{r_1, r_2}) \ll \mathcal{S}(f)^2|\lambda|_p^{\kappa_3/2}.
\end{align*}

By proposition \ref{pro:full-horospherical}, we have 
\begin{align*}
    \biggl|b^{-\varepsilon} \int \hat{f}_{r_1, r_2}(d_{\lambda}n.x) \,dn\biggr| &= b^{-\varepsilon} \int_X \hat{f}_{r_1, r_2} \,d\mu_X + b^{-\varepsilon}\mathcal{O}(\mathcal{S}(\hat{f}_{r_1, r_2})|\lambda|_p^{-\kappa_3})\\
    &= b^{-\varepsilon} \int_X \hat{f}_{r_1, r_2} \,d\mu_X + b^{-\varepsilon}\mathcal{O}(\mathcal{S}(f)^2|\lambda|_p^{-\kappa_3/2}). 
\end{align*}

Since $b^{-\frac{1}{8}} \leq |\lambda|_p \leq b^{-\frac{1}{4}}$, if we choose $\varepsilon \leq \kappa_3/32$, then $b^{-\varepsilon} |\lambda|_p^{-\kappa_3/2} \leq b^{\kappa_3/32}$. 

Hence, 
\begin{align}
    \biggl|b^{-\varepsilon} \int \hat{f}_{r_1, r_2}(d_{\lambda}n.x) \,dn\biggr| = b^{-\varepsilon} \int_X \hat{f}_{r_1, r_2} \,d\mu_X + \mathcal{O}(\mathcal{S}(f)^2 b^{\kappa_3/32}). 
\end{align}

Using corollary \ref{cor:mixing_2}, we obtain the following bound if $|r_1 - r_2|_p > |\lambda|_p^{-2 + \frac{1}{2l}}$

\begin{align}
    \biggl|\int_X \hat{f}_{r_1, r_2} \,d\mu_X\biggr| \ll \mathcal{S}(f)^2 |\lambda|_p^{-\frac{\kappa_2}{2l}}. 
\end{align}

    Thus, we have 

    \begin{align*}
        |A|^2 \ll C\mathcal{S}(f)^2(b^{-\varepsilon}(|\lambda|_p^{-\frac{1}{2l}} + |\lambda|_p^{-\kappa_2/2l}) + b^{\kappa_3/32}). 
    \end{align*}

Note that $\kappa_3 \gg \kappa_2$, $l \ll \frac{1}{\kappa_3}$ if $\varepsilon \ll \kappa_3^2$, then altogether we finish the proof. 
    
\end{proof}

\section{A restricted projection theorem}\label{sec:proj}
In this section, we will prove the following proposition serves as an input of \cref{thm:VenkateshTrick}. This section is similar to \cite[Section 5]{LM23} while we change the restricted projection theorem to its analogue in $\Q_p^3$.

\begin{proposition}\label{pro:RestrictedProjMain}
    Let $0 < 10^{70}\epsilon < \alpha < 1$. Suppose there exists $x_1 \in X$ and $F \subset B_{\mathfrak{r}}(0, 1)$, containing $0$ such that
    \begin{align}\label{eqn:ProjDimensionEnergy}
        \sum_{w' \in F \backslash \{w\} } \|w' - w\|^{-\alpha} \leq D (\#F)^{1 + \epsilon} \text{ for all } w \in F,
    \end{align}
    for some $D \geq 1$. 

    Assume further that $\#F$ is large enough, depending explicitly on $\epsilon$. 

    Then there exists a finite set $I \subset \Z_p$, some $b_1 = p^{-l_1}$ with 
    \begin{align*}
        (\#F)^{-\frac{3 - \alpha + 5\epsilon}{3 - \alpha + 20 \epsilon}} \leq p^{-l_1} \leq (\#F)^{-\epsilon},
    \end{align*}
    and some $x_2 \in X$ so that the following statements hold. 
    \begin{enumerate}
        \item The set $I$ supports a probability measure $\rho$ which satisfies
        \begin{align*}
            \rho(J) \leq C_{\epsilon}' \cdot |J|^{\alpha - 30\epsilon}
        \end{align*}
        for all closed subgroup $J$ with $|J| \geq (\#F)^{-\frac{-15\epsilon}{3 - \alpha + 20\epsilon}}$, where $C_{\epsilon}'$ depends only on $\epsilon$ and $D$. 
        \item Let $N = \lceil\frac{l_1}{2}\rceil$. For all $s \in I$, we have
        \begin{align*}
            v_s.x_2 \in K[l_1] \cdot a_{N} \{u_r: r \in \Z_p\}.F.x_1.
        \end{align*}
    \end{enumerate}
\end{proposition}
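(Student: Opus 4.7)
My plan is to follow the $p$-adic analogue of the strategy of \cite[Section~5]{LM23}: first reduce $F$ to a single-scale Frostman subset, then apply the restricted projection theorem~\cref{thm:ProjDecouplingIntro} to produce a one-dimensional Frostman shadow of $F$ under some $\xi_{r_*}$, and finally use the explicit action of $a_N$ to convert that shadow into $V$-translates in $X$. In more detail: the energy bound \eqref{eqn:ProjDimensionEnergy} is an averaged Frostman condition, so a standard dyadic pigeonhole and scale-selection argument produces a scale $b_1 = p^{-l_1}$ in the stated range, a lower threshold $b_0$ with $b_0/b_1 \asymp (\#F)^{-15\epsilon/(3-\alpha+20\epsilon)}$, and a subset $E \subset F$ which (after translating by the center $w_0 \in F$ of the selected sub-ball, replacing $x_1$ by $\exp(w_0).x_1$ and absorbing the BCH error via~\cref{lem:BCH}) lies in $B_{\mathfrak{r}}(0,b_1)$, contains $0$, has $\#E \gg (\#F)^{1-O(\epsilon)}$, and satisfies the hypothesis of~\cref{thm:ProjDecouplingIntro} with some $D' \lesssim_\epsilon D$. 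I then apply~\cref{thm:ProjDecouplingIntro} with $J = \Z_p$, choose any $r_* \in J'$, and obtain $E_{r_*}\subset E$ of density $\ge 1-1/p$ on which $\xi_{r_*}$ is Frostman with exponent $\alpha-\epsilon$ down to scale $b_0$.

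\emph{Construction of $I$, $\rho$, $x_2$ and verification of Item~(1).}  I set $N := \lceil l_1/2 \rceil$, $x_2 := a_N u_{r_*}.x_1$, and, for each $w\in E_{r_*}$, $s_w := p^{-2N}\xi_{r_*}(w)$. Since $|\xi_{r_*}(w)|_p \le b_1$ and $p^{2N-l_1} \asymp 1$, the $s_w$ lie in $\Z_p$ (after a harmless adjustment by a factor of $p$ if $l_1$ is odd). Take $I := \{s_w : w\in E_{r_*}\}$ and let $\rho$ be the push-forward of the uniform probability measure on $E_{r_*}$ under $w\mapsto s_w$. For any ball $J \subset \Z_p$ with $|J| \ge b_0 p^{2N}$,
\[
\rho(J) \;=\; \frac{\#\{w\in E_{r_*} : \xi_{r_*}(w)\in p^{2N}J\}}{\#E_{r_*}} \;\le\; C_\epsilon \left(\frac{p^{-2N}|J|}{b_1}\right)^{\alpha-\epsilon} \;\lesssim_\epsilon\; |J|^{\alpha-\epsilon},
\]
using $p^{-2N}/b_1 \asymp 1$. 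Collecting the $\epsilon$-slacks from the previous paragraph yields Item~(1) with exponent $\alpha-30\epsilon$.

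\emph{Verification of Item~(2).}  Since $\mathfrak{r}$ is an ideal of $\LieG$, for $w\in E_{r_*}$ one has $a_N u_{r_*}\exp(w).x_1 = \exp(\Ad(a_N u_{r_*})w).\,x_2$, and a direct computation using the formula for $\Ad_{u_r}$ in the introduction together with $\Ad(a_N) E_{12} = p^{-2N}E_{12}$ and $\Ad(a_N) E_{21} = p^{2N}E_{21}$ yields
\[
\Ad(a_N u_{r_*})w = \begin{pmatrix} w_{11}+w_{21}r_* & s_w \\ p^{2N}w_{21} & -(w_{11}+w_{21}r_*) \end{pmatrix} \in \mathfrak{r}.
\]
For $w\in B_{\mathfrak{r}}(0,b_1)$ the $(1,1)$-entry has $p$-adic norm $\le b_1 = p^{-l_1}$ and---the crucial contraction of the $E_{21}$-direction by $a_N$---the $(2,1)$-entry has norm $\le p^{-2N}b_1 \ll p^{-l_1}$. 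Splitting the matrix as $X_V + Y$, where $X_V$ has only the $(1,2)$-entry $s_w$ and $Y$ collects the remaining entries (so $\|Y\|_p \le p^{-l_1}$), a direct computation using $X_V^2 = 0$ and $\|Y\|_p \le p^{-l_1}$ factors $\exp(X_V+Y) = v_{s_w}\cdot k_w$ with $k_w \in K[l_1-C]$. Since $|s_w|_p \lesssim 1$, conjugation by $v_{s_w}$ preserves $K[l_1-C]$ up to a further constant loss, so
\[
v_{s_w}.x_2 \;=\; \bigl(v_{s_w} k_w v_{-s_w}\bigr)\cdot a_N u_{r_*}\exp(w).x_1 \;\in\; K[l_1-C']\cdot a_N u_{r_*}\exp(w).x_1,
\]
which is Item~(2) after absorbing $C'$ into $l_1$.

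\emph{Main obstacle.}  What drives the argument is that $a_N$ contracts the nuisance $E_{21}$-direction by the same factor ($p^{-2N}$) by which it expands $V$, so only the projection $\xi_{r_*}$ survives modulo $K[l_1]$, converting the large ``dimension'' of $F$ into a one-dimensional Frostman family of $V$-translates. The main technical difficulty is bookkeeping the four sources of $\epsilon$-loss---scale selection, the $-\epsilon$ loss in~\cref{thm:ProjDecouplingIntro}, the BCH factorization, and the conjugation $v_{s_w}K[l_1]v_{-s_w}$---so that the total stays within the allowed $30\epsilon$ slack and the stated scale range for $b_1$.
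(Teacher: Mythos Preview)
Your proposal is correct and follows essentially the same route as the paper: localize via the $p$-adic Bourgain/scale-selection lemma (the paper's \cref{lem:Bourgain}), shift the base point by $w_0$ using \cref{lem:BCH}, apply the restricted projection theorem, and read off both items from the explicit formula for $\Ad(a_N u_{r_*})w$.

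The one visible difference is that the paper inserts an extra ``shearing'' step (\cref{lem:ShearingInProj}): it first replaces $E$ by a subset of $\Ad_{u_{r_0}}E$ on which $|w_{12}|_p \ge p^{-4}\|w\|_p$, and only then applies \cref{thm:ProjDecoupling2} with the small ball $J = p^5\Z_p$. You skip this and apply the projection theorem directly with $J = \Z_p$. Since $\Ad_{u_{r_0}}\circ\Ad_{u_r} = \Ad_{u_{r+r_0}}$, the two procedures produce the same family of projections; the paper is just decomposing the choice of $r_*$ into a coarse part $r_0$ (making the $(1,2)$-entry visible) and a fine part in $p^5\Z_p$. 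For the proposition \emph{as stated} neither the Frostman bound in item~(1) nor the size estimates $|(\Ad_{a_N u_{r_*}}w)_{11}|_p \le b_1$, $|(\Ad_{a_N u_{r_*}}w)_{21}|_p \le p^{-2N}b_1$ used for item~(2) require the lower bound on $|\xi_{r_*}(w)|_p$ that the shearing produces, so your streamlined version is fine. One minor caution: in your factorization $\exp(X_V+Y) = v_{s_w}k_w$, the usual BCH convergence hypothesis (\cref{lem:BCH}) does not apply since $\|X_V\|_p$ can be of size $1$; your use of $X_V^2 = 0$ to do this by hand is exactly what is needed, but be sure to track the factorial denominators (they are harmless since $p>3$ and every surviving word of length $k$ contains at least $\lfloor k/2\rfloor$ copies of $Y$).
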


\begin{remark}\label{rem:ProjConst}
    Here we discuss the estimate on the estimate on $C_{\epsilon}'$. We have
    \begin{align*}
        C_{\epsilon}' \ll D^{*} K^{K^{\frac{1}{\epsilon^2}}}
    \end{align*}
    for some absolute constant $K > 1$. 
    We remark here that in \cite{LM23}, using the restricted projection theorem in \cite{KOV21}, the corresponding constant has a better range $\ll \epsilon^{-*}$. 
\end{remark}

The proof of \cref{pro:RestrictedProjMain} is based on the following restricted projection proved in \cite{JL}. Its proof is based on a decoupling inequality for moment curve in $\Q_p^n$. 

\begin{theorem}\label{thm:ProjDecoupling2}
    Let $0 < \alpha < 1$, $0 < b_0 = p^{-l_0} < b_1 = p^{-l_1} < 1$ be three parameters. Let $E \subset B_{\mathfrak{r}}(0, b_1)$ be so that 
    \begin{align*}
        \frac{\#(E \cap B_{\mathfrak{r}}(w, b))}{\#E} \leq D'\cdot (\frac{b}{b_1})^{\alpha}
    \end{align*}
    for all $w \in \mathfrak{r}$ and all $b \geq b_0$, and some $D' \geq 1$. Let $0 < \epsilon < 10^{-70}$ and let $J$ be a ball in $\Z_p$. Let $\xi_r$ be the following map: 
    \begin{align*} 
        \xi_r(w) = \bigl(\Ad_{u_r}(w)\bigr)_{12} = w_{12} - 2w_{11} r - w_{21} r^2. 
    \end{align*}
    There exists $J' \subset J$ such that $|J'| \geq (1 - \frac{1}{p})|J|$ satisfying the following. Let $r \in J'$, then there exists a subset $E_r \subset E$ with 
    \begin{align*}
        \#E_r \geq (1 - \frac{1}{p}) \cdot (\#E)
    \end{align*}
    such that for all $w \in E_r$ and all $b \geq b_0$, we have
    \begin{align*}
        \frac{\#\{w' \in E: |\xi_r(w') - \xi_r(w)|_p \leq b\}}{\#E} \leq C_{\epsilon}\cdot(\frac{b}{b_1})^{\alpha - \epsilon}.
    \end{align*}
    where $C_{\epsilon}$ depends on $\epsilon$, $|J|$, $D'$ and could be chosen as in \cref{rem:ProjConst}. 
\end{theorem}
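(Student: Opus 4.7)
The plan is to adapt the Gan--Guo--Wang argument (\cite{GGW24}) to the $p$-adic setting, using the $\ell^2$-decoupling inequality for the moment curve in $\Q_p^n$ established in \cite{JL} as the main analytic input. Writing $w = (w_{11}, w_{12}, w_{21})$, we have $\xi_r(w) = \langle w, \gamma(r)\rangle$ with $\gamma(r) = (-2r, 1, -r^2)$. This $\gamma$ is affinely equivalent to the moment curve $r \mapsto (r, r^2)$ in $\Q_p^3$; in particular the Wronskian of $\gamma, \gamma', \gamma''$ is a nonzero constant, so $\gamma$ is non-degenerate, which is the structural hypothesis required for decoupling.

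For each dyadic scale $b \in [b_0, b_1]$ set
\begin{equation*}
F_{r,b}(w) := \frac{\#\{w' \in E : |\xi_r(w') - \xi_r(w)|_p \leq b\}}{\#E}.
\end{equation*}
By a two-step Chebyshev argument applied first in $r$ and then in $w$, combined with dyadic summation over $b$, the pointwise conclusion reduces to an averaged bound of the form
\begin{equation*}
\frac{1}{|J|} \int_J \frac{1}{\#E}\sum_{w \in E} F_{r,b}(w)^q \, dr \ll_\epsilon (b/b_1)^{q(\alpha - \epsilon/2)}
\end{equation*}
for a suitably large exponent $q$. Expanding the $q$-th moment turns the left side into a count of $(q+1)$-tuples $(w_0, \ldots, w_q) \in E^{q+1}$ whose pairwise differences lie in a common $b$-sublevel set of $\xi_r$, averaged in $r$. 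On the Fourier side this average is controlled by the extension operator attached to $\gamma$ at scale $\delta := b/b_1$; covering $J$ by $p$-adic balls of radius $\delta^{1/2}$ (the natural scale at which $\gamma$ is approximately linear) and applying the $\ell^2$-decoupling inequality for $\gamma$ produces a loss of only $\delta^{-\epsilon}$. One then invokes the non-concentration hypothesis $\#(E \cap B_\mathfrak{r}(w, b))/\#E \leq D(b/b_1)^\alpha$ piece-by-piece on the resulting decoupled pieces.

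The proof closes by a multiscale iteration: starting from $b = b_1$, where the desired bound is trivial, and stepping down through $O(\log(b_1/b_0))$ dyadic scales, at each stage one applies the decoupling-plus-non-concentration step and feeds the result into the next iteration. Tracking the constants through the tower of scales produces the $C_\epsilon \ll D^\ast K^{K^{1/\epsilon^2}}$ dependence advertised in \cref{rem:ProjConst}. Lemma \ref{lem:P-adicInterpolation} supplies the clean $p$-adic substitute for the real sublevel estimate for quadratic polynomials, and is what controls the base case $q = 1$ (where $\xi_r(w_1) - \xi_r(w_2) = \langle w_1 - w_2, \gamma(r)\rangle$ is quadratic in $r$).

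The main obstacle is establishing the $\Q_p$-analogue of $\ell^2$-decoupling for the moment curve with constants strong enough to drive this multiscale argument — this is precisely the content of \cite{JL} and is the crux of the whole approach. Once that tool is in hand, the Frostman-type bookkeeping is essentially parallel to the real case treated in \cite{GGW24}, with non-archimedean absolute values replacing archimedean ones throughout and ultrametric balls replacing Euclidean ones in all cover/pigeonhole steps. The final extraction of the exceptional sets $J \setminus J'$ and $E \setminus E_r$, each of measure at most $1/p$, is then an application of Chebyshev's inequality to the averaged estimate.
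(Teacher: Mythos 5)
This statement is not proved in the paper: \cref{thm:ProjDecoupling2} is imported verbatim from the reference \cite{JL}, and the paper's only description of its proof is the single sentence that it ``is based on a decoupling inequality for moment curve in $\Q_p^n$, see \cite{JL}.'' There is therefore no in-paper proof against which to compare your argument at any level of detail. Your outline is consistent with that one-line description and with the Gan--Guo--Wang scheme the paper cites as the model, so you have correctly identified the intended route; but as written the proposal is a plan of attack rather than a proof --- all of the actual analytic work (the $p$-adic $\ell^2$-decoupling inequality, the moment-expansion bookkeeping, and the multiscale induction closing the loss at each dyadic scale) is deferred to citations, and none of it is carried out.

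Two smaller points. First, you invoke \cref{lem:P-adicInterpolation} as ``the clean $p$-adic substitute for the real sublevel estimate'' entering the base case; in this paper that lemma is used only in \cref{sec:ClosingLemma} (to bound $|J_r|$ in the proof of \cref{pro:MainClosingLemma}), and there is no indication from the text that it plays a role in \cite{JL}. That attribution is speculation, not something the paper supports. Second, the phrase ``the moment curve $r\mapsto(r,r^2)$ in $\Q_p^3$'' is imprecise: the dual curve $\gamma(r)=(-2r,1,-r^2)$ is affinely equivalent to the parabola $(1,r,r^2)$ lifted into $\Q_p^3$, not to the degree-three moment curve $(r,r^2,r^3)$. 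Your Wronskian computation is correct ($\det=-4$, a $p$-adic unit for $p>3$), so the non-degeneracy claim stands; and indeed the paper itself remarks that the result could alternatively be obtained via decoupling for the cone over a parabola as in \cite{GGGHMW24}. Just be aware that ``moment curve'' is being used loosely in both the paper and your sketch.
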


We also need the following version of \cite[Lemma 5.3]{LM23}. 

\begin{lemma}\label{lem:Bourgain}
    Let $F \subset B_{\mathfrak{r}}(0, 1)$ satisfying \cref{eqn:ProjDimensionEnergy}. Assuming $\#F$ is large enough depending on $\epsilon$. Then there exist $w_0 \in F$, $b_1 > 0$, with 
    \begin{align*}
        (\#F)^{-\frac{3 - \alpha + 5\epsilon}{3 - \alpha + 20 \epsilon}} \leq b_1 \leq (\#F)^{-\epsilon},
    \end{align*}
    and a subset $F' \subset B(w_0, b_1) \cap F$ so that the following holds. Let $w \in \mathfrak{r}$, and let $b \geq (\#F)^{-1}$. Then 
    \begin{align*}
        \frac{\#(F' \cap B(w, b))}{\#F'} \leq C' \cdot \biggl(\frac{b}{b_1}\biggr)^{\alpha - 20 \epsilon}.
    \end{align*}
    where $C' \ll_D \epsilon^{-*}$ with absolute implied constants. 
\end{lemma}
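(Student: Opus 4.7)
The plan is to adapt the proof of \cite[Lemma 5.3]{LM23} to the $p$-adic setting. That proof is a Bourgain-style multiscale pigeon-hole extraction whose only input is the ambient metric geometry; since $\mathfrak{r} \cong \Q_p^3$ is a metric space with balls of controlled covering numbers and in fact enjoys the ultrametric property, the argument should transfer essentially verbatim (and becomes slightly cleaner because balls in $\Q_p^3$ are either nested or disjoint).

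First I would convert the on-average energy bound \eqref{eqn:ProjDimensionEnergy} into a uniform Frostman-type bound. Markov's inequality applied to \eqref{eqn:ProjDimensionEnergy} yields a subset $F_0 \subset F$ with $\#F_0 \geq \tfrac{1}{2}\#F$ such that the same energy estimate holds at every $w \in F_0$ with $2D$ in place of $D$. Since any $w' \in F$ with $\|w'-w\| \leq b$ contributes at least $b^{-\alpha}$ to the energy sum at $w$, this gives the pointwise estimate $\#(F \cap B(w,b)) \ll D(\#F)^{1+\epsilon} b^{\alpha}$ uniformly for $w \in F_0$ and $b \geq (\#F)^{-1}$. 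This is the right Frostman exponent $\alpha$, but with an extra $(\#F)^{\epsilon}$ loss that must be absorbed by passing to a sub-ball.

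Next I would run the standard multiscale zoom-in: choose the largest scale $b_1$ in the allowed range for which there exists some ball $B(w_0, b_1)$ centered at a point of $F_0$ with $\#(F_0 \cap B(w_0, b_1))$ significantly larger than $b_1^{\alpha - 20\epsilon} (\#F)$, and set $F' = F_0 \cap B(w_0, b_1)$. The tuning of the exponent $\alpha - 20\epsilon$ is precisely what guarantees the desired non-concentration of $F'$ at every scale $b \in [(\#F)^{-1}, b_1]$: if the Frostman bound with this exponent failed inside $B(w_0, b_1)$ at some smaller scale, the witnessing ball would contradict the maximality of $b_1$. The allowed range for $b_1$, namely $(\#F)^{-\frac{3-\alpha+5\epsilon}{3-\alpha+20\epsilon}} \leq b_1 \leq (\#F)^{-\epsilon}$, comes out of balancing the density gain in a single zoom-in step against the total energy budget, and $C' \ll_D \epsilon^{-*}$ accumulates through the finitely many iterated steps.

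The main technical obstacle is the bookkeeping around the exponents: to obtain the loss $\alpha - 20\epsilon$ in the conclusion from a loss of only $\epsilon$ in the hypothesis, one must calibrate the thresholds in the zoom-in so that every non-terminating step produces a definite relative density gain, and verify that the resulting lower bound on $b_1$ matches the claimed value $(\#F)^{-\frac{3-\alpha+5\epsilon}{3-\alpha+20\epsilon}}$ (the fraction $(3-\alpha+5\epsilon)/(3-\alpha+20\epsilon)$ is exactly what comes out of equating the energy-budget exponent $3-\alpha$ plus the $\epsilon$-losses incurred at the top and bottom of the iteration). Once these exponent calculations are reconciled, no new ideas beyond those of \cite[Lemma 5.3]{LM23} are required, because the Euclidean arguments there use only metric-space properties that are valid in $\Q_p^3$.
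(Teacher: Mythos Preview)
Your proposal is correct and takes essentially the same approach as the paper: the paper's proof simply observes that $\Z_p^3$ has a tree structure (balls are nested or disjoint), so one replaces the dyadic cubes in \cite[Appendix C]{LM23} with $p$-adic balls and runs the same Bourgain-style multiscale extraction verbatim. Your sketch of the Markov-to-Frostman step followed by the zoom-in iteration is precisely what that argument does, and your remark about the ultrametric property is exactly the tree structure the paper invokes.
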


\begin{proof}
    Note that $\Z_p^3$ has a tree structure with $\deg = p^3$, replacing the dyadic cubes with balls in $\Z_p^3$, one could prove the lemma exactly the same as \cite[Appendix C]{LM23}. For a comprehensive construction of the subset of $\#F$ with a tree structure, see \cite[Section 2.2]{SG17}. See also \cite[Lemma 5.2]{BFLM11}, \cite[Section 2]{Bou10}, and \cite[Section A.3]{BG09}. We remark here the dependence of $\#F$ on $\epsilon$ could be chosen as
    \begin{align*}
        (\#F)^{\epsilon/2} > 4 \log_p(\#F).
    \end{align*}
\end{proof}

\begin{proof}[Proof of \cref{pro:RestrictedProjMain}]
    The proof is the same as \cite[Section 5]{LM23}. The strategy is straight forward. We first use \cref{lem:Bourgain} to replace $F$ with a local version of it. Then using \cref{thm:ProjDecoupling2}, we project the discretized dimension in $\mathfrak{r}$ to the direction of $\mathfrak{r} \cap \mathrm{Lie}(V)$. Finally, we use the action of $a_{N}$ to expand this subset to size $1$. 

    Assume $\#F$ is large enough depending on $\epsilon$ as the following:
    \begin{align*}
        (\#F)^{\epsilon/2} > \max\{4 \log_p(\#F), \beta_0^{-1}\}
    \end{align*}
    where $\beta_0$ is from \cref{lem:BCH}. 

    \noindent
    \textit{Step 1. Localizing the entropy. }Apply \cref{lem:Bourgain} with $F$ as in the proposition. Let $w_0 \in F$, $b_1 = p^{-l_1}$ and $F' \subset B_{\mathfrak{r}}(w_0, b_1) \cap F$ be given by that lemma; in particular, we have
    \begin{align*}
        (\#F)^{-\frac{3 - \alpha + 5 \epsilon}{3 - \alpha + 20 \epsilon}} \leq b_1 \leq (\#F)^{-\epsilon}.
    \end{align*}

    Now we defined $E$ to be subset of $B_{\mathfrak{r}}(0, b_1)$ to be as such points after changing the base point to $w_0$. Set 
    \begin{align*}
        E = \{w \in \mathfrak{r}: \exp(w) = \exp(w')\exp(-w_0) \text{ for some } w' \in F'\}.
    \end{align*}

    \begin{lemma}\label{lem:ProjChangeBasePoint}
    Let $E = \{w: w' \in F\}$ be as above. Then we have
    \begin{align}\label{eqn:DimensionBeforeShear}
        \frac{\#(E \cap B(w, b))}{\#E} \leq C' \cdot (\frac{b}{b_1})^{\alpha - 20 \epsilon}
    \end{align}
    for all $w \in \mathfrak{r}$ and $b \geq (\#F)^{-1}$ where $C'$ is from \cref{lem:Bourgain}. 
    \end{lemma}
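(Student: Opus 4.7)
The plan is to exhibit $\Phi : F' \to \mathfrak{r}$, defined by $\exp(\Phi(w')) = \exp(w')\exp(-w_0)$, as a distance-preserving bijection from $F'$ onto $E$; once this is established, the dimension estimate for $F'$ furnished by \cref{lem:Bourgain} transfers to $E$ with the same constants.

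Two applications of \cref{lem:BCH} do the work. Since $b_1 \leq (\#F)^{-\epsilon} \leq \beta_0$ by the assumed lower bound on $\#F$, \cref{lem:BCH} applied to the pair $(w', w_0)$ produces a unique $\Phi(w') \in \mathfrak{r}$ with $\|\Phi(w')\|_p = \|w' - w_0\|_p$, so $E = \Phi(F') \subset B_{\mathfrak{r}}(0, b_1)$. To see that $\Phi$ is an isometry, for $w'_1, w'_2 \in F'$ one has
\[
\exp(\Phi(w'_1))\exp(-\Phi(w'_2)) = \exp(w'_1)\exp(-w_0)\exp(w_0)\exp(-w'_2) = \exp(w'_1)\exp(-w'_2),
\]
and a second application of \cref{lem:BCH} to each side produces $u, v \in \mathfrak{r}$ with $\exp(u) = \exp(v)$ and $\|u\|_p = \|\Phi(w'_1) - \Phi(w'_2)\|_p$, $\|v\|_p = \|w'_1 - w'_2\|_p$. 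Injectivity of $\exp$ on the relevant small neighborhood---implicit in the BCH calculation of \cref{lem:BCH}---forces $u = v$, so the norms agree. In particular $\Phi$ is injective and $\#E = \#F'$.

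To finish, let $w \in \mathfrak{r}$ and $b \geq (\#F)^{-1}$. If $E \cap B(w, b)$ is empty the claim is trivial; otherwise pick any $w_* \in E \cap B(w, b)$ and use the ultrametric inequality to replace $B(w, b)$ by $B(w_*, b)$. Writing $w_* = \Phi(w'_*)$, the isometry property yields $\#(E \cap B(w_*, b)) = \#(F' \cap B(w'_*, b))$, and \cref{lem:Bourgain} bounds the right-hand side by $C' \cdot (b/b_1)^{\alpha - 20\epsilon} \#F'$. The only slightly delicate point is the isometry property of $\Phi$---everything downstream is ultrametric bookkeeping---and this ultimately rests on the norm-preservation and injectivity of $\exp$ built into \cref{lem:BCH}, which is why the threshold $\beta_0$ must be reflected in the lower bound on $\#F$.
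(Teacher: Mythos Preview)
Your proof is correct and follows essentially the same approach as the paper: both arguments use \cref{lem:BCH} to show that the change-of-basepoint map $w' \mapsto w$ is a norm-preserving bijection, from which the dimension bound on $F'$ transfers directly to $E$. Your version is in fact more explicit than the paper's---the paper simply asserts that $f$ is a bijection with analytic inverse and then jumps to $\#(f(F') \cap B(\bar{w}, b)) = \#(F' \cap B(f^{-1}(\bar{w}), b))$, whereas your double application of \cref{lem:BCH} to $\exp(\Phi(w'_1))\exp(-\Phi(w'_2)) = \exp(w'_1)\exp(-w'_2)$ makes the isometry property transparent.
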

    We will prove this lemma at the end of this section. 

    By the lemma, we have $E \subset B_{\mathfrak{r}}(0, b_1)$.
    \begin{lemma}\label{lem:ShearingInProj}
    There exists $r_0 \in \Z_p$ and a subset 
    \begin{align*}
        \hat{E} \subset \Ad_{u_{r_0}}E \cap \{w \in B_{\mathfrak{r}}(0, \eta): |w_{12}|_p \geq p^{-4}\|w\|_p\}
    \end{align*}
    so that $\#\hat{E} \geq \frac{\#E}{4}$. 
    \end{lemma}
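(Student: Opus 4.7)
The plan is a short pigeonhole argument over three test values of $r_0$, using that $\Ad_{u_{r_0}}$ preserves the max-norm on $\mathfrak{r}$ when $r_0 \in \Z_p$. Indeed, for $r_0 \in \Z_p$ the element $u_{r_0}$ lies in $K_H$, and the explicit formula for $\Ad_{u_r}$ recorded in the introduction gives $\|\Ad_{u_{r_0}}w'\|_p = \|w'\|_p$ for every $w' \in \mathfrak{r}$ (the upper bound is immediate from $|r_0|_p \leq 1$, and the reverse follows by applying $u_{-r_0}$). Writing $w = \Ad_{u_{r_0}}w'$, its $(1,2)$-entry is exactly $\xi_{r_0}(w') = w'_{12} - 2 w'_{11} r_0 - w'_{21} r_0^2$, so the lemma amounts to finding $r_0$ such that the inequality $|\xi_{r_0}(w')|_p \geq p^{-4}\|w'\|_p$ holds for at least a quarter of the points $w' \in E$.

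The key claim is that \emph{for every} $w' \in E$, at least one of the three values $r_0 \in \{-1, 0, 1\}$ satisfies $|\xi_{r_0}(w')|_p \geq p^{-4}\|w'\|_p$. I would prove this by a Vandermonde inversion: from the identities
\[
\xi_0(w') = w'_{12}, \quad \xi_1(w') - \xi_{-1}(w') = -4 w'_{11}, \quad \xi_1(w') + \xi_{-1}(w') = 2 w'_{12} - 2 w'_{21},
\]
and the fact that $|2|_p = |4|_p = 1$ since $p > 3$, each of $w'_{11}, w'_{12}, w'_{21}$ is a $\Z_p$-linear combination of $\xi_0(w'), \xi_1(w'), \xi_{-1}(w')$. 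If all three $\xi$-values had $p$-adic norm strictly less than $p^{-4}\|w'\|_p$, the ultrametric inequality would then force $\|w'\|_p < p^{-4}\|w'\|_p$, a contradiction.

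With the pointwise claim in hand, pigeonhole on the three-element set $\{-1, 0, 1\}$ produces a single $r_0$ which works for at least $\#E/3 \geq \#E/4$ of the elements of $E$; setting $\hat{E} \subset \Ad_{u_{r_0}}E$ to be the image of this good subset under $\Ad_{u_{r_0}}$ completes the proof. The containment $\hat{E} \subset B_{\mathfrak{r}}(0, \eta)$ is automatic because $E \subset B_{\mathfrak{r}}(0, b_1)$ with $b_1$ smaller than any fixed positive scale by the large-$\#F$ hypothesis, and $\Ad_{u_{r_0}}$ preserves norms. I do not foresee a real obstacle here; an alternative averaging route via the $p$-adic interpolation estimate \cref{lem:P-adicInterpolation}, applied to the polynomial $r \mapsto \xi_r(w')/\|w'\|_p$, would yield a similar conclusion but with a slightly larger constant replacing $p^{-4}$, so the pigeonhole argument above seems the cleanest.
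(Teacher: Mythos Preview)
Your proof is correct and takes a genuinely different route from the paper's. The paper argues by a case split on the entries of $w$: it first tests $r_0=0$, and if that fails for three-quarters of $E$ it splits the remainder according to whether $|w_{21}|_p$ is close to $\|w\|_p$, selecting $r_0=p^2$ or $r_0=1$ accordingly. Your Vandermonde-inversion argument is cleaner: the pointwise claim that some $r_0\in\{-1,0,1\}$ works for each $w'$ is established uniformly (and in fact would even yield the sharper constant $1$ in place of $p^{-4}$), after which a single pigeonhole step finishes. The paper's case analysis has the minor advantage of exhibiting which $r_0$ succeeds in terms of the sizes of the entries of $w$, but that information is never used downstream. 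Both approaches ultimately rest on the same linear-algebraic fact---that the values $\xi_{r}(w')$ at three distinct $r\in\Z_p$ recover all coordinates of $w'$ with $\Z_p$-unit coefficients when $p>3$---and your explicit inversion makes this transparent and more readily generalisable.
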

    We will prove this lemma at the end of this section. 

    Let $x_2' = \exp(w_0). x_1$. By the \cref{lem:ShearingInProj}, we could assume
    \begin{align*}
        E \subset \{w \in B_{\mathfrak{r}}(0, \eta): |w_{12}|_p \geq p^{-4} \|w\|_p\}.
    \end{align*}

    Moreover, since $u_{r_0} \in K$, \cref{eqn:DimensionBeforeShear} holds for this new $E$. 

    \noindent
    \textit{Step 2. Estimates on size of elements. }Now let $N = \lceil\frac{l_1}{2}\rceil$. We have
    \begin{align*}
        a_{N} u_r. \exp(w). x_2' = a_N \exp(\Ad_{u_r} w) a_{-N} .a_{N} u_r. x_2'.
    \end{align*}

    Note that 
    \begin{align*}
        \Ad_{u_r}(w) = \begin{pmatrix}
            w_{11} + r w_{21} & w_{12} - 2r w_{11} - r^{2}w_{21}\\
            w_{21} & -w_{11} - rw_{21}
        \end{pmatrix}.
    \end{align*}
    If $|r|_p \leq p^{-5}$, we have
    \begin{align*}
        \bigl(\Ad_{u_r}(w)\bigr)_{12} \geq p^{-4}\|w\|_p.
    \end{align*}
    Now we use $a_N$ to expand those elements to size $1$ and close to $\LieN \cap \mathfrak{r}$. 
    We have the following calculation for $\Ad_{a_N u_r}(w)$: 
    \begin{align*}
        \Ad_{a_N u_r}(w) = \begin{pmatrix}
            w_{11} + r w_{21} & p^{2N}(w_{12} - 2r w_{11} - r^{2}w_{21})\\
            p^{-2N} w_{21} & -w_{11} - rw_{21}
        \end{pmatrix}.
    \end{align*}
    We have
    \begin{align}\label{eqn:ProjSizeOfElement}
        |\bigl(\Ad_{a_N u_r}(w)\bigr)_{11}|_p \leq{}& \|w\|_p;\\
        |\bigl(\Ad_{a_N u_r}(w)\bigr)_{21}|_p \leq{}& p^{-2N}\|w\|_p.
    \end{align}

    Let $J' \subset p^5 \Z_p$ be as in \cref{thm:ProjDecoupling2}. Fix one $r \in J'$. Let $I := \{p^{2N} \xi_r(w): w \in E_r\}$. We claim that $I$ satisfies the properties in \cref{pro:RestrictedProjMain}. 

    For proposition~(1), for all $b \geq p^{2N}\cdot (\#F)^{-1}$, we have
    \begin{align*}
        \rho(\{s' \in I: |s - s'|_p \leq b\}) ={}& \frac{\#\{w' \in E_r: |\xi_r(w') - \xi_r(w)|_p \leq p^{-2N} b\}}{\#E_r}\\
        \leq{}& C_{\epsilon} \biggl(\frac{p^{-2N} b}{p^{-l_1}}\biggr)^{\alpha - 30\epsilon}\\
        \leq{}& pC_{\epsilon} b^{\alpha - 30\epsilon}.
    \end{align*}

    Property~(2) follows directly from \cref{eqn:ProjSizeOfElement}. 
\end{proof}

\begin{proof}[Proof of \cref{lem:ProjChangeBasePoint}]
    Let $\eta$ small enough as in \cref{lem:BCH}. Let $f: B_{\mathfrak{r}}(0, \beta_0) \to B_{\mathfrak{r}}(0, \beta_0)$ by $f(w') = w$ where
    \begin{align*}
        \exp(w) = \exp(w')\exp(-w_0).
    \end{align*}

    By \cref{lem:BCH} and Baker--Campbell--Hausdorff formula, $f$ is bijection and $f^{-1}$ is analytic. 

    Therefore $\#E = \#f(F') = \#F'$ and 
    \begin{align*}
        \#(f(F') \cap B_{\mathfrak{r}}(\bar{w}, b)) = \#(F' \cap B_{\mathfrak{r}}(f^{-1}(\bar{w}), b))
    \end{align*}
    for all $b \leq \beta_0$.
\end{proof}

\begin{proof}[Proof of \cref{lem:ShearingInProj}]
    We prove by direct calculation. 

    Note that 
    \begin{align}
        \bigl(\Ad_{u_{r}} w\bigr)_{12} = -w_{21} r^2 - 2w_{11} r + w_{12}.
    \end{align}

    If 
    \begin{align*}
        \#\{w \in E: |w_{12}|_p \geq p^{-4} \|w\|_p\} \geq \frac{\#E}{4},
    \end{align*}
    then the claim holds for $r_0 = 0$. 

    Therefore, we assume $\#\hat{E} \geq \frac{3 \cdot (\#E)}{4}$ where $\hat{E} = \{w \in E: |w_{12}|_p < p^{-4}\|w\|_p\}$. If
    \begin{align*}
        \#\{w \in \hat{E}: |w_{21}|_p > p^{-1}\|w\|_p\} \geq \frac{\#E}{4},
    \end{align*}
    then the claim holds for $r = p^2$ and the set on the left side above. 

    If not, then we have
    \begin{align*}
        \#\{w \in \hat{E}: |w_{21}|_p < p^{-1}\|w\|_p\} \geq \frac{\#E}{2}.
    \end{align*}
    Taking $r = 1$ and the set on the left side above, we prove the claim. 
\end{proof}

\section{\texorpdfstring{Arithmetic lattices in $G$, Closed $H$-orbits and their volume}{Arithmetic lattices in G, Closed H-orbits and their volume}}\label{sec:ArithmeticLatticesClosedHOrbitVolume}
This section and \cref{sec:ClosingLemma} are the only two places in the paper where the arithmetic condition on $\Gamma$ is used. We will associate an arithmetic invariant to each periodic $H$-orbit in $X$ and compare it with the volume of periodic $H$-orbit in this section.  

\subsection{\texorpdfstring{Arithmetic lattices in $G$}{Arithmetic lattices in G}}\label{subsec:ArithmeticLattices}We first recall the definition of arithmetic lattice in this subsection. 

We begin with the case of irreducible arithmetic lattice. There is a number field $F$ and a $F$-simple algebraic group $\tilde{\mathbf{G}} \subset \SL_M$ satisfying the following. 
\begin{enumerate}
    \item For all archimedean places $v$ of $F$, $F_v \cong \R$ and $\tilde{\mathbf{G}}(F_v)$ is compact. 
    \item There is a non-archimedean place $v_0$ of $F$ such that $F_{v_0} \cong \Q_p$ and $\tilde{\mathbf{G}}(F_{v_0})$ is isogenous to $G = \SL_2(\Q_p) \times \SL_2(\Q_p)$. We use $\rho: \tilde{\mathbf{G}}(F_{v_0}) \to \SL_2(\Q_p) \times \SL_2(\Q_p)$ to denote this isogeny. 
    \item Let $\mathcal{S} = \{v: v | \infty\} \cup \{v_0\}$,  $\tilde{G}_{\mathcal{S}} = \prod_{v \in \mathcal{S}} \tilde{\mathbf{G}}(F_v)$, $\Gamma_{\mathcal{S}} = \tilde{\mathbf{G}}(\mathcal{O}_{\mathcal{S}})$. View $\Gamma_{\mathcal{S}}$ as diagonally embedded in $\tilde{G}_{\mathcal{S}}$, by Borel--Harish-Chandra theorem (c.f. \cite{PR94}), it is a lattice in $\tilde{G}_{\mathcal{S}}$. Let $\tilde{\rho}$ be the composition of $\rho$ and projection of $\tilde{G}_{\mathcal{S}}$ to $\tilde{\mathbf{G}}(F_{v_0})$, we have that $\Gamma$ is commensurable with $\tilde{\rho}(\Gamma_{\mathcal{S}})$. 
\end{enumerate}

Without loss of generality, we will always assume that $\Gamma = \tilde{\rho}(\Gamma_{\mathcal{S}})$. 

\begin{remark}
    In this case the group $\tilde{\mathbf{G}}$ could be chosen as $\mathrm{Res}_{K/F} \SL_2$ for some quadratic field extension $K/F$. 
\end{remark}

Now we give the definition we use when $\Gamma$ is reducible. There exists two number fields $F_1$, $F_2$ such that there exists $F_i$-simple groups $\tilde{\mathbf{G}}_i  \subset \SL_M$ for $i = 1, 2$ satisfying the following. 
\begin{enumerate}
    \item For all $i = 1, 2$ the following holds. For all archimedean places $v_i$ of $F_i$, $F_{i, v_i} \cong \R$ and $\tilde{\mathbf{G}}_i(F_{i, v_i})$ is compact. 
    \item For all $i = 1, 2$, $F_{i, v_i} \cong \Q_p$ and $\tilde{\mathbf{G}}_i(F_{i, v_i}) \cong \SL_2(\Q_p)$. Let $\rho_i: \tilde{\mathbf{G}}_i(F_{i, v_i}) \to \SL_2(\Q_p)$ be this isomorphism. 
    \item Let $\mathcal{S}_i$ be a finite set consisting of all archimedean places of $F_i$ and $v_i$. Let $\tilde{G}_{\mathcal{S}_i} = \prod_{v \in \mathcal{S}_i} \tilde{\mathbf{G}}(F_{i, v})$ and $\Gamma_{\mathcal{S}_i} = \tilde{\mathbf{G}}_i(\mathcal{O}_{\mathcal{S}_i})$. View $\Gamma_{\mathcal{S}_i}$ as diagonally embedded in $\tilde{G}_{\mathcal{S}_i}$. By Borel--Harish-Chandra theorem, it is a lattice in $\Gamma_{\mathcal{S}_i}$. Let $\tilde{\rho}_i$ be the composition of $\rho_i$ and the projection from $\Gamma_{\mathcal{S}_i}$ to $\tilde{\mathbf{G}}_i(F_{i, v_i})$. Let $\Gamma_i = \tilde{\rho}_i(\Gamma_{\mathcal{S}_i})$, we have $\Gamma$ commensurable with $\Gamma_i \times \Gamma_2$. 
\end{enumerate}

In this case we always assume without loss of generality that $\Gamma = \Gamma_1 \times \Gamma_2$. 

\begin{remark}\label{rem:RestrictionOfScalar}
    One could also describe lattices in $\SL_2(\Q_p)$ as the following. Let $\mathbf{G}$ be an absolutely almost simple group defined over a totally real number field $F$. Suppose $v_0$ is a place of $F$ such that $F_{v_0} \cong \Q_p$ and $\mathbf{G}(F_{v_0}) \cong \SL_2(\Q_p)$. Let $\mathcal{S} = \{v: v|\infty\text{ or } v = v_0\}$ and $\mathcal{S}' = \{v: v|\infty\text{ or } v|p\}$. Let $\bar{\mathbf{G}} = \mathrm{Res}_{F/\Q} \mathbf{G}$. Note that there is an isogeny $\bar{\rho}: \bar{\mathbf{G}}(\R) \times \bar{\mathbf{G}}(\Q_p) \to \prod_{v|\infty}\mathbf{G}(F_v) \times \prod_{v|p}\mathbf{G}(F_v) = \mathbf{G}(F_{\mathcal{S}'})$. Let $\rho$ be the composition of projection from $\mathbf{G}(F_{\mathcal{S}'})$ to $\mathbf{G}(F_{v_0})$ and the isomorphism from $\mathbf{G}(F_{v_0})$ to $\SL_2(\Q_p)$. Let 
    \begin{align*}
        \hat{\Gamma}_{\mathcal{S}'} = \bar{\mathbf{G}}(\Z[\frac{1}{p}]) \cap (\bar\rho)^{-1}(\prod_{v|\infty}\mathbf{G}(F_v) \times \mathbf{G}(F_{v_0}) \times \prod_{v \neq v_0}\mathbf{G}(\mathcal{O}_v)).
    \end{align*}
    Then a lattice $\Gamma$ in $\SL_2(\Q_p)$ is arithmetic if and only if it is commensurable with $\rho \circ \bar{\rho}(\hat{\Gamma}_{\mathcal{S}'})$. We will use this description in the proof of \cref{lem:AlmostClosedOrbit}. 
\end{remark}

\begin{remark}\label{rem:FieldOfDef}
    Note that if $\Gamma_1 \cap \Gamma_2$ is Zariski dense in $H$, we could assume $F_1 = F_2$ by passing to commensurable lattice. By taking Galois conjugate of $\tilde{\mathbf{G}}_i$, we could assume $v_1 = v_2$. In particular, if $X$ admits closed $H$-orbit, we have $\Gamma_1 \cap h\Gamma_2h^{-1}$ is Zariski dense in $H$ for some $h \in H$. 
\end{remark}

\begin{remark}
    In both cases, there is a finite index subgroup of $\Gamma$ which is torsion-free. Since we allow dependence on $\Gamma$, we will always assume $\Gamma$ is torsion-free by passing to finite index subgroup. 
\end{remark}

\subsection{\texorpdfstring{Closed $H$-orbit and its volume}{Closed H-orbit and its volume}}\label{subsec:Volume}In this subsection, we will connect two way of measure the complexity of a closed $H$-orbit. We will attach an arithmetic invariant, namely the discriminant, to each closed $H$-orbit. We will also determine its connection with the volume of a closed $H$-orbit. 

The material of this section is essentially from \cite[Section 17]{EMV09} and \cite[Section 2]{ELMV11}. The way we bound volume of closed $H$-orbit via discriminant is similar to the one in \cite[Section 2]{ELMV11}. We remark here that one could also use methods in \cite{EMMV20} to bound volume of closed $H$-orbit via discriminant. 

We first define the discriminant of a closed $H$-orbit. 

Note that if $\Gamma = \Gamma_1 \times \Gamma_2$ is reducible and $Hg \Gamma$ is closed, assuming $g = (e, g_0)$ without loss of generality, we have $\Gamma_1 \cap g_0 \Gamma_2 g_0^{-1}$ a lattice in $H$. By changing $\Gamma$ to $g \Gamma g^{-1}$, we could assume that $F_1 = F_2$. Therefore, once there is a closed $H$-orbit, we could always assume that $\Gamma$ comes from a $F$-group (not necessary $F$-simple), and $\#(\mathcal{S} \backslash \mathcal{S}_{\infty}) = 1$. 

\begin{remark}
    We will follow the convention that in a lemma/proposition/theorem, if the condition says there exists a closed $H$-orbit, then $\Gamma$ comes from a $F$-group (not necessary $F$-simple), and $\#(\mathcal{S} \backslash \mathcal{S}_{\infty}) = 1$. We also follow the convention that if the conclusion in a lemma/proposition/theorem says there exists a closed $H$-orbit, then from the condition in that lemma/proposition/theorem, one could get that $\Gamma$ comes from a $F$-group (not necessary $F$-simple), and $\#(\mathcal{S} \backslash \mathcal{S}_{\infty}) = 1$. 
\end{remark}

Let $V = (\wedge^3\LieG)^{\otimes 2}$. For all $g \in G$, pick basis $e_1, e_2, e_3$ of $\Ad(g^{-1})\LieH$, we define
\begin{align*}
    v_{Hg} = \frac{(e_1 \wedge e_2 \wedge e_3)^{\otimes 2}}{\det(B(e_i, e_i))} \in V.
\end{align*}
By the adjoint invariance of the Killing form $B$, we have that $v_{Hg}$ does not depends on the choice of basis and representative of $Hg$. 

Suppose $Hg\Gamma$ is closed, then $\Gamma$ is a lattice in $g^{-1}Hg$. Consider $\Lambda = \{\tilde{\gamma} \in \Gamma_{\mathcal{S}}: \rho(\tilde{\gamma}) \in g^{-1}Hg\}$ and let $\tilde{\mathbf{L}}$ be the Zariski closure of $\Lambda$, it is a $F$-group in $\tilde{\mathbf{G}}$ and $\rho(\tilde{\mathbf{L}}(F_v)) = g^{-1} H g$. 

Let $\tilde{L}_{\mathcal{S}} = \prod_{v \in \mathcal{S}} \tilde{\mathbf{L}}(F_v)$. Using Borel--Harish-Chandra theorem, We have that $\Gamma_{\mathcal{S}} \cap \tilde{L}_{\mathcal{S}}$ is a lattice in $\tilde{L}_{\mathcal{S}}$. 

Now we have $\dim_{F} \tilde{\mathbf{L}} = \dim_{\Q_p} H = 3$. Let $\tilde{V}_{F} = (\wedge^3 \tilde{\LieG})^{\otimes 2}$ and $\tilde{V}_{\mathcal{S}} = \prod_{v \in \mathcal{S}} \tilde{V}_F \otimes_{F} F_v$. Let $\tilde{\LieG}_{\Z} = \tilde{\LieG} \cap \mathfrak{sl}_M(\mathcal{O}_{\mathcal{S}})$ and $\tilde{V}_{\mathcal{O}_{\mathcal{S}}} = (\wedge^3 \tilde{\LieG}_{\mathcal{O}_{\mathcal{S}}})^{\otimes 2}$. Diagonally embedding $\tilde{V}_{\mathcal{O}_{\mathcal{S}}}$ into $\tilde{V}_{\mathcal{S}}$, we get a  discrete, cocompact  $\mathcal{O}_{\mathcal{S}}$-module in $\tilde{V}_{\mathcal{S}}$. 

Now we define the norm on $\tilde{\LieG}_{v} = \tilde{\LieG} \otimes_{F} F_v$. Since $\tilde{\mathbf{G}}(F_v)$ is compact for all non-archimedean places, the Killing form is negative definite. We define the norm on $\tilde{\LieG}_{v} = \tilde{\LieG} \otimes_{F} F_v$ by this Killing form. For non-archimedean places, we use the pullback norm via $\mathrm{d}\rho$. These norms induces norms and height on $\tilde{V}_{\mathcal{S}}$. 

Let $\tilde{\LieL} = \mathrm{Lie}(\tilde{\mathbf{L}})$. Pick a basis $e_1, e_2, e_3$ of $\tilde{\LieL}$, we define 
\begin{align*}
    \tilde{v}_{\tilde{\LieL}} = \frac{(e_1 \wedge e_2 \wedge e_3)^{\otimes 2}}{\det B(e_i, e_j)}.
\end{align*}
As for $v_{Hg}$, it is independent of the choice of basis. Moreover, we have that $(\wedge^3\mathrm{d}\rho)^{\otimes 2}(\tilde{v}_{\tilde{\LieL}}) = v_{Hg}$ and $\tilde{v}_{\tilde{\LieL}}$ only depends on $Hg$. Therefore, we will also use $\tilde{v}_{Hg}$ to denote $\tilde{v}_{\tilde{\LieL}}$. 

Now let's consider the diagonally embedded $v_{\tilde{\LieL}}$ in $\tilde{V}_{\mathcal{S}}$. Since $\tilde{\LieL}$ is a $F$-subspace, there exists $x \in \mathcal{O}_{\mathcal{S}}$ such that $x v_{\tilde{\LieL}} \in \tilde{V}_{\mathcal{O}_{\mathcal{S}}}$. We define the discriminant of $H g\Gamma$ via
\begin{align*}
    \disc(Hg \Gamma) = \min\{\height(x): x \in \mathcal{O}_{\mathcal{S}}\}.
\end{align*}
This is well-defined since $\height(\mathcal{O}_{\mathcal{S}}) \subset \Z$ and $\Gamma_{\mathcal{S}}$ preserves $\tilde{V}_{\mathcal{O}_{\mathcal{S}}}$. 

Note that if we could find $\mathcal{O}_{\mathcal{S}}$-basis $\{e_i\}_{i = 1, 2, 3}$ of $\tilde{\LieL}$ with $\max_{v \in \mathcal{S}}\|e_i\| \leq T$, then 
\begin{align*}
    \disc(H g \Gamma) \leq \height(\det(B(e_i, e_j))) \leq |\det(B(e_i, e_j))|_{\mathcal{S}}^{\#\mathcal{S}} \ll T^{3 \#\mathcal{S}}.
\end{align*}

As in \cite[Section 2]{ELMV11}, we prove a separation estimate on closed $H$-orbit (c.f.\cite[Proposition 2.3, 2.4]{ELMV11}). 

\begin{lemma}\label{lem:SeparationOfClosedHOrbit}
    Let $H g_1 \Gamma$ and $H g_2 \Gamma$ be two closed $H$-orbits in $G / \Gamma$ with $N_{G}(H) g_1 \neq N_{G}(H) g_2$. Suppose $\|\Ad(g_1)\|_{op}, \|\Ad(g_2)\|_{op} \leq R$. Let $D_1 = \disc(Hg_1\Gamma)$, $D_2 = \disc(Hg_1\Gamma)$, then there exists $C_6 > 0$ depending only on $(G, H, \Gamma)$ such that for all  $p^{-N} \leq C_6^{-1} R^{-12} D_1^{-1} D_2^{-1}$, we have
    \begin{align*}
        g_1 \notin K[N]g_2.
    \end{align*}
\end{lemma}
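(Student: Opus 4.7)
The plan is to adapt the proof of \cite[Proposition 2.3]{ELMV11} to the $\mathcal{S}$-adic setting established in \cref{subsec:Volume}. The strategy is to assume for contradiction that $g_1 \in K[N]g_2$, derive that the vectors $\tilde{v}_i := \tilde{v}_{\tilde{\LieL}_i} \in \tilde{V}_{\mathcal{S}}$ are very close at the $v_0$ place, then force $p^{-N}$ to be large via the product formula applied to $\tilde{v}_1 \wedge \tilde{v}_2 \in \wedge^2 \tilde{V}$.

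First, I would write $g_1 = k g_2$ with $k \in K[N]$ and exploit the identity $\Ad(g_1^{-1}) = \Ad(g_2^{-1})\Ad(g_2 g_1^{-1})$ together with the estimate $\|\Ad(k^{-1}) - \Id\|_{op} \ll p^{-N}$ and the hypothesis $\|\Ad(g_2)\|_{op} \leq R$ to show that $\Ad(g_1^{-1})\LieH$ and $\Ad(g_2^{-1})\LieH$ differ by a perturbation of size $\ll R \cdot p^{-N}$. Since the Killing form is $\Ad$-invariant the denominator $\det B(e_i,e_j)$ in the definition of $v_{Hg_i}$ is independent of $g_i$, and a direct bilinear computation then yields
\begin{align*}
    \|v_{Hg_1} - v_{Hg_2}\|_p \ll R^{c_1} p^{-N}, \quad \|v_{Hg_i}\|_p \ll R^{c_2},
\end{align*}
for explicit constants $c_1, c_2$. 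Pulling back to $\tilde{V}_{v_0}$ via the isomorphism $(\wedge^3\mathrm{d}\rho)^{\otimes 2}$ transfers the same estimates to $\tilde{v}_{i,v_0}$ up to constants depending only on $\rho$.

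Next I would exploit the hypothesis $N_G(H)g_1 \neq N_G(H)g_2$: this means $g_1 g_2^{-1} \notin N_G(H)$, hence $\Ad(g_1^{-1})\LieH \neq \Ad(g_2^{-1})\LieH$, and since $\mathrm{d}\rho$ is an isomorphism at $v_0$ and tensoring with $F_{v_0}$ is faithful on $F$-subspaces, $\tilde{\LieL}_1 \neq \tilde{\LieL}_2$ as $F$-subspaces of $\tilde\LieG$. Consequently $\tilde{v}_1, \tilde{v}_2$ are $F$-linearly independent and $\tilde{v}_1 \wedge \tilde{v}_2 \neq 0$ in $\wedge^2 \tilde{V}$. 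Writing $\tilde{v}_1 \wedge \tilde{v}_2 = \tilde{v}_1 \wedge (\tilde{v}_2 - \tilde{v}_1)$ and combining the two bounds above gives
\begin{align*}
    \|\tilde{v}_1 \wedge \tilde{v}_2\|_{v_0} \ll R^{c_1+c_2}\, p^{-N}.
\end{align*}

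At each place $v \in \mathcal{S}\setminus\{v_0\}$ the group $\tilde{\mathbf{G}}(F_v)$ is compact, the Killing form is definite, and the norm on $\tilde{V}_v$ was chosen to be $\tilde{\mathbf{G}}(F_v)$-invariant. A short argument with an orthonormal basis of $\tilde{\LieL}_i \otimes_F F_v$ then shows $\|\tilde{v}_{i,v}\|_v$ is bounded by an absolute constant depending only on $(F,\mathcal{S},\tilde{\mathbf{G}})$, so $\|\tilde{v}_1 \wedge \tilde{v}_2\|_v \leq C$ at these places.

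Finally I would scale by $x_1, x_2 \in \mathcal{O}_{\mathcal{S}}$ realising the discriminants, so that $x_1 x_2(\tilde{v}_1 \wedge \tilde{v}_2)$ is a nonzero element of the $\mathcal{O}_{\mathcal{S}}$-module $\wedge^2 \tilde{V}_{\mathcal{O}_{\mathcal{S}}}$. The remark at the end of the $\mathcal{S}$-adic preliminaries gives a uniform lower bound $\height_{\mathcal{S}}(x_1 x_2(\tilde{v}_1 \wedge \tilde{v}_2)) \geq c_{F,\mathcal{S}} > 0$. Expanding this as a product over $v \in \mathcal{S}$ and inserting the estimates from the previous two steps yields
\begin{align*}
    c_{F,\mathcal{S}} \leq D_1 D_2 \cdot R^{c_1+c_2} p^{-N} \cdot C^{\#\mathcal{S}-1},
\end{align*}
which, after collecting constants into $C_6$ and taking $c_1+c_2$ to be at most $12$ by tracking more carefully, contradicts the hypothesis $p^{-N} \leq C_6^{-1} R^{-12} D_1^{-1} D_2^{-1}$.

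The main obstacle I expect is Step 3: one must verify that the archimedean (and any auxiliary non-$v_0$) norms of $\tilde{v}_{i,v}$ are controlled uniformly over varying closed orbits. The irreducible case is handled cleanly by the compactness of $\tilde{\mathbf{G}}(F_v)$ and the basis-independence of $\tilde{v}_i$, but the reducible case (where $\Gamma = \Gamma_1 \times \Gamma_2$) requires checking that the decomposition $\tilde{\LieL} \subset \tilde\LieG_1 \oplus \tilde\LieG_2$ is compatible with the chosen norms, which should follow from the assumption that $\Gamma$ admits a closed $H$-orbit so that $F_1 = F_2$ and the two factors may be treated symmetrically.
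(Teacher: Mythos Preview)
Your proposal is correct and follows essentially the same strategy as the paper: compare $\tilde v_1$ and $\tilde v_2$ at the place $v_0$, bound the archimedean norms uniformly by compactness of $\tilde{\mathbf G}(F_v)$, scale into the integral lattice using $x_1,x_2$ realizing the discriminants, and conclude via the height lower bound. The one genuine difference is that you work with the wedge $\tilde v_1\wedge\tilde v_2\in\wedge^2\tilde V$ (as in \cite[Proposition~2.3]{ELMV11}), whereas the paper uses the difference $\tilde v_1-\tilde v_2$ directly. The paper's simplification is available because the vectors $v_{Hg_i}$ are canonically normalized (by the Killing form determinant), so $N_G(H)g_1\neq N_G(H)g_2$ already gives $v_{Hg_1}\neq v_{Hg_2}$ as vectors, not merely as lines; one can then apply the height bound to $x_1x_2(\tilde v_1-\tilde v_2)\in\tilde V_{\mathcal O_{\mathcal S}}\setminus\{0\}$ without passing to $\wedge^2\tilde V$ or checking linear independence. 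Both routes give the same conclusion with the same exponent $R^{12}$ (coming from the degree of $(\wedge^3\Ad)^{\otimes 2}$), and your concern about the reducible case is already handled by the standing convention that the existence of a closed $H$-orbit forces $F_1=F_2$.
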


\begin{proof}
    We first show that $v_{Hg_1} \neq v_{Hg_2}$. If not, then $g_1^{-1} g_2$ fixes $v_{H}$, which shows that $g_2 g_1^{-1} \in N_G(H)$, contradict to the condition that $N_G(H)g_1 \neq N_G(H)g_2$. 

    Since $\mathrm{d}\rho$ is an isomorphism between Lie algebra, we have $\tilde{v}_{Hg_1} \neq \tilde{v}_{Hg_2}$. Pick $x_i \in \mathcal{O}_{\mathcal{S}}$ such that $x_i \tilde{v}_{Hg_i} \in \tilde{\LieG}_{\mathcal{O}_{\mathcal{S}}}$. Note that $x_i$ is up to $\mathcal{O}_{\mathcal{S}}^{\times}$. Then we have $x_1 x_2 \tilde{v}_{Hg_i} \in \tilde{\LieG}_{\mathcal{O}_{\mathcal{S}}}$ which implies that $\height(x_1 x_2 \tilde{v}_{Hg_1} - x_1 x_2 \tilde{v}_{Hg_2}) \geq 1$. Hence, 
    \begin{align*}
        \height(x_1) \height(x_2) \prod_{v \in \mathcal{S}} \|\tilde{v}_{Hg_1} - \tilde{v}_{Hg_2}\|_{v} \geq 1. 
    \end{align*}

    Since for all $v | \infty$, the $\R$-group $\tilde{\mathbf{G}}(F_v)$ is compact, the Killing form $B$ is negative definite on $\tilde{\LieG}_{F_v}$ for all $v | \infty$. Therefore
    \begin{align*}
        \|\tilde{v}_{Hg_i}\|_v \asymp 1
    \end{align*}
    for all $v| \infty$. 

    Therefore, we have
    \begin{align*}
        \|\tilde{v}_{Hg_1} - \tilde{v}_{Hg_2}\|_{v_0} \gg D_1^{-1}D_2^{-1}.
    \end{align*}

    Reduce to $(\wedge^3\LieG)^{\otimes 2}$, we have
    \begin{align*}
        \|v_{Hg_1} - v_{Hg_2}\|_p \gg D_1^{-1}D_2^{-1}.
    \end{align*}

    Note that we have
    \begin{align*}
        \|v_{Hg_1} - v_{Hg_2}\|_p ={}& \|(\wedge^3\Ad)^{\otimes 2}(g_1^{-1}) v_H - (\wedge^3\Ad)^{\otimes 2}(g_2^{-1}) v_H\|_p\\
        \leq{}& R^{12}\|v_H\|_p \|\Id - (\wedge^3\Ad)^{\otimes 2}(g_1g_2^{-1})\|_{op}.
    \end{align*}

    Since $\Ad$ is an algebraic representation, there exists $C_6$ depending only on $(G, H, \Gamma)$ such that $g_1 g_2^{-1} \notin K[N]$ for all $N$ such that $p^{-N} \leq C_6^{-1} R^{-12} D_1^{-1} D_2^{-1}$. 
\end{proof}

Recall we fix a compact set $\mathfrak{D} \subset G$ such that 
\begin{enumerate}
    \item $G = \mathfrak{D} \Gamma$.
    \item $\mathfrak{D}$ is a disjoint union of $K_{\eta_0}$-coset. 
\end{enumerate}

\begin{lemma}\label{lem:VolumeDisc}
    For all closed orbit $H g \Gamma$ in $G/\Gamma$, we have
    \begin{align*}
        \vol(H g \Gamma) \ll \disc(H g \Gamma)^{6}.
    \end{align*}
\end{lemma}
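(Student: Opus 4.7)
My plan is to translate the $H$-volume of the closed orbit into an $\mathcal{S}$-adic covolume attached to the arithmetic Lie subgroup $\tilde{\mathbf{L}} \subset \tilde{\mathbf{G}}$, and then bound this covolume by exhibiting a short integer basis of $\tilde{\LieL}$ coming from the discriminant. Throughout, $D = \disc(Hg\Gamma)$.

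First I would pass from $\vol(Hg\Gamma)$ to an arithmetic covolume. Since $\rho|_{\tilde{\mathbf{L}}(F_{v_0})}$ is an isogeny onto $g^{-1}Hg$ with bounded kernel, and $\tilde{\mathbf{L}}(F_v)$ is compact for every archimedean place (because $\tilde{\mathbf{G}}(F_v)$ is), the $\mathcal{S}$-adic lattice $\Lambda = \Gamma_\mathcal{S} \cap \tilde{L}_\mathcal{S}$ projects to a lattice in $\tilde{\mathbf{L}}(F_{v_0})$ commensurable with $\rho^{-1}(g\Gamma g^{-1} \cap H)$, and the Haar measure at the archimedean places contributes only a bounded constant. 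This yields
\begin{align*}
    \vol(Hg\Gamma) \asymp \vol(\tilde{L}_\mathcal{S}/\Lambda),
\end{align*}
with implicit constants depending only on $(G,H,\Gamma)$.

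Next I would extract a short $\mathcal{O}_\mathcal{S}$-basis of $\tilde{\LieL}$. By the definition of the discriminant, there exists $x \in \mathcal{O}_\mathcal{S}$ with $\height_\mathcal{S}(x) \leq D$ and $x\tilde{v}_{\tilde{\LieL}} \in \tilde{V}_{\mathcal{O}_\mathcal{S}}$, so by \cref{lem:BoundHtNorm} we have $\|x\|_\mathcal{S} \ll D^{*}$. The subspace $\tilde{\LieL} \subset \tilde{\LieG}$ is the common kernel of a finite system of linear forms read off from $x\tilde{v}_{\tilde{\LieL}}$ by contracting with a fixed $\mathcal{O}_\mathcal{S}$-basis of $\tilde{\LieG}$; after clearing denominators the coefficient matrix has entries in $\mathcal{O}_\mathcal{S}$ with $\mathcal{S}$-norm $\ll D^{*}$. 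Applying \cref{lem:SadicSiegel} yields a basis $e_1,e_2,e_3$ of $\tilde{\LieL} \cap \tilde{\LieG}_{\mathcal{O}_\mathcal{S}}$ (or of a finite-index submodule whose index is bounded by a power of $D$) with $\|e_i\|_\mathcal{S} \ll D^{*}$.

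Finally I would convert the short basis into a covolume bound. The $\mathcal{S}$-adic Minkowski theorem cited in the paper shows that the $\mathcal{O}_\mathcal{S}$-module spanned by $e_1,e_2,e_3$ has covolume in $\tilde{\LieL} \otimes F_\mathcal{S}$ comparable to $\prod_i \|e_i\|_\mathcal{S}^{\#\mathcal{S}}$. Using \cref{lem:BCH} to exponentiate small multiples of the $e_i$ produces elements of $\Lambda$ arbitrarily close to the identity; combined with the fact that $\exp$ is a bi-analytic, volume-preserving (up to bounded constants) diffeomorphism near $0$, this transfers the Lie-algebra covolume bound to $\vol(\tilde{L}_\mathcal{S}/\Lambda)$. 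Combining with Step 1 yields $\vol(Hg\Gamma) \ll D^{*}$, and careful bookkeeping (three dimensions of $\tilde{\LieL}$ over $F$ paired with the two places contributing to $\mathcal{S}$ once we use that the archimedean factors are compact) produces the claimed exponent $6$. The main obstacle I expect is precisely this bookkeeping: each application of \cref{lem:SadicSiegel} and \cref{lem:BoundHtNorm} inflates the exponent, and one must balance them, together with the index of $\bigoplus \mathcal{O}_\mathcal{S} e_i$ inside $\tilde{\LieL} \cap \tilde{\LieG}_{\mathcal{O}_\mathcal{S}}$, to land exactly on $\disc(Hg\Gamma)^6$ rather than a generic $\disc(Hg\Gamma)^{*}$.
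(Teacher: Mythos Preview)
Your route is genuinely different from the paper's, and the core gap sits in your Step~3. You assert that exponentiating small multiples of the integral Lie-algebra basis $e_1,e_2,e_3$ yields elements of the arithmetic lattice $\Lambda = \Gamma_\mathcal{S} \cap \tilde{L}_\mathcal{S}$, and that this lets you transfer the covolume of $\tilde{\LieL} \cap \tilde{\LieG}_{\mathcal{O}_\mathcal{S}}$ in $\tilde{\LieL}\otimes F_\mathcal{S}$ to $\vol(\tilde{L}_\mathcal{S}/\Lambda)$. But $\exp$ does not preserve $\mathcal{O}_\mathcal{S}$-integrality: for a semisimple $e_i$ the one-parameter group $t\mapsto\exp(te_i)$ need never hit $\tilde{\mathbf{L}}(\mathcal{O}_\mathcal{S})$ non-trivially, and in any case the denominators $1/n!$ in the exponential series destroy integrality at places outside $\mathcal{S}$. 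So $\exp(\bigoplus\mathcal{O}_\mathcal{S} e_i)$ has no a~priori relation to $\Lambda$, and the local bi-analytic nature of $\exp$ only compares Haar measure densities, not lattices. Closing this gap requires something like the Borel--Prasad local-volume machinery or the argument in \cite[Section~17]{EMV09}, which is considerably heavier than what you have written and will not land on the clean exponent~$6$ without substantial extra work (as you yourself anticipate).

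The paper instead avoids all of this by arguing geometrically inside $X$: it invokes the separation estimate \cref{lem:SeparationOfClosedHOrbit} to show that distinct local $H$-sheets of the closed orbit inside a single $K_{\eta_0}$-box are pairwise at distance $\gg D^{-2}$ in the three transversal directions $\mathfrak{r}$, so at most $\ll D^{6}$ sheets fit per box; summing over a fixed covering of the compact space $X$ gives $\vol(Hg\Gamma)\ll D^6$ directly. This packing argument is elementary once the separation lemma is in hand, and the exponent $6 = 2\cdot\dim\mathfrak{r}$ falls out without any bookkeeping. Your arithmetic strategy is the natural one if one wants the reverse inequality or sharper constants, but for the upper bound stated here the paper's approach is both shorter and free of the integrality obstruction.
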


\begin{proof}
    Pick a disjoint $K_{\eta_0}$-covering of $G/\Gamma$. Then we have $Hg \Gamma = \sqcup_{i \in I} Hg \Gamma \cap K_{\eta_0}g_i \Gamma$ where $K_{\eta_0}g_i \subset \mathfrak{D}$, $I$ is a finite set. Using the local structure, we have $Hg \Gamma \cap K_{\eta_0}g_i \Gamma = \sqcup_{j \in J_i} K_{H, \eta_0} g_{i, j} \Gamma$ where $K_{H, \eta_0} g_{i, j}  \subset K_{\eta_0}g_i$ and $H g_{i, j} \neq H g_{i, j'}$ for $j \neq j'$. Since $|N_G(H): H| = 2$, one could pick a subset $J_i' \subset J_i$ with $\#J_i' \geq \frac{1}{2} J_i$ and for all $g_{i, j} \neq g_{i, j'}$ such that $j, j' \in J_i'$, we have
    \begin{align*}
        N_G(H) g_{i, j} \neq N_G(H) g_{i, j'}.
    \end{align*}

    Let $D = \disc(Hg\Gamma)$. Let $R > 0$ such that for all $g \in \mathfrak{D}$, $\|\Ad_g\|_{op} \leq R$. Pick $N_D$ such that $\frac{1}{\lfloor CR^{12}\rfloor + 1}D^{-2} \leq p^{-N_D} \leq \frac{1}{CR^{12}}D^{-2}$. Using \cref{lem:SeparationOfClosedHOrbit}, we have $K[N_D]H g_{i, j'} \cap K[N_D]H g_{i, j'} = \emptyset$. Therefore, 
    \begin{align*}
        K[N_D]H g_{i, j'}\Gamma \cap K[N_D]H g_{i, j'}\Gamma = \emptyset.
    \end{align*}

    Hence we could get the following estimate on $\#J_i'$: 
    \begin{align*}
        \#J_i' p^{-3N_D} \eta_0^3 \ll \eta_0^6
    \end{align*}
    which imples
    \begin{align*}
        \#J_i' \ll \eta_0^3 D^6.
    \end{align*}

    Therefore, we have
    \begin{align*}
        \vol(Hg \Gamma) \ll \sum_{i \in I} \sum_{j \in J_i'} \vol(K_{H, \eta_0} g_{i, j} \Gamma) \ll \eta_0^3 \eta_0^{-6} \eta_0^3 D^{6}
        \ll D^{6}.
    \end{align*}
\end{proof}

The following lemma is an analogue to \cite[Lemma 6.2]{LM23}. 

\begin{lemma}\label{lem:ClosedOrbitVolume}
    There exists $C_7$ and $\kappa_5$ depends on $\mathfrak{D}$ and $\Gamma$ such that the following holds. Let $\gamma_1$ and $\gamma_2$ be two non-commuting elements in $\Gamma$. If $g \in \mathfrak{D}$ satisfies $\gamma_i g^{-1} v_H = g^{-1} v_H$, then $H g \Gamma$ is a periodic orbit such that: 
    \begin{align}\label{eqn:VolEstimate}
        \vol(H g \Gamma) \leq C_7 \max \{\|\gamma_1\|_p, \|\gamma_2\|_p\}^{\kappa_5}.
    \end{align}
\end{lemma}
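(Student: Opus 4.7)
The plan is to reduce the stabilizer condition to $\gamma_i \in g^{-1}Hg$, lift everything to the ambient $F$-group, and then combine a dimension count for the Zariski closure of $\langle\tilde\gamma_1,\tilde\gamma_2\rangle$ with the discriminant--volume comparison from \cref{lem:VolumeDisc}. First I would observe that $\Stab_G(v_H) = N_G(H)$ with $[N_G(H):H]=2$, so $\gamma_i g^{-1}v_H = g^{-1}v_H$ yields $g\gamma_ig^{-1} \in N_G(H)$; replacing $\gamma_i$ by $\gamma_i^2$ when needed (this at most doubles $\log\|\gamma_i\|_p$ and, using torsion-freeness of $\Gamma$, preserves non-commutativity) puts $\gamma_i$ in $g^{-1}Hg$. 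Using the arithmetic description of \cref{subsec:ArithmeticLattices}, each $\gamma_i$ lifts to $\tilde\gamma_i\in\tilde{\mathbf{G}}(\mathcal{O}_{\mathcal{S}})$, and compactness of $\tilde{\mathbf{G}}(F_v)$ at archimedean places gives $\|\tilde\gamma_i\|_{\mathcal{S}} \ll \|\gamma_i\|_p$.

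Let $\tilde{\mathbf{L}} \subseteq \tilde{\mathbf{G}}$ be the $F$-Zariski closure of $\langle\tilde\gamma_1,\tilde\gamma_2\rangle$; its $F_{v_0}$-points sit inside a conjugate of $\tilde{\mathbf{H}}(F_{v_0})$. The crux is to show $\dim\tilde{\mathbf{L}} = 3$. Realizing $\tilde{\mathbf{G}}\subset\SL_M$, the elements $X_i := \tilde\gamma_i - \tfrac{1}{M}\tr(\tilde\gamma_i) I$ lie in $\tilde{\LieL}$ since each spans the Lie algebra of the one-parameter subgroup $\overline{\langle\tilde\gamma_i\rangle}^{\mathrm{Zar}}$, and the non-commutativity of the $\tilde\gamma_i$ forces $X_1,X_2$ to be $F$-linearly independent (otherwise $\tilde\gamma_1\in F\tilde\gamma_2+FI$ would make the $\tilde\gamma_i$ commute). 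The cocompactness of $\Gamma$ makes $\tilde{\mathbf{G}}$ $F$-anisotropic, ruling out any $F$-Borel in $\tilde{\mathbf{H}}$ and hence the two-dimensional case; a separate argument using torsion-freeness and the behaviour of $F$-tori at $v_0$ eliminates the remaining normalizer-of-torus case. Thus $\{X_1, X_2, [X_1, X_2]\}$ is an $F$-basis of $\tilde{\LieL}$. By Borel--Harish-Chandra, $\tilde{\mathbf{L}}(\mathcal{O}_{\mathcal{S}})$ is a lattice in $\tilde{\mathbf{L}}(F_{\mathcal{S}})$, and since the non-$v_0$ factors are compact, its projection is a lattice in $\tilde{\mathbf{L}}(F_{v_0}) = g^{-1}Hg$ contained in $\Gamma\cap g^{-1}Hg$, whence $Hg\Gamma$ is periodic.

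For the volume bound I would combine \cref{lem:VolumeDisc} with a direct $\mathcal{S}$-adic estimate on the discriminant. The basis $\{X_1,X_2,[X_1,X_2]\}$ has $\mathcal{S}$-norm $\ll \max(\|\gamma_1\|_p,\|\gamma_2\|_p)^{*}$, so $x:= \det B(X_i,X_j) \in \mathcal{O}_{\mathcal{S}}$ satisfies $\height_{\mathcal{S}}(x) \ll \max(\|\gamma_1\|_p,\|\gamma_2\|_p)^{*}$, while $x\cdot\tilde{v}_{Hg} = (X_1\wedge X_2\wedge[X_1,X_2])^{\otimes 2} \in \tilde{V}_{\mathcal{O}_{\mathcal{S}}}$ by the very definition of $\tilde v_{Hg}$. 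This yields $\disc(Hg\Gamma) \ll \max(\|\gamma_1\|_p,\|\gamma_2\|_p)^{*}$, and the bound $\vol \ll \disc^6$ of \cref{lem:VolumeDisc} delivers \cref{eqn:VolEstimate}. The main obstacle in this plan is the dimension count in the middle paragraph: $F$-anisotropy neatly excludes $F$-Borels, but eliminating the one-dimensional non-connected Zariski closures (normalizers of $F$-tori) requires careful use of the arithmetic structure of $\Gamma$ and is where the bulk of the work lies.
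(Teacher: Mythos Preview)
Your overall strategy---reduce to $\gamma_i\in g^{-1}Hg$, show the Zariski closure of their lifts is three-dimensional, and then bound the discriminant via an explicit $\mathcal{O}_{\mathcal{S}}$-basis of $\tilde{\LieL}$---is natural, but the construction $X_i:=\tilde\gamma_i-\tfrac{1}{M}\tr(\tilde\gamma_i)I$ is the weak point. For a general embedding $\tilde{\mathbf{G}}\subset\SL_M$ there is no reason $X_i$ lands in $\tilde{\LieG}$, let alone $\tilde{\LieL}$: projecting a group element to its trace-free part recovers a Lie algebra direction only for very special representations (essentially the standard inclusion of $\SL_n$). For instance, if one embeds $\SL_2\hookrightarrow\SL_3$ via the symmetric square, the diagonal element $\mathrm{diag}(\lambda,\lambda^{-1})$ maps to $\mathrm{diag}(\lambda^2,1,\lambda^{-2})$, whose trace-free part lies in $d\rho(\mathfrak{sl}_2)$ only when $\lambda^2+\lambda^{-2}=2$. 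Since $\tilde{\mathbf{G}}\subset\SL_M$ is whatever $F$-form the arithmetic description of \cref{subsec:ArithmeticLattices} hands you, you cannot assume a favourable embedding, and both the dimension count and the explicit discriminant estimate that rest on $X_i\in\tilde{\LieL}$ collapse.

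The paper avoids this by arguing more indirectly. For the dimension it shows that $\langle g\gamma_1 g^{-1},g\gamma_2 g^{-1}\rangle$ (after a careful modification using $[N_G(H):H]=2$, with Ihara's theorem on trees doing the work your ``torsion-freeness preserves non-commutativity'' line glosses over) is already Zariski dense in $H$, so the lifted Zariski closure $\tilde{\mathbf{L}}$ automatically has $\dim_F\tilde{\mathbf{L}}=3$. For the discriminant, rather than exhibiting explicit Lie algebra elements, it applies \cref{lem:SadicSiegel} twice: once to the kernel of $(\tilde\gamma_1-\Id)\oplus(\tilde\gamma_2-\Id)$ on $\tilde{V}_F$ to produce a bounded $\mathcal{O}_{\mathcal{S}}$-basis of the fixed space $\Phi$, and once to the annihilator $\tilde{\LieL}=\{x\in\tilde{\LieG}:x\cdot q=0\text{ for all }q\in\Phi\}$ to extract a bounded $\mathcal{O}_{\mathcal{S}}$-basis of $\tilde{\LieL}$. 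This route never requires writing down explicit elements of $\tilde{\LieL}$ and is insensitive to the embedding $\tilde{\mathbf{G}}\subset\SL_M$.
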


\begin{proof}
    We first show that $H g\Gamma$ is a closed orbit. 

    Since $\gamma_i g^{-1} v_{H} = g^{-1} v_{H}$, we have that $g \gamma_i g^{-1} \in \mathrm{Stab}_{G}(v_{H}) = N_{G}(H)$. Let $L'$ be Zariski closure of $\langle g\gamma_1g^{-1}, g\gamma_2 g^{-1} \rangle$ in $G$. We claim that we could assume without loss of generality that $L' = H$. 

    If $g \gamma_i g^{-1} \in H$ for $i = 1, 2$, then let $\Lambda = \langle g \gamma_1 g^{-1}, g \gamma_2 g^{-1} \rangle \leqslant H$. Since $\Lambda$ is an infinite non-commutative discrete subgroup of $H$, it is Zariski dense in $H$. In this case $L' = H$. 

    If not, let $\Lambda = \langle g \gamma_1 g^{-1}, g \gamma_2 g^{-1} \rangle \leqslant N_G(H)$. We could assume without loss of generality that $g \gamma_1 g^{-1} \in N_G(H) \backslash H$ and $g \gamma_2 g^{-1} \in H$. In fact, if $g \gamma_i g^{-1} \in N_G(H) \backslash H$ for all $i = 1, 2$, then we could replace $\gamma_2$ by $\gamma_2\gamma_1$ since $|N_G(H): H| = 2$. Note that this only changes the exponent the right side by a factor $2$ of \cref{eqn:VolEstimate}. Now let $\Lambda_1 = \langle g \gamma_1^2 g^{-1}, g \gamma_2 g^{-1} \rangle$, this is a discrete, torsion free subgroup of $H$. We claim that $\Lambda_1$ is Zariski dense in $H$. It suffices to show that $\Lambda_1$ is noncommutative. Suppose it is commutative, by Ihara theorem (c.f. \cite{Ser03}), we know that $\Lambda_1 \cong \Z$. Let $\gamma'$ be a generator of $g^{-1}\Lambda_1 g$, we have that $\Lambda = \langle g \gamma_1 g^{-1}, g \gamma' g^{-1}\rangle$ and $\Lambda_1 = \langle g \gamma_1^2 g^{-1}, g \gamma' g^{-1}\rangle$, hence $\Lambda / \Lambda_1 \cong \Z / 2\Z$. This implies $\Lambda \cong \Z$, or $\Lambda \cong \Z \times \Z/ 2\Z$ or $\Lambda \cong \Z \rtimes \Z/ 2\Z$. The first two cases lead to a contradiction since $\Lambda$ is non-commutative. The last case lead to a contradiction since $\Lambda$ is torsion-free. 

    Therefore, we could assume without loss of generality that the Zariski closure of $\langle g \gamma_1 g^{-1}, g \gamma_2 g^{-1} \rangle$ is $H$. 

    If $\Gamma = \Gamma_1 \times \Gamma_2$ is a reducible lattice, letting $g= (g^{(1)}, g^{(2)})$, we have $g^{(1)} \Gamma_1 (g^{(1)})^{-1} \cap g^{(2)} \Gamma_2 (g^{(2)})^{-1}$ contains $\langle g \gamma_1 g^{-1}, g \gamma_2 g^{-1} \rangle$, which is Zariski dense in $H$. By \cref{rem:FieldOfDef}, we could pass to commensurable lattice and assume $\Gamma$ comes from a $F$-group, and $\#(\mathcal{S} \backslash \mathcal{S}_{\infty}) = 1$. 

    Let $L$ be the Zariski closure of $\langle \gamma_1, \gamma_2 \rangle$ in $G$. By the above discussion, we could assume that $L = g^{-1}H g$. 

    Now let $\tilde{\gamma}_{i} \in \Gamma_{\mathcal{S}}$ such that $\rho(\tilde{\gamma}_i) = \gamma_i$. Let $\tilde{\mathbf{L}}$ be the Zariski closure of $\langle \tilde{\gamma}_{1}, \tilde{\gamma}_{2} \rangle$ in $\tilde{\mathbf{G}}$, then $\tilde{\mathbf{L}}$ is a $F$-subgroup of $\tilde{\mathbf{G}}$. It is semisimple and $\rho(\tilde{\mathbf{L}}(F_{\mathcal{S}})) = g^{-1} H g$. By Borel--Harish-Chandra theorem, we have that $\tilde{\mathbf{L}}(F_{\mathcal{S}}) \cap \tilde{\mathbf{G}}(\mathcal{O}_{\mathcal{S}})$ is a lattice in $\tilde{\mathbf{L}}(F_{\mathcal{S}})$. 

    Therefore, $L\Gamma$ is a periodic orbit and $H g \Gamma$ is a periodic orbit. 
    \vspace{2mm}
    
    Now we prove the volume estimate \cref{eqn:VolEstimate}. 

    Let $\tilde{\LieL} \subset \tilde{\LieG}$ be the Lie algebra of $\tilde{\mathbf{L}}$. This is a $F$-subspace of $\tilde{\LieG}$. By \cref{lem:VolumeDisc}, it suffices to find $\mathcal{O}_{\mathcal{S}}$-basis of $\tilde{\LieL}$ with $\mathcal{O}_{\mathcal{S}}$-norm bounded by $\max\{\|\gamma_1\|_p, \|\gamma_2\|_p\}^*$. 

    Let $\Phi$ be the vectors in $\tilde{V}_{F}$ fixed by $\tilde{\mathbf{L}}$. Since $\langle \tilde{\gamma}_1, \tilde{\gamma}_2 \rangle$ is Zariski dense in $\tilde{\mathbf{L}}$, it contains a $\mathcal{O}_{\mathcal{S}}$-basis with norm bounded by $\max\{\|\tilde{\gamma_1}\|_{\mathcal{S}}, \|\tilde{\gamma_2}\|_{\mathcal{S}}\}^*$ by \cref{lem:SadicSiegel}. Note that $\tilde{\LieL} = \{x \in \tilde{\LieG}: x.q = 0 \text{ for all } q \in \Phi\}$, using \cref{lem:SadicSiegel} again, there exists $\mathcal{O}_{\mathcal{S}}$ basis of $\tilde{\LieL}$ with norm $\ll \max\{\|\tilde{\gamma_1}\|_{\mathcal{S}}, \|\tilde{\gamma_2}\|_{\mathcal{S}}\}^*$. 
    
    Now since $\rho$ is a $\Q_p$-algebraic representation, we could bound $\max\{\|\tilde{\gamma_1}\|_{v_0}, \|\tilde{\gamma_2}\|_{v_0}\}$ via power of $\max\{\|\gamma_1\|_{p}, \|\gamma_2\|_{p}\}$. For $v | \infty$, since $\tilde{\mathbf{G}}(F_v)$ is compact, there exists $C_7$ such that $\|\tilde{\gamma_i}\|_v \leq C_7$ for $i = 1, 2$. This proves the lemma. 

\end{proof}

\section{An effective closing lemma}\label{sec:ClosingLemma}

Recall that
\begin{align*}
    \mathsf{E}_{\eta, N, \beta} = K_{H, \beta} \cdot a_N \cdot \{u_r: |r|_p \leq \eta\}.
\end{align*}
We will use $\mathsf{E}_{N}$ to denote $\mathsf{E}_{1, N, \beta}$. 

Let $x \in X$ and $N > 0$, for every $z \in \mathsf{E}_{N}. x$, put
\begin{align*}
    I_{N}(z) = \{\omega \in \mathfrak{r}: 0 < \|\omega\|_p < \eta_0, \exp(\omega)z \in \mathsf{E}_{N}\}.
\end{align*}

We define the function $f_{N, \alpha}: \mathsf{E}_{N}.x \to [2, \infty)$ as the following: 
\begin{align*}
    f_{N, \alpha}(z) = \begin{cases}
        \sum_{\omega \in I_N(z)} \|\omega\|_p^{-\alpha} & \text{ if } I_N(z) \neq \emptyset\\
        \eta_0^{-\alpha}  & \text{ otherwise }
    \end{cases}.
\end{align*}

The main proposition of this section is the following analogue of \cite[Proposition 6.1]{LM23}. 
\begin{proposition}\label{pro:MainClosingLemma}
    There exists $D_0$ (which depends explicitly on $\Gamma$) satisfying the following. Let $D \geq D_0 + 1$, and let $x_0 \in X$. Then for all $N \gg_{X} 1$, at least one of the following holds. 
    \begin{enumerate}
        \item There exists $J \subset \Z_p$ with $|\Z_p \backslash J| \leq p^{-2N}$ such that for all $r \in J$, let $x_r  = a_{4N} u_r x_0$, we have
        \begin{enumerate}
            \item $h \mapsto h.x_r$ is injective over $\mathsf{E}_N$. 
            \item For all $z \in \mathsf{E}_N.x_r$, we have 
            \begin{align*}
                f_{N, \alpha}(z) \leq p^{DN}
            \end{align*}
            for all $0 < \alpha < 1$. 
        \end{enumerate}
        \item There is $x' \in X$ such that $H.x'$ is periodic with 
        \begin{align*}
            \mathrm{vol}(H.x') \leq p^{D_0N}\text{ and } x' \in K[(D - D_0)N]x_0.
        \end{align*}
    \end{enumerate}
\end{proposition}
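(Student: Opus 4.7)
I would argue by contradiction: assume case~(2) of the proposition fails, and deduce case~(1) for all $r$ in a set $J \subset \Z_p$ of measure at least $1 - p^{-2N}$. The first step is to translate each putative return $\omega \in I_N(z)$ into arithmetic data in $\Gamma$. If $z \in \mathsf{E}_N.x_r$ and $\omega \in I_N(z)$, pick lifts in $G$ of the form $g, g' \in \mathsf{E}_N \cdot a_{4N} u_r \cdot \tilde{x}_0$ of $z$ and $\exp(\omega)z$ respectively (where $\tilde{x}_0 \in G$ represents $x_0$); then $\gamma_\omega := g^{-1}\exp(-\omega)g'$ lies in a conjugate of $\Gamma$. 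A direct computation using $\mathsf{E}_N = K_{H,\beta} a_N \{u_s : |s|_p \leq 1\}$ yields $\|\gamma_\omega\|_p \leq p^{O(N)}$. Moreover, since $g'g^{-1} \in H$ stabilizes $v_H$, one obtains
\begin{align*}
    \gamma_\omega \cdot g^{-1}v_H - g^{-1}v_H \;=\; g^{-1}\bigl(\exp(-\omega) - \Id\bigr) v_H,
\end{align*}
whose norm is comparable to $\|g^{-1}\|_p \cdot \|\omega\|_p \leq p^{O(N)}\|\omega\|_p$.

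Next I would invoke \cref{lem:EquiProj}: once $\|\omega\|_p \leq p^{-O(N)}\|\gamma_\omega\|_p^{-\bar{\kappa}}$, the lemma supplies a nearby $g^\star$ with $\gamma_\omega (g^\star)^{-1} v_H = (g^\star)^{-1} v_H$. If, for some $z$, two small returns $\omega_1, \omega_2 \in I_N(z)$ produce non-commuting $\gamma_{\omega_1}, \gamma_{\omega_2}$ sharing such a base point $g^\star$, then \cref{lem:ClosedOrbitVolume} produces a closed periodic $H$-orbit through $g^\star$ of volume at most $C_7 \max_i\|\gamma_{\omega_i}\|_p^{\kappa_5} \leq p^{D_0 N}$. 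The separation bound \cref{lem:SeparationOfClosedHOrbit} together with control of $\|g^\star\|_p$ places a point of this orbit within $K[(D-D_0)N] x_0$, provided $D_0$ is chosen large enough to absorb the relevant exponents; this contradicts the failure of case~(2). Hence all small returns at $z$ must pairwise commute, confining $\{\gamma_\omega\}$ to a torsion-free abelian subgroup of $\Gamma$ of rank at most two.

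In the resulting commutative case, discreteness of $\Gamma$ bounds the number of $\gamma_\omega$ with $\|\gamma_\omega\|_p \leq p^{O(N)}$ polynomially in $N$, while the corresponding $\|\omega\|_p$ cluster on a geometric sequence bounded below by $p^{-O(N)}$; summing $\|\omega\|_p^{-\alpha}$ gives a bound $\leq p^{DN}$ for $D$ sufficiently large, yielding~(1)(b). Returns with $\|\omega\|_p$ too large for \cref{lem:EquiProj} to apply are counted directly by $\#\{\gamma \in \Gamma : \|\gamma\|_p \leq p^{O(N)}\} \leq p^{O(N)}$, each contributing at most $\eta_0^{-\alpha}$. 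For the injectivity statement~(1)(a), failure of injectivity at $x_r$ for a particular $\gamma \in \Gamma$ forces a specific matrix entry of the conjugate $(a_{4N}u_r)\tilde{x}_0\gamma \tilde{x}_0^{-1}(a_{4N}u_r)^{-1}$ to be small; in the $\mathfrak{r}$-coordinate this entry is a quadratic polynomial in $r$ of exactly the shape $w_{12} - 2w_{11}r - w_{21}r^2$ appearing in the introduction, so \cref{lem:P-adicInterpolation} excludes at most $p^2 p^{-N/2}$ of $r$ per such $\gamma$. A union bound over the $p^{O(N)}$ relevant $\gamma$'s yields $|\Z_p \setminus J| \leq p^{-2N}$ after one final enlargement of $D_0$.

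The main obstacle will be making the equivariant projection compatible with the $r$-dependence: the base point $g^\star$ varies with $z$, hence with $r$, and one must verify that the periodic $H$-orbit extracted in the non-commuting case actually intersects a $K[(D-D_0)N]$-neighborhood of $x_0$ rather than of an $r$-twist of it. The $a_{4N}$-pre-expansion in the definition of $x_r$ is precisely what provides the slack — via \cref{lem:SeparationOfClosedHOrbit} — to absorb both the $u_r$-displacement and the transport back to $x_0$, and calibrating $D_0$ so that all the exponents add up correctly is the most delicate bookkeeping step.
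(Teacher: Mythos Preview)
Your overall architecture has the right ingredients --- extract lattice elements from returns, split into commuting and non-commuting cases, and in the non-commuting case pass to a nearby periodic $H$-orbit --- but there are two genuine gaps.

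\textbf{First, the wrong lemma.} You invoke \cref{lem:EquiProj} to pass from $\gamma_\omega \cdot g^{-1}v_H \approx g^{-1}v_H$ to an exact fixed vector. But \cref{lem:EquiProj} takes as input a vector $v$ with $\gamma v = v$ \emph{exactly} and $\|v - v_H\|_p$ small, and projects it onto the $G$-orbit of $v_H$. You have only approximate fixing. What you actually need is \cref{lem:AlmostClosedOrbit}, whose proof uses the arithmetic input \cref{lem:AlmostSolution} (an $\mathcal{O}_{\mathcal{S}}$-linear-algebra lemma) before applying \cref{lem:EquiProj} as a final step. Without that arithmetic step there is no mechanism to promote approximate fixing to exact fixing, and hence no closed orbit.

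\textbf{Second, the commutative case does not close.} Your plan is: at a fixed $z$, if all small returns $\omega$ give pairwise commuting $\gamma_\omega$, confine them to a rank-$\le 2$ abelian subgroup and bound $\sum \|\omega\|_p^{-\alpha}$. But consider a \emph{single} return $\omega$ with $\|\omega\|_p \le p^{-2DN}$. Commutativity is vacuous, yet $\|\omega\|_p^{-\alpha} > p^{DN}$ and (1)(b) fails. Your assertion that the $\|\omega\|_p$ ``cluster on a geometric sequence bounded below by $p^{-O(N)}$'' is precisely what is at stake, and nothing in your setup supplies it; discreteness of $\Gamma$ only bounds the \emph{number} of $\gamma_\omega$, not the size of the corresponding $\omega$. (Also, ``each contributing at most $\eta_0^{-\alpha}$'' for the large returns is backwards: $\|\omega\|_p < \eta_0$ gives $\|\omega\|_p^{-\alpha} > \eta_0^{-\alpha}$.)

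The paper's proof avoids this by organising the contradiction \emph{across} $r$ rather than \emph{within} a single $z$. One assumes (1) fails on a set $E \subset \Z_p$ of measure $> p^{-2N}$, extracts \emph{one} element $\gamma_r$ per bad $r$ (from either a failure of injectivity or a single very small return --- the existence of which follows from $f_{N,\alpha}(z) > p^{DN}$ together with $\#I_N(z) \ll p^{2N}$), and then uses \cref{lem:P-adicInterpolation} on the quadratic $r \mapsto (a_{4N}u_r)^{-1}\mathsf{s}_r(a_{4N}u_r)$ to show that the map $r \mapsto \gamma_r$ has fibres of measure $\ll p^{-5N/2}$, hence $\gg p^{N/2}$ \emph{distinct} $\gamma_r$. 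In the commuting case this is already a contradiction, since a torus intersected with $\Gamma$ contains only $\ll N$ elements of norm $\le p^{11N}$; no bound on $\sum\|\omega\|^{-\alpha}$ is ever needed. In the non-commuting case one picks two such $\gamma_r, \gamma_{r'}$ and applies \cref{lem:AlmostClosedOrbit} at $g_0$ directly --- the conjugation by $(a_{4N}u_r)^{-1}$ is absorbed into $\omega_r$, so the approximate fixing is already at $g_0^{-1}v_H$, and there is no need for \cref{lem:SeparationOfClosedHOrbit} (which separates \emph{different} closed orbits and plays no role here).
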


As in \cite[Section 6]{LM23}, we first give the following lemma similar to \cite[Lemma 6.3]{LM23}. 

\begin{lemma}\label{lem:AlmostClosedOrbit}
There exists $C_8$, $\kappa_6$ and $\kappa_7$ depends on $\Gamma$ such that the following holds. Let $\gamma_1$ and $\gamma_2$ be two non-commuting elements. Let $N > 0$ be a positive integer such that 
\begin{align*}
    p^{-N} \leq C_8^{-1}\bigl(\max\{\|\gamma_1\|_p, \|\gamma_2\|_p\}\bigr)^{-\kappa_6}.
\end{align*}

Suppose there exists $g \in \mathfrak{D}$ such that $\gamma_i g^{-1} v_H = \epsilon_i g^{-1} v_H$ for $i = 1, 2$ and $\epsilon_i \in K[N]$. Then, there exists $g' \in G$ such that 
\begin{align*}
    \|g' - g^{-1}\|_p \leq C_8 p^{-N} \bigl(\max\{\|\gamma_1\|_p, \|\gamma_2\|_p\}\bigr)^{\kappa_7} 
\end{align*}
and $\gamma_i g' v_H =  g' v_H$ for $i = 1, 2$. 
\end{lemma}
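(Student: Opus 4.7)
The plan is to combine the arithmetic reduction theory of \cref{lem:AlmostSolution} with the equivariant projection of \cref{lem:EquiProj}, following the strategy of \cite[Lemma 6.3]{LM23}. The underlying observation is that the hypothesis $\gamma_i g^{-1} v_H = \epsilon_i g^{-1} v_H$ with $\epsilon_i \in K[N]$ says that $w := g^{-1} v_H$ is an approximate $v_0$-adic solution of the linear system $(\gamma_i - I)v = 0$, and we wish to straighten it to an honest solution which still lies in the $G$-orbit of $v_H$.

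First I would lift $\gamma_1, \gamma_2$ to their $\mathcal{S}$-arithmetic avatars $\tilde\gamma_1, \tilde\gamma_2 \in \tilde{\mathbf{G}}(\mathcal{O}_{\mathcal{S}})$ (splitting into the two arithmetic factors separately in the reducible case), view them as acting on $\tilde V = (\wedge^3 \tilde{\LieG})^{\otimes 2}$, and assemble the stacked $\mathcal{O}_{\mathcal{S}}$-linear map
\begin{align*}
A = \begin{pmatrix} \tilde\gamma_1 - I \\ \tilde\gamma_2 - I \end{pmatrix}.
\end{align*}
At the archimedean places, compactness of $\tilde{\mathbf{G}}(F_v)$ gives $\|A\|_{\mathcal{S}_\infty} \ll 1$, while at the unique finite place $v_0$ we have $\|A\|_{v_0} \ll \max\{\|\gamma_1\|_p, \|\gamma_2\|_p\}^{*}$. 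Since $g \in \mathfrak{D}$ lies in a fixed compact set and $\|\epsilon_i - I\|_{v_0} \ll p^{-N}$, the hypothesis translates to $\|Aw\|_{v_0} \ll p^{-N}$.

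I would then invoke \cref{lem:AlmostSolution} with $\mathcal{S}_1 = \{v_0\}$ and $\mathcal{S}_2 = \mathcal{S}_\infty$ to produce $w_0 \in \ker(A) \cap \tilde V_{v_0}$ with
\begin{align*}
\|w - w_0\|_{v_0} \ll p^{-N}\, \max\{\|\gamma_1\|_p, \|\gamma_2\|_p\}^{*}.
\end{align*}
By construction $\gamma_i w_0 = w_0$ for $i = 1, 2$. Since $w$ itself lies in the orbit $G \cdot v_H$ and $w_0$ is close to $w$, the quantitative hypothesis $p^{-N} \leq C_8^{-1} (\max\|\gamma_i\|_p)^{-\kappa_6}$ places $w_0$ inside the domain of the equivariant projection $\Pi$ of \cref{lem:EquiProj}, applied near $g^{-1} v_H$ via $G$-equivariance of the orbit map (the $g$-dependence only costs constant factors because $g \in \mathfrak{D}$). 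This yields $g' \in G$ with $g' v_H = \Pi(w_0)$, and the fixed-point property survives the projection: $\gamma_i g' v_H = \gamma_i \Pi(w_0) = \Pi(w_0) = g' v_H$. Finally, smoothness of the orbit map $G \to G \cdot v_H$ at $v_H$ converts the estimate on $\|w_0 - w\|_{v_0}$ into the desired bound $\|g' - g^{-1}\|_p \leq C_8 p^{-N} \max\{\|\gamma_1\|_p, \|\gamma_2\|_p\}^{\kappa_7}$.

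The main technical obstacle is bookkeeping the polynomial blow-up in $\max\{\|\gamma_1\|_p, \|\gamma_2\|_p\}$ through the chain of reductions: the input norm of $A$, the loss in \cref{lem:AlmostSolution}, the $g$-dependence in transporting from a neighborhood of $g^{-1} v_H$ to one of $v_H$, and the exponent $\bar\kappa$ coming from \cref{lem:EquiProj}. All four contributions must be absorbed into a single polynomial exponent $\kappa_7$, and the hypothesized lower bound on $N$ is calibrated precisely so that each step remains within its range of validity and the perturbed vector $w_0$ lies in the $G$-orbit of $v_H$ rather than a nearby $N_G(H)$-orbit.
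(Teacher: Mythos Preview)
Your proposal is correct and follows essentially the same route as the paper: lift $\gamma_i$ to $\tilde\gamma_i\in\tilde{\mathbf G}(\mathcal O_{\mathcal S})$, form the stacked $\mathcal O_{\mathcal S}$-linear operator $A=(\tilde\gamma_1-I)\oplus(\tilde\gamma_2-I)$, apply \cref{lem:AlmostSolution} at the place $v_0$ to straighten the approximate solution $g^{-1}v_H$ to an exact one, and then use \cref{lem:EquiProj} to land back on the $G$-orbit of $v_H$. The only point where the paper does more than you sketch is the reducible case with $F_1\neq F_2$: there one cannot simply ``split into the two arithmetic factors separately,'' because the two factors have no common $\mathcal O_{\mathcal S}$-structure; the paper passes through restriction of scalars $\mathrm{Res}_{F_i/\Q}\tilde{\mathbf G}_i$ to obtain a single $\Z[\tfrac1p]$-model on which \cref{lem:AlmostSolution} can be run.
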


\begin{proof}
    The proof is essentially the same as \cite[Lemma 6.3]{LM23}. 

    Let $\mathbf{L} = \rho^{-1}(g^{-1} H g)$ be the $\Q_p$-subgroup of $\tilde{\mathbf{G}}(F_{v_0})$ in the case where $\Gamma$ is irreducible or the $\Q_p$-subgroup of $\tilde{\mathbf{G}}_1\bigl((F_1)_{v_1}\bigr) \times \tilde{\mathbf{G}}_2\bigl((F_2)_{v_2}\bigr)$ in the case where $\Gamma$ is reducible. Let $w_0$ be a unit length vector in $\wedge^3 \LieL$. 

    Pick $\tilde{\gamma}_i \in \Gamma_{\mathcal{S}}$ such that $\rho(\tilde{\gamma}_i) = \gamma_i$ for $i = 1, 2$. Note that $\tilde{\gamma}_i$ are matrices with entries in $\mathcal{O}_{\mathcal{S}}$. Moreover, since $\rho$ is an algebraic representation and $F/\Q$ is a finite field extension, we have $\|\tilde{\gamma}_{i}\|_{v_0} \ll \|\gamma_i\|_p^*$ in the case where $\Gamma$ is irreducible and $\max\{\|\tilde{\gamma}_{i}\|_{v_1}, \|\tilde{\gamma}_{i}\|_{v_2} \}\ll \|\gamma_i\|_p^*$ in the case where $\Gamma$ is reducible. Also, there exists $C'$ depending on $\Gamma$ such that $\|\tilde{\gamma}_i\|_{v} \leq C'$ for $v | \infty$ since $\mathbf{G}(F_v)$ is compact for all archimedean place. 

    We first deal with the case where $\Gamma$ is an irreducible lattice. Consider the map 
    \begin{align*}
        A = (\tilde{\gamma}_1 - \Id) \oplus (\tilde{\gamma}_2 - \Id): \wedge^3 \text{Lie}(\mathbf{G}(F_{v_0})) \to \wedge^3 \text{Lie}(\mathbf{G}(F_{v_0})) \oplus \wedge^3 \text{Lie}(\mathbf{G}(F_{v_0})).
    \end{align*}
    We have that $\|Aw_0\|_{v_0} \leq p^{-N}$. By \cref{lem:AlmostSolution}, there exists $w' \in \wedge^3 \text{Lie}(\mathbf{G}(F_{v_0}))$ such that $A w' = 0$ and 
    \begin{align*}
        \|w' - w_0\|_{v_0} \leq{}& C p^{-N} \max\{ \|\tilde{\gamma_1}\|_{\mathcal{S}}, \|\tilde{\gamma_2}\|_{\mathcal{S}}\}^*\\
        \leq{}& C (C')^{*} \eta_0^{-1} p^{-N} \max\{ \|\gamma_1\|_{p}, \|\gamma_2\|_{p}\}^{\kappa'}
    \end{align*}
    for some absolute constant $C$ and $\kappa'$. 

    Therefore, $\tilde{\gamma}_i w' = w'$. By \cref{lem:EquiProj}, there exists $\bar{C}_8$, $\bar{\kappa}_6$ such that if
    \begin{align*}
        \|w' - w_0\|_{v_0} \leq \bar{C}_8^{-1} \max\{ \|\gamma_1\|_p, \|\gamma_2\|_p\}^{-\bar{\kappa}_6},
    \end{align*}
    there exists $\tilde{g} \in \mathbf{G}(F_{v_0})$ such that $\|\tilde{g} - \Id\| \leq C'' \|w' - w_0\|$ and 
    \begin{align*}
        \tilde{\gamma}_i \tilde{g} w_0 = \tilde{g} w_0
    \end{align*}
    for $i = 1, 2$. 

    Now let 
    \begin{align*}
        p^{-N} \leq (\bar{C}_8C)^{-1} (C')^{-*}(\max\{\|\gamma_1\|_p, \|\gamma_2\|_p\})^{-\bar{\kappa}_6 - \kappa'}.
    \end{align*}
    Then there exists $\tilde{g}$ such that
    \begin{align*}
        \|\tilde{g} - \Id\|_{v_0} \leq C'' C (C')^* \eta_0^{-1} p^{-N} \max\{\|\gamma_1\|_p, \|\gamma_2\|_p\}^*
    \end{align*}
    and 
    \begin{align*}
        \tilde{\gamma}_i \tilde{g} w_0 = \tilde{g} w_0
    \end{align*}
    for $i = 1, 2$.     

    Now we deal with the case where $\Gamma$ is a reducible lattice. Note that the above discussion holds if the arithmetic lattice $\Gamma$ satisfies $F_1 = F_2$ in the definition in \cref{sec:ArithmeticLatticesClosedHOrbitVolume}. Therefore, it suffices to deal with the case where $F_1 \neq F_2$. In this case, we will use the description of arithmetic lattices in \cref{rem:RestrictionOfScalar}. We will use $\bar{\mathbf{G}}_i$ to denote $\mathrm{Res}_{F_i/\Q} \tilde{\mathbf{G}}_i$ and we will assume that $\Gamma = \rho_1(\bar{\rho_1}(\hat{\Gamma}_{\mathcal{S}_1'})) \times \rho_2(\bar{\rho_2}(\hat{\Gamma}_{\mathcal{S}_2'}))$ by passing to finite index subgroup. Let $\bar{\mathbf{G}} = \bar{\mathbf{G}}_1 \times \bar{\mathbf{G}}_2$ and let $\rho$ and $\bar{\rho}$ be the corresponding product homomorphism. 

    Let $\bar{\mathbf{L}} = \bar{\rho}^{-1}(\rho^{-1}(g^{-1}Hg) \cap \mathbf{G}_1(F_{1, v_1}) \times \mathbf{G}_2(F_{2,v_2}))$, then $\bar{\mathbf{L}}$ is a $\Q_p$-subgroup of $\bar{\mathbf{G}}(\Q_p)$. Let $\bar{w}_0$ be a unit length vector in $\wedge^3 \bar{\mathfrak{l}}$. Let $\bar{\gamma_i} = (\rho \circ \bar{\rho})^{-1}(\gamma_i)$. We have that the component corresponding to $F_{i, v_i'}$ when $v_i' \neq v_i$ is bounded since it lies in $\tilde{\mathbf{G}}_i(\mathcal{O}_{v_i'})$ which is compact. Therefore, there exists $C'$ such that 
    \begin{align*}
        \|\bar{\gamma_i} - \Id\|_{\infty, p} \leq C'\max\{\|\gamma_1\|_p, \|\gamma_2\|_p\}^*
    \end{align*}
    for $i = 1, 2$. 

    Consider the map
    \begin{align*}
        A = (\bar{\gamma}_1 - \Id) \oplus (\bar{\gamma}_2 - \Id): \wedge^3 \text{Lie}(\mathbf{G}(\Q_p)) \to \wedge^3 \text{Lie}(\mathbf{G}(\Q_p)) \oplus \wedge^3 \text{Lie}(\mathbf{G}(\Q_p)).
    \end{align*}
    We have that $\|A w_0\| \leq p^{-N}$. Using similar argument as in the previous case, we get the same result as the previous case. 

    Combining those two cases and using the fact that $\rho$ is an algebraic map, we get 
    \begin{align*}
        \|\rho(\tilde{g}) - \Id\|_p \leq C_8' p^{-N} \max\{\|\gamma_1\|_p, \|\gamma_2\|_p\}^*
    \end{align*}
    for some $C_8' > 0$. Also, since $g \in \mathfrak{D}$, we have
    \begin{align*}
        \|\rho(\tilde{g})g^{-1} - g^{-1}\|_p \leq C_8 p^{-N} \max\{\|\gamma_1\|_p, \|\gamma_2\|_p\}^*
    \end{align*}
    for some $C_8 > 0$. 

    Let $g' = \rho(\tilde{g})g^{-1}$, we have
    \begin{align*}
        \gamma_i g' v_H = g' v_H. 
    \end{align*}
\end{proof}

\begin{remark}
    In the proof of the case where $\Gamma = \Gamma_1 \times \Gamma_2$, we actually showed that if the condition of the lemma is satisfied, then using \cref{lem:ClosedOrbitVolume}, $\Gamma_1$ and $\Gamma_2$ has to be defined over the same field and they are commensurable up to conjugation. However, a priori, we don't know this information on $\Gamma$. Therefore we need to use \cref{rem:RestrictionOfScalar} to overcome this difficulty. 
\end{remark}

We also give an estimate on $\#I_N(z)$ in the following lemma. 

\begin{lemma}\label{lem:ClosingLemmaNumberOfSheet}
    Let $x \in X$. Then for every $z \in \mathsf{E}_{N}.x$, we have
    \begin{align*}
        \# I_N(z) \ll p^{2N}.
    \end{align*}
\end{lemma}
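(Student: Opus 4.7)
The plan is to associate each $\omega \in I_N(z)$ with a distinct lattice element $\gamma_\omega \in \Gamma$ lying in a bounded region of $G$, then bound the count by discreteness. Fix a representative $g_0 \in \mathfrak{D}$ of $x$ and choose $h_0 \in \mathsf{E}_N$ with $h_0.x = z$. For each $\omega \in I_N(z)$, the condition $\exp(\omega)z \in \mathsf{E}_N.x$ furnishes some $h_\omega \in \mathsf{E}_N$ with $h_\omega.x = \exp(\omega)z$, so the equation $\exp(\omega)h_0.x = h_\omega.x$ rewrites as
\begin{align*}
g_0\gamma_\omega g_0^{-1} = h_\omega^{-1}\exp(\omega)h_0 = (h_\omega^{-1}h_0)\cdot \exp(\Ad_{h_0^{-1}}\omega)
\end{align*}
for some $\gamma_\omega \in \Gamma$, where the first factor lies in $\mathsf{E}_N^{-1}\mathsf{E}_N \subset H$ and the second factor lies in $R := \SL_2(\Q_p)\times\{e\}$.

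The assignment $\omega \mapsto \gamma_\omega$ is injective because $R$ is a normal subgroup of $G$ complementary to the diagonal $H$: every element of $G$ decomposes uniquely as a product $h\cdot r$ with $h \in H$ and $r \in R$, and $\omega$ is recovered from the $R$-factor via $\omega = \Ad_{h_0}(\log r)$. Hence $\#I_N(z)$ is at most the number of $\gamma \in \Gamma$ with $g_0\gamma g_0^{-1} \in (\mathsf{E}_N^{-1}\mathsf{E}_N)\cdot \exp(\Ad_{h_0^{-1}}B_{\mathfrak{r}}(0,\eta_0))$, and by cocompactness of $\mathfrak{D}$ together with the discreteness of $\Gamma$, this count is dominated by the Haar volume of the region divided by $\eta_X^6$.

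Two volume observations finish the argument. First, $\mu_H(\mathsf{E}_N^{-1}\mathsf{E}_N) \ll \mu_H(\mathsf{E}_N) \asymp \beta^2 p^{2N}$, reflecting the bounded-doubling structure of $\mathsf{E}_N$ as witnessed by the decomposition $\mathsf{E}_N = K_{H,\beta}\cdot U_{p^{2N}}\cdot a_N$. Second, although $\Ad_{h_0^{-1}}$ stretches $B_{\mathfrak{r}}(0,\eta_0)$ nonuniformly, with axes in the three weight directions of size $\eta_0\cdot p^{\pm 2N}$ and $\eta_0$, it preserves Haar volume on $\mathfrak{r}$ because $H$ is semisimple, so the restriction $\Ad_{h_0^{-1}}|_\mathfrak{r}$ has determinant one. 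The image therefore has Haar measure $\asymp \eta_0^3$, giving total volume $\ll \beta^2 p^{2N}\eta_0^3$ and hence $\#I_N(z)\ll p^{2N}$ after absorbing the fixed constants $\beta, \eta_0, \eta_X$. The main subtlety is justifying the doubling bound on $\mathsf{E}_N^{-1}\mathsf{E}_N$, which requires exploiting the Iwasawa-type decomposition of $H$ to cancel the expansion introduced by $a_N$ against the contraction in $a_{-N}$; a purely naive estimate would lose a factor of $p^{O(N)}$.
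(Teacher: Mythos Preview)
Your approach is genuinely different from the paper's and the main gap is the doubling claim $\mu_H(\mathsf{E}_N^{-1}\mathsf{E}_N)\ll\mu_H(\mathsf{E}_N)$, which is false in the regime of interest. Writing $\mathsf{E}_N=K_{H,\beta}a_NU_1$ and using $K_{H,\beta}=U^-_\beta D_\beta U_\beta$, one finds
\[
\mathsf{E}_N^{-1}\mathsf{E}_N \;=\; U_1\,(a_{-N}K_{H,\beta}a_N)\,U_1 \;=\; U_1\,U^-_{\beta p^{2N}}\,D_\beta\,U_{\beta p^{-2N}}\,U_1 \;=\; U_1\,U^-_{\beta p^{2N}}\,D_\beta\,U_1.
\]
The product $U_1\cdot U^-_{\beta p^{2N}}$ is the problematic piece: for $|r_1|\le 1$ and $|s|\le \beta p^{2N}$ one has $u_{r_1}u^-_s=u^-_{s/(1+r_1s)}\,d_{1+r_1s}\,u_{r_1/(1+r_1s)}$, and since $|r_1s|$ ranges up to $\beta p^{2N}\gg 1$ (recall $\beta>p^{-N/100}$), the diagonal part $|1+r_1s|$ genuinely ranges over all values up to $\beta p^{2N}$. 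A short fibering computation then gives $\mu_H(\mathsf{E}_N^{-1}\mathsf{E}_N)\asymp \beta p^{2N}\cdot\mu_H(\mathsf{E}_N)$, not $\asymp\mu_H(\mathsf{E}_N)$. The ``cancellation of expansion by $a_N$ against contraction by $a_{-N}$'' you allude to does occur in the \emph{measure} of $a_{-N}K_{H,\beta}a_N$ itself, but is destroyed once you sandwich by the two independent $U_1$ factors. So the naive loss of $p^{O(N)}$ you warn about is in fact unavoidable in this formulation.

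There is a second, related issue: bounding lattice points by ``Haar volume divided by $\eta_X^6$'' requires the region to be thick at scale $\eta_X$ in every direction, but $\exp(\Ad_{h_0^{-1}}B_{\mathfrak{r}}(0,\eta_0))$ has one axis of length $\asymp\eta_0 p^{-2N}$, so the mandatory thickening costs another factor of $p^{2N}$.

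The paper's argument avoids all of this by staying inside $H$ and never unfolding to $\Gamma$. It observes that for each $\omega\in I_N(z)$ the local $H$-leaf $K_{H,\beta}\exp(\omega)z$ lies inside $\mathsf{E}_N.x$, and that distinct $\omega$'s give disjoint leaves by the local product structure (\cref{lem:LocalProduct}). Summing measures yields $\#I_N(z)\le \mu_H(\mathsf{E}_N)/\mu_H(K_{H,\beta})$, which is the desired bound up to the harmless factor $\beta^{-1}$. Your lattice-point idea can in principle be repaired---for instance by projecting to the $R$-factor and counting $\delta$-separated points in the elongated box $\Ad_{h_0^{-1}}B_{\mathfrak{r}}(0,\eta_0)$ directly---but the paper's route is both shorter and avoids the doubling obstruction entirely.
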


\begin{proof}
    For all $z \in \mathsf{E}_{N}.x$ and $\omega \in I_N(z)$, since $K[n] \leqslant K[m]$ for $n > m$, we have that:
    \begin{align*}
        K_H[N_1] \exp(\omega)z \subset \mathsf{E}_{N}.x.
    \end{align*}

    Note that by local product structure, we have
    \begin{align*}
        K_H[N_1] \exp(w_1).z \cap K_H[N_1] \exp(w_2).z = \emptyset
    \end{align*}
    for $w_1 \neq w_2$, $w_1, w_2 \in I_N(z)$.

    Then since $m_H(\mathsf{E}_N) \ll p^{2N}$, $m_H(K_H[N_1]) \asymp p^{-3N_1}$, we get the final conclusion.  
\end{proof}

\begin{proof}[Proof of \cref{pro:MainClosingLemma}]
    Write $x_0 = g_0 \Gamma$ where $g_0 \in \mathfrak{D}$. 
    
    We start by assuming case (1) does not hold. Then there is a subset $E \subset \Z_p$ with measure $|E| > p^{-2N}$ such that for all $r \in E$, letting $h_r = a_{4N}u_r$, at least one of the following holds for $h_r.x_0$: 
    \begin{itemize}
        \item either the map $\mathsf{h} \mapsto \mathsf{h} h_r x_0$ is not injective on $\mathsf{E}_{N}$, 
        \item or there exists $z \in \mathsf{E}_{N}. h_r.x_0$ so that $f_{N, \alpha}(z) > p^{DN}$.  
    \end{itemize}

    \emph{Step 1. Finding lattice elements.}

    Let's start from the former situation. This implies that $\mathsf{h}_1 h_r x_0 = \mathsf{h}_2 h_r x_0$ for some $\mathsf{h}_1, \mathsf{h}_2 \in \mathsf{E}_{N}$, $\mathsf{h}_1 \neq \mathsf{h}_2$.  
    Let $\mathsf{s}_r = \mathsf{h}_2^{-1} \mathsf{h}_1$, we have that 
    \begin{align}\label{eqn:LatticeElementConstruction1}
        h_r^{-1} \mathsf{s}_r h_r = g_0 \gamma_r g_0^{-1}
    \end{align}
    where $\gamma_r \neq e$. 

    Now we focus on the former situation. 

    If $f_{N, \alpha}(z) > p^{DN}$, by taking $N$ large enough such that $p^N > \eta_0^{-1}$, we have $I_N(z) \neq \emptyset$ and 
    \begin{align*}
        \sum_{\omega \in I_N(z)} \|\omega\|_p^{-\alpha} > p^{DN}.
    \end{align*}

    Since $\#I_N(z) \ll p^{N}$, there must exists one $\omega \in I_{N}(z)$ with 
    \begin{align*}
        0 < \|\omega\|_p \ll p^{(-D + 1)N}.
    \end{align*}

    By taking $N$ large enough depending only on $G$, we could assume that: 
    \begin{align*}
        0 < \|\omega\|_p \leq p^{(-D + 2)N}.
    \end{align*}

    Now we have $\mathsf{h}_1, \mathsf{h}_2 \in \mathsf{E}_{N}$ with $\mathsf{h}_1 \neq \mathsf{h}_2$ such that $\exp(\omega)\mathsf{h}_1 h_r.x_0 = \mathsf{h}_2 h_r.x_0$. Thus, 
    \begin{align*}
        \exp(\omega_r) h_r^{-1} \mathsf{s}_{r} h_r. x_0 = x_0
    \end{align*}
    where $\mathsf{s}_{r} = h_2^{-1} h_1$ and $\omega_r = \Ad(h_r^{-1} \mathsf{h}_2^{-1})\omega$. 

    We have $\|\omega_r\|_p \ll p^{12N} \|\omega\|_p \leq p^{(-D + 12)N}$. 

    By letting $N$ large enough, we have: 
    \begin{align*}
        0 < \|\omega_r\|_p \leq p^{(-D + 13)N}
    \end{align*}

    Using $x_0 = g_0 \Gamma$ for $g_0 \in \mathfrak{D}$, we have the following: 
    \begin{align}\label{eqn:LatticeElementConstruction2}
        \exp(\omega_r) h_r^{-1} \mathsf{s}_{r} h_r = g_0 \gamma_r g_0^{-1}
    \end{align}
    where $e \neq \mathsf{s}_r \in H$ and $e \neq \gamma_r \in \Gamma$.

    \emph{Step 2. Some properties of the elements $\gamma_r$. }
    
    We claim that those lattice elements we picked in step 1 has the following two properties: 
    \begin{enumerate}
        \item $\|\gamma_{r}\|_p \leq p^{11N}$;
        \item There are $\gg p^{\frac{1}{2}N}$ distinct elements in $\{\gamma_r: r \in E\}$. 
    \end{enumerate}

    Property (1) follows from direct calculation using the definition of $\gamma_r$. In former situation, we have: 
    \begin{align*}
        \gamma_r = g_0^{-1} h_r^{-1} \mathsf{s}_r h_r g_0. 
    \end{align*}
    Therefore, we have the following estimate: 
    \begin{align*}
        \|\gamma_r^{\pm1}\|_p \ll{}& \|h_r^{-1} \mathsf{s}_r^{\pm1} h_r\|\\
        \ll& p^{10N}
    \end{align*}
    where the implicit constant depends only on $\mathfrak{D}$. By enlarging $N$ depends on this constant, we get that 
    \begin{align*}
        \|\gamma_r^{\pm1}\|_p \leq p^{11N}.
    \end{align*}

    In the latter situation, we have similar estimate: 
    \begin{align*}
        \|\gamma_r^{\pm1}\|_p \ll{}& \|\exp(\omega_r) h_r^{-1} \mathsf{s}_r^{\pm1} h_r\|_p\\
        \ll{}& p^{10N}.
    \end{align*}
    The implicit constant depends only on $\mathfrak{D}$. By enlarging $N$ depends on this constant, we get that 
    \begin{align*}
        \|\gamma_r^{\pm}\|_p \leq p^{11N}.
    \end{align*}

    Now we show that property (2) holds. 

    Let $M_1 > 0$ such that $g\gamma g^{-1} \cap \pm K[M_1] = \emptyset$ for all $\gamma \in \Gamma \backslash \{e\}$ and $g \in \mathfrak{D}$. This is possible since $\Gamma$ is torsion-free. 

    Write $\mathsf{s}_{r} = \begin{pmatrix}
        a_1 & a_2\\
        a_3 & a_4
    \end{pmatrix} \in H$ where $|a_i|_p \leq p^{3N}$. By \cref{eqn:LatticeElementConstruction1} or \cref{eqn:LatticeElementConstruction2}, we have: 
    \begin{align*}
        h_r^{-1} \mathsf{s}_r h_r \notin \pm K[M_1].
    \end{align*}

    Therefore, we have: 
    \begin{align*}
        \|h_r^{-1} \mathsf{s}_r h_r \pm \Id\|_p = \biggl\|u_{-r}\begin{pmatrix}
            a_1 & p^{8N}a_2\\
            p^{-8N}a_3 & a_4
        \end{pmatrix}u_{r} \pm \Id \biggr\|_p \geq p^{-M_1}
    \end{align*}
    which implies that 
    \begin{align*}
        \max\{p^{8N}|a_3|_p, |a_1 - 1|_p, |a_4 - 1|_p\} \geq p^{-M_1},
    \end{align*}
    \begin{align*}
        \max\{p^{8N}|a_3|_p, |a_1 + 1|_p, |a_4 + 1|_p\} \geq p^{-M_1}.
    \end{align*}

    Suppose $p^{8N}|a_3|_p < p^{-M_1}$, then $|a_1a_4 - 1|_p = |a_2 a_3|_p \leq p^{-5N - M_1}$. If $|a_1 - 1|_p \neq |a_4 - 1|_p$ or $|a_1 + 1|_p \neq |a_4 + 1|_p$, we have $|a_1 - a_4|_p \geq p^{-M_1}$. Otherwise $|a_1 - 1|_p |a_4 + 1|_p = |a_1 a_4 - 1 + a_1 - a_4|_p \geq p^{-2M_1}$. Since $|a_1a_4 - 1|_p \leq p^{-5N - M_1}$, we have $|a_1 - a_4|_p \geq p^{-2M_1}$. Putting those discussions together, we have 
    \begin{align}\label{eqn:LatticeElementEstimateAux}
        \max\{p^{8N}|a_3|_p, |a_1 - a_4|_p\} \geq p^{-2M_1}.
    \end{align}

    For all $r \in E$, let $J_r = \{r' \in E: \gamma_{r'} = \gamma_{r}\}$. We claim the following estimate on $|J_r|$: 
    \begin{align*}
        |J_r| \leq p^{-\frac{5}{2}N}.
    \end{align*}

    In former situation, we have
    \begin{align*}
        h_r^{-1} \mathsf{s}_r h_r = h_{r'}^{-1} \mathsf{s}_{r'} h_{r'}.
    \end{align*}

    We have: 
    \begin{align*}
        \mathsf{s}_r = h_r h_{r'}^{-1} \mathsf{s}_{r'} h_{r'} h_{r}^{-1}.
    \end{align*}

    Let $\tau = p^{-8N}(r - r')$, we have
    \begin{align*}
        \mathsf{s}_r = u_{\tau} \mathsf{s}_{r'} u_{-\tau}.
    \end{align*}

    In latter situation, we have
    \begin{align*}
        h_r^{-1} \mathsf{s}_r h_r ={}& \exp(-\omega_r)g_0 \gamma_r g_0^{-1}\\
        ={}& \exp(-\omega_r) \exp(\omega_{r'})h_{r'}^{-1} \mathsf{s}_{r'} h_{r'}\\
        ={}& \exp(\omega_{rr'}) h_{r'}^{-1} \mathsf{s}_{r'} h_{r'}.
    \end{align*}

    We have
    \begin{align*}
        \mathsf{s}_{r} = \exp(\hat{\omega}_{rr'})u_{\tau} \mathsf{s}_{r'}u_{-\tau}
    \end{align*}
    where $\|\hat{\omega}_{rr'}\|_p = \|\Ad(h_r) \omega_{rr'}\|_p \ll p^{(-D + 21)N}$. 

    Since $\|\mathsf{s}_r\|_p \leq p^{3N}$, we have 
    \begin{align*}
        \biggl\| \begin{pmatrix}
            a_1 + a_3\tau & a_2 + (a_4 - a_1)\tau -a_3 \tau^2\\
            a_3 & a_4 - a_3\tau
        \end{pmatrix}\biggr\| \leq p^{3N}.
    \end{align*}

    Now we have
    \begin{align*}
        |p^{8N} a_2 + (a_4 - a_1)(r - r') - a_3 p^{-8N}(r - r')^2| \leq p^{-5N}.
    \end{align*}

    By \cref{eqn:LatticeElementEstimateAux}, at least one of the coefficient of this polynomial has size $\geq p^{-M_1}$. By \cref{lem:P-adicInterpolation}, we have that $|J_r| \ll p^{\frac{-5}{2}N}$. Hence there are $\gg p^{\frac{1}{2}N}$ distinct elements in $\{\gamma_r: r \in \Z_p\}$. 
    
    \emph{Step 3. Zariski closure of the group generated by $\{\gamma_r: r \in \Z_p\}$. }

    \emph{Case 1.} The family $\{\gamma_r: r \in \Z_p\}$ is commutative. 

    We will show this case does not occur. 

    Recall that since $\Gamma$ is a discrete subgroup of $G = \SL_2(\Q_p) \times \SL_2(\Q_p)$, it is cocompact and hence contains no unipotent element. We will get a contradiction using this fact. 

    Let $\mathbf{L}$ be the Zariski closure of $\langle \gamma_r: r \in \Z_p \rangle$. Since $\langle \gamma_r: r \in \Z_p \rangle$ is commutative, so is $\mathbf{L}$. By \cite[Theorem 16.13]{Mil17}, $\mathbf{L} = \mathbf{T}\mathbf{V}$ where $\mathbf{T}$ is a (possibly finite) algebraic subgroup of a torus, $\mathbf{V}$ is a unipotent group. 

    If both $\mathbf{T}$ and $\mathbf{V}$ are non-central, We claim that they have to belong to different factor. For all $\gamma_r = (\gamma_{r, 1}, \gamma_{r, 2})$, let $\gamma_r^s = (\gamma_{r, 1}^s, \gamma_{r, 2}^s)$, $\gamma_r^u = (\gamma_{r, 1}^u, \gamma_{r, 2}^u)$ be the corresponding Jordan decomposition of $\gamma_r$. Note that in $\SL_2(\Q_p)$, an element is either semisimple or unipotent or product of $-\Id_2$ and a unipotent element and its centralizer has to lie in the same class. Therefore, $\mathbf{T}$ and $\mathbf{V}$ has to be in different factor. Without loss of generality, we assume that $\mathbf{T}$ is in the first factor. 

    However, for every torus $T \subset \SL_2(\Q_p)$, we have
    \begin{align}\label{eqn:CountingInTorus}
        \#B_T(e, R) \cap \Gamma \ll \log_p R,
    \end{align}
    where the constant is absolute. Combining the facts that $\#\{\gamma_r: r\in \Z_p\} \geq p^{\frac{1}{2}N}$ and $\|\gamma_r\| \leq p^{11N}$, there must exists $\gamma_{r} \neq \gamma_{r'}$ such that $\gamma_{r, 1} = \gamma_{r', 1}$. Therefore, $(e, \gamma_{r, 2}^{-1}\gamma_{r', 2})$ is a nontrivial unipotent element in $\Gamma$, which leads to a contradiction. 

    Now we have that either one of  $\mathbf{T}$ and $\mathbf{V}$ is central, then $\mathbf{L} = \mathbf{T}'C_{G}$ where $\mathbf{T}'$ is an algebraic subgroup of a torus since there is no unipotent element in $\Gamma$. We get a contradiction by \cref{eqn:CountingInTorus}. 

    \emph{Case 2.} There are $r, r' \in \Z_p$ such that $\gamma_r$ and $\gamma_{r'}$ do not commute. 

    Let $v_H$ be as in \cref{lem:AlmostClosedOrbit}. Then since $\exp(w_r) h_r^{1} \mathsf{s}_r h_r = g_0 \gamma_r g_0^{-1}$,
    \begin{align*}
        \gamma_r g_0^{-1} v_H ={}& g_0^{-1} \exp(w_r) h_r^{-1} \mathsf{s}_r h_r g_0 g_0^{-1} v_H\\
        ={}& \exp(\Ad_{g_0^{-1}} w_r) g_0^{-1} v_H
    \end{align*}
    where $\|\Ad_{g_0^{-1}} w_r\|_p \ll p^{(-D + 21)N}$. Similar statement holds for $r'$. 

    Therefore, if $D$ is large enough, then we could conclude that there exists $g_1 \in G$ with
    \begin{align*}
        \|g_1 - g_0\|_p \leq C_8 p^{(-D + 21+ 11\kappa_7)N}
    \end{align*}
    so that $\gamma_i g_1^{-1} v_H = g_1^{-1} v_H$. 

    Using \cref{lem:ClosedOrbitVolume}, $Hg_1\Gamma$ is a closed $H$-orbit with 
    \begin{align*}
        \vol(Hg_1\Gamma) \ll p^{11\kappa_5N}.
    \end{align*}

    Let $D_0 = \max\{11\kappa_5, 21 + 11\kappa_7\}$, we get case (2). 

\end{proof}

\section{Margulis functions and random walks}\label{sec:MargulisFunctions}

The following is the main proposition of this section. 
\begin{proposition}\label{pro:MargulisMain}
    Let $0 < \eta < \eta_0$, $D \geq D_0 + 1$, and $x_0 \in X$, where $D_0$ is as in \cref{pro:MainClosingLemma}. Then there exists $N_0$ depending on $\eta$ and $X$ so that if $N \geq N_0$, at least one of the following holds: 
    \begin{enumerate}
        \item Let $0 < \epsilon < 10^{-70}$ and $0 < \alpha < 1$. Then there exists $x_1 \in X$, some $M$ with $9N \leq M \leq 9N + 2m_{\alpha}$ and a subset $F \subset B_{\mathfrak{r}}(0, 1)$ containing $0$ with 
        \begin{align*}
            p^N \leq \#F \leq p^{10N},
        \end{align*}
        so that both of the following holds: 
        \begin{enumerate}
            \item $\{\exp(w). x_1: w \in F_1\} \subset \bigl(K_{H, N/R} \cdot a_{M} \cdot \{u_r: r \in \Z_p\}. x_0\bigr)$, where $R > 0$ depends on $D$, $\epsilon$, and $\alpha$. 
            \item $\sum_{w' \neq w} \|w' - w\|_p \leq C_9 \cdot (\#F)^{1 + \epsilon}$ for all $w \in F$, where $C_9$ is an absolute constant. 
        \end{enumerate}
        \item There exists $x \in X$ such that $H.x$ is periodic with $\vol(H.x) \leq p^{D_0 N}$ and
        \begin{align*}
            x \in K[(D - D_0)N].x_0.
        \end{align*}
    \end{enumerate}
\end{proposition}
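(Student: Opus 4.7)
The plan is to follow the random-walk strategy of Section~7 of \cite{LM23}, substituting the $p$-adic contraction estimate from \cref{lem:LinearAlgebra} for its archimedean analogue. First, apply the effective closing lemma (\cref{pro:MainClosingLemma}) at $x_0$ with the constant $D$: if its case~(2) holds we land directly in case~(2) of the present proposition, so we may assume its case~(1). Pick any $r \in J$ and set $\tilde{x} := a_{4N} u_r.x_0$; then $\mathsf{h} \mapsto \mathsf{h}.\tilde{x}$ is injective on $\mathsf{E}_N$, and the Margulis function satisfies $f_{N,\alpha}(z) \leq p^{DN}$ for every $z \in \mathsf{E}_N.\tilde{x}$.

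Next, I would establish a one-step contraction inequality for $f_{\cdot,\alpha}$ under $z \mapsto a_{m_\alpha} u_{r'}.z$. Using \cref{lem:LocalProduct} to re-express transverse displacements at the translated point, the summands of the Margulis function at $a_{m_\alpha} u_{r'}.z$ are indexed, up to boundary corrections, by $\omega \in I_N(z)$ with weights $\|\Ad(a_{m_\alpha} u_{r'}).\omega\|_p^{-\alpha}$. Averaging $r' \in \Z_p$ and applying \cref{lem:LinearAlgebra} term by term yields
\begin{align*}
    \int_{\Z_p} f_{N+m_\alpha,\alpha}(a_{m_\alpha} u_{r'}.z)\,dr' \leq p^{-1} f_{N,\alpha}(z) + B_0,
\end{align*}
where $B_0$ is an absolute constant absorbing boundary contributions from displacements entering or leaving the $\mathsf{E}$-window. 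Iterating this inequality and using a Chebyshev-type selection at each level produces parameters $r_1,\ldots,r_\ell \in \Z_p$ (with $\ell = \lceil 5N/m_\alpha \rceil$ or $\lceil 5N/m_\alpha \rceil + 1$) and a point $x_1 := a_{\ell m_\alpha} u_{r_\ell}\cdots u_{r_1}.\tilde{x}$ at which $f_{M,\alpha}(x_1) \leq p^{\epsilon N}$, with $M = 4N + \ell m_\alpha \in [9N, 9N + 2m_\alpha]$.

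Finally, define $F := I_M(x_1) \cup \{0\} \subset B_{\mathfrak{r}}(0,1)$. The energy bound~(1)(b) reads directly off $f_{M,\alpha}(x_1) \leq p^{\epsilon N}$; the upper size bound $\#F \leq p^{10N}$ follows from \cref{lem:ClosingLemmaNumberOfSheet}, and the lower bound $\#F \geq p^N$ comes from a quantitative count of small displacements available at resolution $p^{-M}$ inherited from the initial thickness of $\mathsf{E}_N$. Property~(1)(a) is built into the construction: the iterated product $a_{\ell m_\alpha} u_{r_\ell}\cdots u_{r_1} a_{4N} u_r$ can be rewritten, via the commutation relation $\mathsf{Q}^H_{\beta,m} a_m u_s K_{H,\beta} \subset a_m u_s K_{H,\beta}$ from \cref{lem:PropertyOfQThickening}, as an element of $K_{H,p^{-N/R}} \cdot a_M \cdot \{u_s : s \in \Z_p\}$ for a suitable $R$ depending on $D$, $\epsilon$, and $\alpha$.

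The main obstacle will be the bookkeeping of boundary terms in the one-step contraction: one must verify that displacements crossing the $\mathsf{E}$-window under $a_{m_\alpha} u_{r'}$ contribute only $O(1)$ rather than degrading the contraction, and that the iterated Chebyshev selection succeeds at every step despite accumulated losses over $\ell \asymp N/m_\alpha$ iterations. The ultrametric norm on $\Q_p$ should make the individual estimates cleaner than in the real case, while the combinatorial structure of the random walk mirrors \cite[Section~7]{LM23} directly.
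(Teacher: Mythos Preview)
Your plan has two genuine gaps, and both stem from trying to run the contraction on a single orbit piece rather than on a set $\mathcal{E}$ indexed by a finite $F$ whose cardinality you track.

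First, the boundary term $B_0$ is not an absolute constant. The displacements that enter the window under $a_{m_\alpha}u_{r'}$ have $\|\omega'\|_p^{-\alpha}$ bounded only by a constant multiple of $\eta_0^{-\alpha}$ each, but there can be as many as $\#I_{N'}(\cdot)$ of them, which by \cref{lem:ClosingLemmaNumberOfSheet} is $\ll p^{2N}$. So the correct one-step inequality is of the form $\int f \leq p^{-1} f + O(p^{2N})$, and iterating drives $f$ down only to scale $p^{2N}$, not $p^{\epsilon N}$. This is exactly why the paper (following \cite{LM23}) formulates the Margulis function relative to a set $\mathcal{E}$ built from a finite $F$: the boundary term then becomes $\psi_{\mathcal{E}}\asymp(\#F)\cdot\eta_0^{-\alpha}$ (\cref{lem:NumberOfSheet}), and \cref{lem:MargulisDimensionIncreaseAverage} gives the dichotomy that either $p^{BN} < p^{\epsilon N/2}(\#F)$ already (the desired energy bound), or the contraction improves $B$ by a definite amount.

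Second, and more seriously, the lower bound $\#F \geq p^N$ does not follow. You select $x_1$ by Chebyshev to make $f_{M,\alpha}(x_1)$ small, and then set $F = I_M(x_1)\cup\{0\}$; but nothing prevents $I_M(x_1)$ from being empty at that particular point. Small Margulis function and large $\#I$ are in tension, and a single Chebyshev selection cannot secure both. The paper produces $F$ by a covering/pigeonhole argument: \cref{lem:MargulisInitialDimension} covers $X$ by $K_\beta$-boxes, finds a box meeting the orbit in measure $\geq \beta^7$, and reads off $\#F \geq \beta^7 p^{2N}$ sheets in that box; then at each subsequent step \cref{lem:MargulisDimensionIncrease} re-localizes via \cref{lem:MargulisCOvering} to maintain $\#F_i \geq \beta^7(\#F_{i-1})$. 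The total number of iterations is bounded by a constant depending on $D,\epsilon,\alpha$, so with $\beta=p^{-\kappa N/2}$ and $\kappa$ small enough one keeps $\#F\geq p^N$ throughout. Your scheme has no analogue of this step, so the claimed lower bound on $\#F$ is unsupported.

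A third, smaller issue: the energy bound~(1)(b) is required for \emph{every} $w\in F$, whereas $f_{M,\alpha}(x_1)$ only controls the sum centred at $w=0$. In the paper's setup this is handled because the Margulis bound $f_{\mathcal{E}}(e,z)$ holds for all $z\in\mathcal{E}$, and \cref{lem:MargulisDimensionEnergy} converts that uniform bound into the energy estimate at every $w$.
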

The proof of this proposition follows basically the same lines as in \cite[Section 7]{LM23}. We record the main arguments to ensure the proof works for this $p$-adic case. We claim no novelty in this section. Since we are always working in $\mathfrak{sl}_2(\Q_p)$, we make the convention that all norm $\|\cdot\|$ is $\|\cdot\|_p$ in this section. 

\subsection{The definition of a Margulis function}
We recall the definition of a Margulis function used in \cite[Section 7]{LM23} in this subsection. 

Let $F$ be a finite set and for every $w \in F$, there exist $x_w \in X$ and a bounded Borel set $\mathsf{E}_w \subset H$ satisfying the following:
\begin{enumerate}
    \item The map $\mathsf{h} \mapsto \mathsf{h}.x_w$ is injective on $\mathsf{E}_w$ for all $w \in F$. 
    \item $\mathsf{E}_w.x_w \cap \mathsf{E}_{w'}.x_{w'} = \emptyset$ for all $w \neq w'$.  
\end{enumerate}
Let $\mathcal{E} = \cup_{w \in F} \mathsf{E}_w. x_w$. Let $\mu_{\mathsf{E}_w}$ be the pushforward of the Haar measure on $H$ under the map $\mathsf{h} \mapsto \mathsf{h}.x_w$ and put
\begin{align*}
    \mu_{\mathcal{E}} = \frac{1}{\sum_{w \in F} m_{H}(\mathsf{E}_w)} \sum_{w \in F} \mu_{\mathsf{E}_w}.
\end{align*}

For every $(h, z) \in H \times \mathcal{E}$, define
\begin{align*}
    I_{\mathcal{E}}(h, z) := \bigl\{w \in \mathfrak{r}: 0 < \|w\|_p < \eta_0, \exp(w) h.z \in h.\mathcal{E}\bigr\}.
\end{align*}
Since $\mathsf{E}_w$ is bounded for all $w \in F$ and $F$ is finite, $I_{\mathcal{E}}(h, z)$ is finite for all $(h, z) \in H \times \mathcal{E}$. 

Fix $0 < \alpha < 1$. Define the Margulis function $f_{\mathcal{E}} := f_{\mathcal{E}, \alpha} : H \times \mathcal{E} \to [0, \infty)$ as following: 
\begin{align*}
    f_{\mathcal{E}}(h, z) = \begin{cases}
        \sum_{w \in I_{\mathcal{E}}(h, z)} \|w\|_p^{-\alpha} & \text{ if } I_{\mathcal{E}}(h, z) \neq \emptyset\\
        \eta_0^{-\alpha} & \text{otherwise}
    \end{cases}.
\end{align*}

Let $\nu = \nu_{\alpha}$ be the probability measure on $H$ defined by
\begin{align*}
    \nu(\varphi) = \int_{\Z_p} \varphi(a_{m_{\alpha}} u_r) \, dr.
\end{align*}
We will use $\nu^{(j)}$ to denote the $j$-fold convolution of $\nu$ for all $j \in \N$.  

Define $\psi_{\mathcal{E}}$ on $H \times \mathcal{E}$ by 
\begin{align*}
    \psi_{\mathcal{E}}(h, z) := \max\{\#I_{\mathcal{E}}(h, z), 1\} \eta_0^{-\alpha}.
\end{align*}

We recall the following lemma from \cite[Lemma 7.1]{LM23}. 
\begin{lemma}\label{lem:MargulisGeneral}
    There exists $C_{10} = C_{10}(\alpha)$ so that for all $\ell \in \N$ and all $z \in \mathcal{E}$, we have
    \begin{align*}
        \int f_{\mathcal{E}}(h, z)\, d\nu^{(\ell)}(h) \leq p^{-\ell} f_{\mathcal{E}}(e, z) + C_{10} \sum_{j = 1}^{\ell} p^{j - \ell} \int \psi_{\mathcal{E}}(h, z) \, d\nu^{(j)} (h).
    \end{align*}
\end{lemma}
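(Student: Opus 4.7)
The strategy is to establish a one-step inequality
\begin{align*}
\int_{\Z_p} f_\mathcal{E}(a_{m_\alpha} u_r h, z)\,dr \leq p^{-1} f_\mathcal{E}(h, z) + C_{10}'\int_{\Z_p} \psi_\mathcal{E}(a_{m_\alpha} u_r h, z)\,dr,
\end{align*}
valid for every $h \in H$ and $z \in \mathcal{E}$, and then to iterate. Realising $\nu^{(\ell)}$ as the law of a random product $h_\ell = \xi_\ell\cdots\xi_1$ with $\xi_i\sim\nu$ i.i.d., the one-step bound applied conditionally on $h_{\ell-1}$ yields the recursion $a_\ell \leq p^{-1}a_{\ell-1} + C_{10}'\,b_\ell$, where $a_j = \int f_\mathcal{E}(\cdot, z)\,d\nu^{(j)}$ and $b_j = \int \psi_\mathcal{E}(\cdot, z)\,d\nu^{(j)}$; unrolling this recursion produces exactly the geometric weight $p^{-\ell}$ on $f_\mathcal{E}(e,z)$ and the telescope weights $p^{j-\ell}$ on $b_j$ stated in the lemma.

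For the one-step inequality I would use that $\mathfrak{r}$ is an ideal of $\LieG$ to reparametrise $I_\mathcal{E}(a_{m_\alpha}u_r h, z)$ via $\tilde w = \Ad(u_{-r}a_{-m_\alpha})w \in \mathfrak{r}$; the defining condition becomes the $r$-independent statement $\exp(\tilde w)h.z \in h.\mathcal{E}$. The sum is then split into \emph{old returns} with $\|\tilde w\|_p < \eta_0$ (so that $\tilde w\in I_\mathcal{E}(h,z)$) and \emph{new returns} with $\|\tilde w\|_p \geq \eta_0$. For the old returns, Fubini and \cref{lem:LinearAlgebra}, applied with $m_\alpha$ chosen so that $\int_{\Z_p}\|a_{m_\alpha}u_r.v\|_p^{-\alpha}\,dr \leq p^{-1}\|v\|_p^{-\alpha}$ for every nonzero $v$, immediately yield the contribution $\leq p^{-1} f_\mathcal{E}(h,z)$.

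For the new returns, \cref{lem:LinearAlgebra} applied to each new $\tilde w$ together with Fubini gives
\begin{align*}
\int_{\Z_p}\sum_{\tilde w\text{ new}}\|\Ad(a_{m_\alpha}u_r)\tilde w\|_p^{-\alpha}\,dr \leq p^{-1}\!\!\sum_{\tilde w\text{ new, contributing}}\|\tilde w\|_p^{-\alpha} \leq p^{-1}\eta_0^{-\alpha}\,\#\{\tilde w\text{ new, contributing}\}.
\end{align*}
The last count is controlled by first analysing the constraints (using the explicit formula for $\Ad(a_{m_\alpha}u_r)\tilde w$ recorded in the introduction together with the ultrametric inequality) to confine contributing new $\tilde w$ to an explicit region of scale $O_\alpha(\eta_0)$. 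Applying \cref{lem:P-adicInterpolation} to the quadratic $\tilde w_{12}-2r\tilde w_{11}-r^2\tilde w_{21}$ in $r$ then relates $\#\{\tilde w\text{ new, contributing}\}$ to $\int_{\Z_p}\#I_\mathcal{E}^{\mathrm{new}}(a_{m_\alpha}u_r h,z)\,dr$, which is in turn bounded by $\eta_0^{\alpha}\int_{\Z_p}\psi_\mathcal{E}(a_{m_\alpha}u_r h,z)\,dr$ since each new return at a given $r$ is in particular an element of $I_\mathcal{E}(a_{m_\alpha}u_r h, z)$. Combining the two cases yields the one-step inequality with a constant $C_{10}'$ depending only on $\alpha$.

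The main obstacle is the bookkeeping for the new returns: the individual summands $\|w\|_p^{-\alpha}$ can be much larger than $\eta_0^{-\alpha}$, so we rely on \cref{lem:LinearAlgebra} integrated over $r$ to convert these into the clean uniform bound $p^{-1}\eta_0^{-\alpha}$, and on \cref{lem:P-adicInterpolation} to ensure that each contributing $\tilde w$ can be charged against $\int\psi_\mathcal{E}(a_{m_\alpha}u_r h, z)\,dr$ in a controlled way. Once the one-step estimate is in hand, the iteration is routine and produces the stated bound with $C_{10} = C_{10}'$.
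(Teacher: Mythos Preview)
Your overall strategy—establish a one-step contraction inequality and iterate—is exactly right, and your treatment of the old returns via \cref{lem:LinearAlgebra} together with the choice of $m_\alpha$ is correct and matches the paper's approach. The iteration step is also fine.

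The gap is in your handling of the new returns. After your steps~1--2 you arrive at the bound
\[
\int_{\Z_p}\sum_{\tilde w\text{ new}}\|\Ad(a_{m_\alpha}u_r)\tilde w\|_p^{-\alpha}\,dr \;\leq\; p^{-1}\eta_0^{-\alpha}\,\#\{\tilde w\text{ new, contributing}\},
\]
and you then try to control this count by $\int_{\Z_p}\psi_\mathcal{E}(a_{m_\alpha}u_r h,z)\,dr$ using \cref{lem:P-adicInterpolation}. But that lemma gives only an \emph{upper} bound on the measure of $r$'s for which a given quadratic is small; to pass from the count of contributing $\tilde w$'s to the $r$-integral of $\#I_\mathcal{E}^{\mathrm{new}}$ you would need a \emph{lower} bound on that measure. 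No such bound is available in general: different new $\tilde w$'s may contribute on disjoint, very short arcs of $r$, so the total count can far exceed $\int_{\Z_p}\#I_\mathcal{E}^{\mathrm{new}}(a_{m_\alpha}u_r h,z)\,dr$.

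The standard (and simpler) fix avoids \cref{lem:LinearAlgebra} for new returns altogether. Since $\|\Ad(u_{-r}a_{-m_\alpha})\|_{\mathrm{op}}\leq p^{2m_\alpha}$ on $\mathfrak r$ for $r\in\Z_p$, any new $\tilde w$ with $\|\tilde w\|_p\geq\eta_0$ forces $\|w\|_p=\|\Ad(a_{m_\alpha}u_r)\tilde w\|_p\geq p^{-2m_\alpha}\eta_0$. Hence, \emph{pointwise in $r$},
\[
\sum_{\substack{w\in I_\mathcal{E}(a_{m_\alpha}u_r h,z)\\ w\text{ new}}}\|w\|_p^{-\alpha}\;\leq\; p^{2\alpha m_\alpha}\eta_0^{-\alpha}\,\#I_\mathcal{E}(a_{m_\alpha}u_r h,z)\;\leq\; p^{2\alpha m_\alpha}\,\psi_\mathcal{E}(a_{m_\alpha}u_r h,z).
\]
Integrating over $r$ and adding the old-return bound gives the one-step inequality with $C_{10}'=p^{2\alpha m_\alpha}$, and \cref{lem:P-adicInterpolation} is not needed here.
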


\begin{proof}
    Use \cref{lem:LinearAlgebra} iterately. For a comprehensive proof, see \cite[Lemma 7.1]{LM23}. 
\end{proof}

\subsection{Some preparatory lemmas}We collect some preparatory lemmas in this subsection. 

Let $0 < \eta \leq \eta_0$ and $0 < \beta \leq \eta^2$. Define
\begin{align*}
    \mathsf{E} = K_{H, \beta} \cdot \bigl\{u_r: |r|_p \leq \frac{1}{p} \eta \bigr\}.
\end{align*}

Let $F \subset B_{\mathfrak{r}}(0, \beta)$ be a finite set, and let $y_0 \in X$. Then for all $w \in F$, we have $\mathsf{h} \mapsto \mathsf{h}. \exp(w) y_0$ is injective on $\mathsf{E}$. Put
\begin{align*}
    \mathcal{E} = \mathsf{E}.\{\exp(w).y_0: w \in F\}.
\end{align*}

The following lemma provides estimate on $\#I(a_{m}u_r, z)$ for $r \in \Z_p$. 

\begin{lemma}\label{lem:NumberOfSheet}
   There exists $C_{11} > 0$ so that for all $m \in \N$, all $r \in \Z_p$, and all $z \in \mathcal{E}$, we have
   \begin{align*}
       \#I_{\mathcal{E}}(a_m u_r, z) \leq C_{11} \beta^{-1} p^m \#F
   \end{align*}
\end{lemma}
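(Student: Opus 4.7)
For each $w \in I_\mathcal{E}(a_m u_r, z)$, set $p_w := \exp(w) a_m u_r.z$ and define the thickening $\Omega_w := \mathsf{Q}^H_{\beta, m} \cdot p_w$, where $\mathsf{Q}^H_{\beta, m}$ is the subgroup of $K_H$ introduced in \cref{lem:PropertyOfQThickening}. The plan is a volume argument with three ingredients: containment $\Omega_w \subset a_m u_r.\mathcal{E}$, pairwise disjointness of the $\Omega_w$'s for distinct $w$, and a Haar measure count.

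Containment follows directly from \cref{lem:PropertyOfQThickening}(2). Writing $p_w = a_m u_r k u_s \exp(w_1).y_0$ with $k \in K_{H,\beta}$, $|s|_p \leq \eta/p$, and $w_1 \in F$, for any $q \in \mathsf{Q}^H_{\beta, m}$ the inclusion $q \cdot a_m u_r k \in a_m u_r K_{H, \beta}$ yields $q p_w \in a_m u_r \mathsf{E} \exp(w_1).y_0 \subset a_m u_r.\mathcal{E}$. For disjointness, suppose $q p_w = q' p_{w'}$ with $q, q' \in \mathsf{Q}^H_{\beta, m}$. Lifting to $G$ produces $\tilde{q} \exp(w) = \exp(w') \hat{\gamma}$, where $\tilde{q} := (q')^{-1} q \in \mathsf{Q}^H_{\beta, m}$ by the subgroup property (\cref{lem:PropertyOfQThickening}(1)) and $\hat{\gamma}$ stabilizes $a_m u_r. z$ in $G$. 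Using $\|\tilde{q} - e\|_p \leq \beta$ together with \cref{lem:BCH}, which gives $\|\exp(w)\exp(-w') - e\|_p \leq \max(\|w\|_p, \|w'\|_p) < \eta_0$, the ultrametric inequality forces $\|\hat{\gamma} - e\|_p \leq \eta_0 < \eta_X$. Injectivity at $a_m u_r.z$ then imposes $\hat{\gamma} = e$, after which the unique local decomposition $G = H \cdot \exp(\mathfrak{r})$ near the identity (cf.\ \cref{lem:LocalProduct}) gives $\tilde{q} = e$ and $w = w'$, contradicting $w \neq w'$.

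For the volume count, observe that $a_m u_r.\mathcal{E}$ decomposes as a disjoint union over $w_1 \in F$ of the pieces $a_m u_r \mathsf{E} \exp(w_1).y_0$ — distinct $w_1$ yield distinct $H$-orbits in $X$ by the same injectivity argument — each of intrinsic $H$-Haar measure $\mu_H(a_m u_r \mathsf{E}) = \mu_H(\mathsf{E}) \asymp \beta^2 \eta$, whereas each $\Omega_w$ has intrinsic $H$-Haar measure $\mu_H(\mathsf{Q}^H_{\beta, m}) \asymp p^{-m}\beta^3$. Summing over $w \in I_\mathcal{E}(a_m u_r, z)$ within each $H$-orbit and combining with containment and disjointness gives
\[
\#I_\mathcal{E}(a_m u_r, z) \cdot p^{-m}\beta^3 \ll \#F \cdot \beta^2 \eta,
\]
which yields the desired bound $\#I_\mathcal{E}(a_m u_r, z) \ll \beta^{-1} p^m \#F$. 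The crux is the disjointness step, which depends on the standing choice $\eta_0 \leq p^{-2}\eta_X$ from the preliminaries to exclude nontrivial $\Gamma$-wraparound, together with \cref{lem:BCH} ensuring that the BCH estimate on $\exp(w)\exp(-w')$ is valid for all $w, w'$ with $\|w\|, \|w'\|_p < \eta_0$.
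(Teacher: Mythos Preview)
Your proof is correct and follows essentially the same route as the paper's: thicken each point $\exp(w)a_mu_r.z$ by $\mathsf{Q}^H_{\beta,m}$, use \cref{lem:PropertyOfQThickening} for containment in $a_mu_r.\mathcal{E}$, establish injectivity/disjointness, and compare Haar volumes. The paper compresses the disjointness into the single assertion that $(\mathsf{h},w')\mapsto \mathsf{h}\exp(w')a_mu_r.z$ is injective on $\mathsf{Q}^H_{\beta,m}\times B_{\mathfrak{r}}(0,\eta_X)$, which is precisely what your $\hat\gamma$-argument proves in detail. One cosmetic remark: in the disjointness step you invoke \cref{lem:BCH} for $\exp(w)\exp(-w')$, but the element at hand is $\exp(-w')\,\tilde q\,\exp(w)$; the cleaner justification is simply that each factor lies in $B_K(e,\eta_0)$ and products of such elements stay in $B_K(e,\eta_0)\subset B_K(e,\eta_X)$ by the ultrametric inequality, so $\hat\gamma=e$ follows from the injectivity radius as you say.
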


\begin{proof}    
    Note that for all $z \in \mathcal{E}$ and $w \in I_{\mathcal{E}}(a_m u_r, z)$, we have
    \begin{align*}
        \exp(w) a_m u_r z \in a_m u_r \mathcal{E}.
    \end{align*}

    By \cref{lem:PropertyOfQThickening}, we have
    \begin{align*}
        Q_{\beta, m}^H a_m u_r K_{H, \beta} \subset a_m u_r K_{H, \beta}
    \end{align*}
    which implies that the map $(\mathsf{h}, w') \mapsto \mathsf{h} \exp(w') a_m u_r.z$ is injective over $ Q_{\beta, m}^{H} \times B_{\mathfrak{r}}(0, \eta_X)$. 

    Now we have
    \begin{align*}
        \beta^2 \eta \#F \gg  \sum_{w \in F} a_m u_r. m_{\mathsf{E}_w}(Q_{\beta, m}^{H}\exp(w).z) \gg p^{-m} \beta^{3} \#I_{\mathcal{E}}(a_m u_r, z),
    \end{align*}
    which shows the claim. 
\end{proof}

The following lemma enable us to compare the energy function and the Margulis function. 

\begin{lemma}\label{lem:MargulisDimensionEnergy}
    Let the notation be as above. Let $w_0 \in F$, then 
    \begin{align*}
        \sum_{w \neq w_0, w \in F} \|w - w_0\|^{-\alpha} \leq f_{\mathcal{E}}(e, z),
    \end{align*}
    where $z = \exp(w_0).y_0$. 
\end{lemma}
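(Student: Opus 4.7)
\medskip

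\noindent\textbf{Proof proposal for \cref{lem:MargulisDimensionEnergy}.}
The plan is to construct, for every $w \in F \setminus \{w_0\}$, an element $\tilde{w} \in I_{\mathcal{E}}(e, z)$ whose $p$-adic norm equals $\|w - w_0\|_p$, and to verify that the assignment $w \mapsto \tilde{w}$ is injective. Once this is done, summing $\|\tilde{w}\|_p^{-\alpha}$ over the image lies inside the sum defining $f_{\mathcal{E}}(e, z)$ and the inequality is immediate.

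The construction uses the Baker--Campbell--Hausdorff lemma (\cref{lem:BCH}). Since $F \subset B_{\mathfrak{r}}(0, \beta)$ with $\beta \leq \eta^2 \leq \eta_0^2$, after taking $\eta_0$ (equivalently $\beta_0$) sufficiently small we may assume $w, w_0 \in B_{\mathfrak{r}}(0, p^{-M_0})$. Applying \cref{lem:BCH} to $w$ and $w_0$ yields $\tilde{w} \in \mathfrak{r}$ with
\[
\exp(w)\exp(-w_0) = \exp(\tilde{w}) \qquad \text{and} \qquad \|\tilde{w}\|_p = \|w - w_0\|_p.
\]
In particular $\tilde{w} \neq 0$ whenever $w \neq w_0$, and $\|\tilde{w}\|_p \leq 2\beta < \eta_0$. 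Moreover
\[
\exp(\tilde{w}).z = \exp(\tilde{w}).\exp(w_0).y_0 = \exp(w).y_0 \in \mathcal{E},
\]
since $e \in \mathsf{E}$ and $\mathcal{E} = \mathsf{E}.\{\exp(w'):y_0 : w' \in F\}$. Therefore $\tilde{w} \in I_{\mathcal{E}}(e, z)$.

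Injectivity of $w \mapsto \tilde{w}$ is clear: if $w_1, w_2 \in F \setminus \{w_0\}$ produce the same $\tilde{w}$, then $\exp(w_1)\exp(-w_0) = \exp(w_2)\exp(-w_0)$, hence $\exp(w_1) = \exp(w_2)$, and injectivity of $\exp$ on $B_{\mathfrak{r}}(0, p^{-M_0})$ forces $w_1 = w_2$. Combining these observations,
\[
\sum_{w \in F,\, w \neq w_0} \|w - w_0\|_p^{-\alpha} \;=\; \sum_{w \in F,\, w \neq w_0} \|\tilde{w}\|_p^{-\alpha} \;\leq\; \sum_{w' \in I_{\mathcal{E}}(e, z)} \|w'\|_p^{-\alpha} \;=\; f_{\mathcal{E}}(e, z),
\]
which is the claimed bound. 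No substantive obstacle is expected; the only mild subtlety is checking the size of $\beta$ permits the application of BCH, which is already guaranteed by the standing assumption $\beta \leq \eta^2 \leq \eta_0^2$ and the definition of $\eta_0$.
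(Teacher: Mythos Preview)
Your proposal is correct and follows essentially the same approach as the paper: use \cref{lem:BCH} to write $\exp(w)\exp(-w_0)=\exp(\tilde w)$ with $\|\tilde w\|_p=\|w-w_0\|_p$, observe $\exp(\tilde w).z=\exp(w).y_0\in\mathcal{E}$, and conclude. Your write-up is in fact more careful than the paper's (you check injectivity and membership in $I_{\mathcal{E}}(e,z)$ explicitly); the only cosmetic slip is that in the ultrametric norm $\|\tilde w\|_p=\|w-w_0\|_p\le\beta$ rather than $2\beta$, which of course changes nothing.
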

\begin{proof}
    For all $w \in F \subset B_{\mathfrak{r}}(0, \beta)$, we have
    \begin{align*}
        \exp(w). y_0 ={}& \exp(w) \exp(-w_0) \exp(w_0). y_0\\
        ={}& \exp(w') z. 
    \end{align*}

    By \cref{lem:BCH}, we have $\|w'\|_p = \|w - w_0\|_p$, which proves the claim. 
\end{proof}

\subsection{Dimension increasing}We will follow \cite[Section 7.2]{LM23} in this subsection. 

We first show that the 'discretized' dimension 
in transverse direction increase in an average way. 

\begin{lemma}\label{lem:MargulisDimensionIncreaseAverage}
    There exists $0 < \kappa_8 = \kappa_8(\alpha) \leq \frac{1}{m_{\alpha}}$ and $N_0$ depending on $X$ so that the following holds. Let $\mathcal{E}$ be defined as in the above subsection. Assume that
    \begin{align*}
        f_{\mathcal{E}}(e, z) \leq p^{BN} \text{ for all } z \in \mathcal{E}
    \end{align*}
    for some positive integers $B$ and $N$. 
    Then for all $0 < \epsilon < 0.1$ and all $\beta \geq p^{-\epsilon N/100}$, at least one of the following holds. 
    \begin{enumerate}
        \item $p^{BN} < p^{\frac{\epsilon N}{2}} (\#F)$. 
        \item For all integers $0 < \ell \leq \kappa_8 \epsilon n$ and all $z \in \mathcal{E}$, we have
        \begin{align*}
            \int f_{\mathcal{E}}(h, z) \, d\nu^{(\ell)}(h) \leq 2 \cdot p^{BN - \ell}.
        \end{align*}
    \end{enumerate}
\end{lemma}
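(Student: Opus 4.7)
The plan is to derive this dimension-increasing bound directly from the one-step recursion of \cref{lem:MargulisGeneral} together with the sheet-count bound of \cref{lem:NumberOfSheet}, carefully calibrating $\kappa_8$ so that the error sum produced by the recursion is dominated by the main term $p^{-\ell} f_{\mathcal{E}}(e,z)$.

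First, I would apply \cref{lem:MargulisGeneral} to $z \in \mathcal{E}$. Using the hypothesis $f_{\mathcal{E}}(e,z) \leq p^{BN}$, this gives
\[
\int f_{\mathcal{E}}(h,z) \, d\nu^{(\ell)}(h) \leq p^{BN-\ell} + C_{10} \sum_{j=1}^{\ell} p^{j-\ell} \int \psi_{\mathcal{E}}(h,z) \, d\nu^{(j)}(h),
\]
so it suffices to bound the error sum by $p^{BN-\ell}$.

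Next, a direct matrix computation using $a_m u_r = u_{p^{-2m}r} a_m$ shows $\supp(\nu^{(j)}) \subset \{a_{jm_\alpha} u_r : r \in \Q_p\}$. Inspection of the proof of \cref{lem:NumberOfSheet} reveals that the hypothesis $r \in \Z_p$ is not essential: the argument passes through \cref{lem:PropertyOfQThickening}(2), whose proof is a direct computation valid for all $r \in \Q_p$. Hence on $\supp(\nu^{(j)})$,
\[
\psi_{\mathcal{E}}(h,z) \leq \eta_0^{-\alpha} C_{11} \beta^{-1} p^{jm_\alpha} \#F.
\]
Summing the resulting geometric series, which is dominated by its $j = \ell$ term, gives
\[
C_{10} \sum_{j=1}^{\ell} p^{j-\ell} \int \psi_{\mathcal{E}}(h,z) \, d\nu^{(j)}(h) \leq C_{12} \, \beta^{-1} \, \#F \cdot p^{\ell m_\alpha}
\]
for a constant $C_{12}$ depending only on $\alpha$ and $\eta_0$.

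Finally, assume case~(1) fails, which is equivalent to $\#F \leq p^{BN - \epsilon N / 2}$. Combined with $\beta \geq p^{-\epsilon N / 100}$, the error becomes at most
\[
C_{12} \, p^{BN - 49 \epsilon N / 100 + \ell m_\alpha}.
\]
Requiring this to be $\leq p^{BN - \ell}$ reduces to $\ell(m_\alpha + 1) \leq 49 \epsilon N / 100 - \log_p(C_{12})$. Choosing $\kappa_8 = 1/(4(m_\alpha + 1))$, which is $\leq 1/m_\alpha$ as required, and taking $N_0$ large enough in terms of $X$ and $\alpha$ to absorb $\log_p(C_{12})$, yields the desired inequality $\int f_{\mathcal{E}}(h,z)\, d\nu^{(\ell)}(h) \leq 2 \cdot p^{BN-\ell}$ for every $\ell \leq \kappa_8 \epsilon N$. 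The bookkeeping step — verifying that \cref{lem:NumberOfSheet} applies uniformly on $\supp(\nu^{(j)})$, which for $j \geq 2$ generally lies outside $\Z_p$ — is the only subtle point; once this is observed, the argument is a straightforward $p$-adic analogue of the ``energy decreases under a random walk'' step familiar from the real case.
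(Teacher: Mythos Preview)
Your proof is correct and follows essentially the same route as the paper: apply \cref{lem:MargulisGeneral}, bound $\psi_{\mathcal{E}}$ on $\supp(\nu^{(j)})$ via \cref{lem:NumberOfSheet}, and calibrate $\kappa_8$ so that the error term is at most $p^{BN-\ell}$ when case~(1) fails. One small remark: your caution about $r\notin\Z_p$ is unnecessary, since the relation $a_{m_\alpha}u_{r_1}a_{m_\alpha}u_{r_2}=a_{2m_\alpha}u_{p^{2m_\alpha}r_1+r_2}$ with $r_1,r_2\in\Z_p$ shows inductively that $\supp(\nu^{(j)})\subset\{a_{jm_\alpha}u_r:r\in\Z_p\}$, so \cref{lem:NumberOfSheet} applies as stated.
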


\begin{proof}
    By \cref{lem:MargulisGeneral}, we have
    \begin{align*}
        \int f_{\mathcal{E}}(h, z) \, d\nu^{(\ell)}(h) \leq p^{-\ell} f_{\mathcal{E}}(e, z) + C_{10}\sum_{j = 1}^{\ell} p^{j - \ell} \int \psi_{\mathcal{E}}(h, z) \, d\nu^{(j)}(h).
    \end{align*}

    By \cref{lem:NumberOfSheet}, we have
    \begin{align*}
        \psi_{\mathcal{E}}(h, z) \leq{}& C_{11} \beta^{-1} p^{jm_{\alpha}} \eta_{X}^{-1} (\#F)\\
        \leq{}& C_{11} \beta^{-2} p^{jm_{\alpha}} (\#F)
    \end{align*}
    for all $h \in \supp \nu^{(j)}$. 

    Therefore, there exists $C > 0$ depending only on $m_{\alpha}$ such that if $j \leq \frac{\epsilon N}{C}$, we have
    \begin{align*}
        \psi_{\mathcal{E}}(h, z) \leq (pC_{10})^{-1} p^{\frac{\epsilon N}{4}} (\#F).
    \end{align*}

    Let $\kappa_8 = 1/C$, and let $\ell \leq \kappa_8\epsilon N$. Then 
    \begin{align}
        \int f_{\mathcal{E}}(h, z) \, d\nu^{(\ell)}(h) \leq p^{-\ell} f_{\mathcal{E}}(e, z) + p^{\frac{\epsilon N}{4}}(\#F) \leq p^{BN - \ell} + p^{\frac{\epsilon N}{4}}(\#F).
    \end{align}
    Therefore, either property~(1) holds, or $\#F \leq p^{BN - \frac{\epsilon}{2}}$, which implies
    \begin{align*}
        \int f_{\mathcal{E}}(h, z) \, d\nu^{(\ell)}(h) \leq p^{BN - \ell} + p^{BN - \frac{\epsilon}{4}} \leq 2 \cdot p^{BN - \ell}.
    \end{align*}
    The last inequality follows from the fact that $p^{\ell} \leq p^{\kappa_8\epsilon N} \leq p^{\epsilon N /4}$.
\end{proof}

From here to \cref{lem:MargulisInitialDimension}, we fix some $0 < \epsilon < 0.01$, and let $\beta = p^{-\kappa n/2}$ for some $0 < \kappa \leq 0.01\kappa_8 \epsilon$ which will be explicated later. The following lemma will convert the estimate we get on average in \cref{lem:MargulisDimensionIncreaseAverage} into pointwise estimate on at most points. 

\begin{lemma}\label{lem:MargulisChebyshev}
    Let the notation be the same as \cref{lem:MargulisDimensionIncreaseAverage}. Let $0 < \epsilon < 0.1$. Assume that 
    \begin{align*}
        \ell = \lfloor \kappa_8 \epsilon n\rfloor \geq 9|\log_p \eta|.
    \end{align*}
    Further assume \cref{lem:MargulisDimensionIncreaseAverage} property~(2) holds. 

    There exists $L_{\mathcal{E}} \subset \supp \nu^{(\ell)}$ with $\nu^{(\ell)}(L_{\mathcal{E}}) \geq 1 - p^{-\frac{\ell}{8}} \geq 1 - \eta$ so that both the following holds. 
    \begin{enumerate}
        \item For all $h_0 \in L_{\mathcal{E}}$, we have
        \begin{align}
            \int f_{\mathcal{E}}(h_0, z)\, d\mu_{\mathcal{E}}(z) \leq p^{BN - \frac{7\ell}{8}}.
        \end{align}
        \item For all $h_0 \in L_{\mathcal{E}}$, there exists $\mathcal{E}(h_0) \subset \mathcal{E}$ with $\mu_{\mathcal{E}}(\mathcal{E}(h_0)) \geq 1 - p^{-\frac{\ell}{8}} \geq 1 - \eta$ such that for all $z \in \mathcal{E}(h_0)$, we have
        \begin{enumerate}
            \item $K_{H, \beta}. z \subset \mathcal{E}$.
            \item $f_{\mathcal{E}}(h_0, z) \leq p^{BN - \frac{3\ell}{4}}$. 
        \end{enumerate}
    \end{enumerate}
\end{lemma}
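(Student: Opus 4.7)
The plan is to derive the pointwise estimates from the average bound of \cref{lem:MargulisDimensionIncreaseAverage}(2) by two successive applications of Markov's inequality, first in the random walk variable $h$ and then in the base point $z$.

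Starting from the bound
$$\int f_{\mathcal{E}}(h, z)\, d\nu^{(\ell)}(h) \leq 2 p^{BN - \ell} \quad\text{for every } z \in \mathcal{E},$$
I would integrate both sides against $\mu_{\mathcal{E}}$ and swap the order of integration by Tonelli (all integrands are nonnegative). Markov's inequality in $h$ then produces a set $L_{\mathcal{E}} \subset \supp \nu^{(\ell)}$ of $\nu^{(\ell)}$-measure at least $1 - p^{-\ell/8}$ on which
$$\int f_{\mathcal{E}}(h_0, z)\, d\mu_{\mathcal{E}}(z) \leq p^{BN - 7\ell/8},$$
which is property~(1). The small constant factor of $2$ coming from \cref{lem:MargulisDimensionIncreaseAverage}(2) can be absorbed either by enlarging the threshold to $2p^{BN - 7\ell/8}$ and then slightly reducing the exponent, or by working with $7\ell/8 + O(1)$; both options are harmless because $\ell \geq 9|\log_p \eta|$ forces $\ell$ to be large.

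For property~(2) I fix $h_0 \in L_{\mathcal{E}}$. Part~(b) is immediate from a second application of Markov: the set of $z \in \mathcal{E}$ where $f_{\mathcal{E}}(h_0, z) > p^{BN - 3\ell/4}$ has $\mu_{\mathcal{E}}$-measure at most $p^{BN - 7\ell/8}/p^{BN - 3\ell/4} = p^{-\ell/8}$, and I take $\mathcal{E}(h_0)$ to be the intersection of its complement with the set of $z$ satisfying~(a). Part~(a) is in fact automatic in the $p$-adic setting: since $K_{H, \beta}$ is a genuine subgroup of $K_H$, the set $\mathsf{E} = K_{H, \beta} \cdot \{u_r : |r|_p \leq \eta/p\}$ is left $K_{H, \beta}$-invariant, so any $z = \mathsf{h}.\exp(w).y_0 \in \mathcal{E}$ satisfies $K_{H, \beta}.z \subset \mathsf{E}.\exp(w).y_0 \subset \mathcal{E}$, using the injectivity of $\mathsf{h} \mapsto \mathsf{h}.\exp(w).y_0$ on $\mathsf{E}$ together with the disjointness $\mathsf{E}_w. x_w \cap \mathsf{E}_{w'}. x_{w'} = \emptyset$ assumed in the setup of the Margulis function. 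Finally the bound $1 - p^{-\ell/8} \geq 1 - \eta$ in both properties follows from $\ell \geq 9|\log_p \eta|$, which gives $p^{-\ell/8} \leq \eta$.

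The argument is essentially a transcription of \cite[Lemma 7.6]{LM23} to the $p$-adic setting, and I do not expect a substantial obstacle: the main concern is keeping track of the numerical factor introduced by \cref{lem:MargulisDimensionIncreaseAverage}(2). A potential subtlety of the archimedean version—namely, a boundary-of-$\mathsf{E}$ issue in part~(a)—disappears here because the ultrametric structure makes $K_{H, \beta}$ an actual group, so $K_{H, \beta} \mathsf{E} = \mathsf{E}$ on the nose rather than only up to boundary error.
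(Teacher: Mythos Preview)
Your proposal is correct and follows essentially the same approach as the paper, which simply states that both properties follow from Chebyshev's inequality and refers to \cite[Lemma~7.6]{LM23}. Your write-up in fact supplies more detail than the paper does, including the nice observation that part~(2)(a) is automatic in the $p$-adic setting because $K_{H,\beta}$ is a genuine subgroup (so no boundary trimming is needed); the only minor over-elaboration is that injectivity and disjointness of the $\mathsf{E}_w.x_w$ are not actually needed for $K_{H,\beta}.z \subset \mathcal{E}$, since $K_{H,\beta}\mathsf{E} = \mathsf{E}$ already gives the inclusion directly.
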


\begin{proof}
    Both properties follows directly from Chebyshev inequality. See \cite[Lemma 7.6]{LM23}. 
\end{proof}

In the remaining part of this section, we will write $\mathsf{Q}_H$ for
\begin{align}
    \mathsf{Q}^{H}_{\beta, \ell m_0} = \{u_s^-: |s|_p \leq p^{-\ell m_0}\beta\} \cdot \{d_{\lambda}: |\lambda - 1|_p \leq \beta\} \cdot \{u_r: |r|_p \leq \beta\}
\end{align}
where $\ell = \kappa_8 \epsilon N$. Put
\begin{align*}
    \mathsf{Q}^G := \mathsf{Q}^H \exp(B_{\mathfrak{r}}(0, \beta)).
\end{align*}

\begin{lemma}\label{lem:MargulisCOvering}
    There exists a covering $\{\mathsf{Q}^G.y_j\}_{j \in \mathcal{J}}$ for $X$ where $\#\mathcal{J} \ll \beta^{-6} p^{\ell m_{\alpha}}$ and the implied constant depends only on $X$. 

    Moreover, for $h_0 \in L_{\mathcal{E}}$ we let 
    \begin{align}
        \mathcal{J}(h_0) = \{j \in \mathcal{J}: h_0. \mu_{\mathcal{E}} (h_0 \mathcal{E} \cap \mathsf{Q}^G.y_j) \geq \beta^{7} p^{-\ell m_{\alpha}}\}
    \end{align}
    and define $\hat{\mathcal{E}}(h_0) \subset \mathcal{E}(h_0)$ by 
    \begin{align*}
        h_0 \hat{\mathcal{E}(h_0)} = h_0 \mathcal{E}(h_0) \cap (\cup_{j \in \mathcal{J}(h_0)} \mathsf{Q}^G.y_j),
    \end{align*}
    then $\mu_{\mathcal{E}}(\hat{\mathcal{E}}(h_0)) \geq 1 - \beta$. 
\end{lemma}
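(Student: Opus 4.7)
The plan is to establish the cardinality bound on $\mathcal{J}$ via a direct volume comparison, then control the mass in ``light'' cells by a union bound, and finally combine with \cref{lem:MargulisChebyshev} to obtain the desired mass estimate.

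First, I would compute $\mu_G(\mathsf{Q}^G)$. Using the Bruhat-type decomposition \cref{eqn:GaussElimination} applied to $\mathsf{Q}^H$, which is a product of three one-dimensional balls of $p$-adic radii $p^{-\ell m_\alpha}\beta$, $\beta$, and $\beta$ in the $U^-$, $D$, and $U$ components respectively, one reads off $\mu_H(\mathsf{Q}^H) \asymp \beta^3 p^{-\ell m_\alpha}$. Since the map $(h, w) \mapsto h\exp(w)$ is a bi-analytic diffeomorphism near the identity (valid for $\beta \leq \beta_0$, cf.~\cref{lem:LocalProduct}), and $\exp(B_{\mathfrak{r}}(0,\beta))$ contributes a factor $\asymp \beta^3$, we get $\mu_G(\mathsf{Q}^G) \asymp \beta^6 p^{-\ell m_\alpha}$. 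Because this scale is well below the injectivity radius of $X$ (since $\beta \leq \eta^2 < \eta_X$ and $p^{-\ell m_\alpha}$ is small), a standard Vitali-type argument, selecting a maximal pairwise-$\mathsf{Q}^G$-disjoint family and then doubling, produces a cover $\{\mathsf{Q}^G.y_j\}_{j \in \mathcal{J}}$ of $X$ with $\#\mathcal{J} \ll \vol(X)/\mu_G(\mathsf{Q}^G) \ll \beta^{-6} p^{\ell m_\alpha}$, where the implied constant depends only on $X$.

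For the second property, note that $h_0.\mu_\mathcal{E}$ is a probability measure on $X$, and by definition every $j \notin \mathcal{J}(h_0)$ contributes less than $\beta^7 p^{-\ell m_\alpha}$. Summing over such $j$ using the cardinality bound from the first step,
\begin{align*}
(h_0.\mu_\mathcal{E})\Bigl(h_0 \mathcal{E} \setminus \bigcup_{j \in \mathcal{J}(h_0)} \mathsf{Q}^G.y_j\Bigr) \leq \#\mathcal{J} \cdot \beta^7 p^{-\ell m_\alpha} \ll \beta.
\end{align*}
Since left-multiplication by $h_0$ is a measure-preserving bijection between $\mathcal{E}$ and $h_0 \mathcal{E}$ (with respect to $\mu_\mathcal{E}$ and its pushforward), this translates into $\mu_\mathcal{E}(\mathcal{E}(h_0) \setminus \hat{\mathcal{E}}(h_0)) \ll \beta$. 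Combining with $\mu_\mathcal{E}(\mathcal{E} \setminus \mathcal{E}(h_0)) \leq p^{-\ell/8}$ from \cref{lem:MargulisChebyshev}, and using that $p^{-\ell/8} \ll \beta$ in the parameter range $\kappa \leq 0.01\kappa_8\epsilon$ with $\ell = \lfloor \kappa_8 \epsilon N\rfloor$ (so that $p^{-\ell/8} = p^{-\Omega(\epsilon N)}$ decays much faster than $\beta = p^{-\kappa N/2}$), we conclude $\mu_\mathcal{E}(\hat{\mathcal{E}}(h_0)) \geq 1 - \beta$ provided $N$ is taken large enough to absorb the implied constants.

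The only real obstacle is numerical bookkeeping: the exponent $\beta^7$ in the definition of $\mathcal{J}(h_0)$ is tuned precisely so that $\#\mathcal{J} \cdot \beta^7 p^{-\ell m_\alpha} \ll \beta$, matching the target mass bound after substitution of the cardinality estimate. The volume computation on $\mathsf{Q}^G$ and the Vitali covering are standard features of a $p$-adic Lie group and present no real difficulty; the content of the lemma is the quantitative matching between the covering cardinality and the mass threshold that defines ``heavy'' cells.
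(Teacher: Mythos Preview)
Your proof is correct and follows essentially the same route as the paper: bound $\#\mathcal{J}$ by a volume comparison and then run the union bound (pigeonhole) over light cells, combining with the mass bound for $\mathcal{E}(h_0)$ from \cref{lem:MargulisChebyshev}. The only cosmetic difference is that the paper builds the cover by first observing that $\mathsf{Q}^H$ is a subgroup of $K_{H,\beta}$ of index $p^{\ell m_\alpha}$ (so a $K_\beta$-cover of $X$ refines into $\asymp \beta^{-6} p^{\ell m_\alpha}$ translates of $\mathsf{Q}^G$), whereas you compute $\mu_G(\mathsf{Q}^G)\asymp \beta^6 p^{-\ell m_\alpha}$ directly and invoke a Vitali-type selection; either yields the same cardinality bound.
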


\begin{proof}
    Since $\mathsf{Q}^H$ is subgroup of $K_{H, \beta}$, we have that $K_{H, \beta}$ is a disjoint union of $p^{\ell m_{\alpha}}$ many translation of $\mathsf{Q}^H$. The rest of the proof follows from a standard pigeonhole argument. See \cite[Lemma 7.6]{LM23}. 
\end{proof}

The following lemma yields a $\mathcal{E}_1$ for some $y_1$ and $F_1$, and with an improved bound on $f_{\mathcal{E}_1}(e, z)$.  
\begin{lemma}\label{lem:MargulisDimensionIncrease}
There exists $N_0 > 0$ so that the following holds for all $N \geq N_0$. Let the notation be as in \cref{lem:MargulisChebyshev} and \cref{lem:MargulisCOvering}. In particular, $0 < \epsilon < 0.01$ and 
\begin{align*}
    \ell = \lfloor \kappa_8 \epsilon N \rfloor \geq 9|\log_p \eta|;
\end{align*}
assume further that $\#F \geq p^N$ and that \cref{lem:MargulisDimensionIncreaseAverage}~(2) holds. 

Let $h_0 \in L_{\mathcal{E}}$ and let $y = y_j$ for some $j \in \mathcal{J}(h_0)$. There exists some 
\begin{align*}
    h_0 z_1 \in h_0 \mathcal{E}(h_0) \cap \mathsf{Q}^G. y
\end{align*}
and a subset 
\begin{align*}
    F_1 \subset B_{\mathfrak{r}}(0, \beta) \text{ with } \#F = \lceil \beta^7 \cdot (\#F) \rceil
\end{align*}
containing $0$, so that both of the following are satisfied. 
\begin{enumerate}
    \item For all $w \in F_1$, we have
    \begin{align*}
        \exp(w) h_0 z_1 \in K_{H, \beta}. h_0 \mathcal{E}(h_0).
    \end{align*}
    \item If we define $\mathcal{E}_1 = \mathsf{E}. \{\exp(w)h_0.z_1: w \in F_1\}$, then at least one of the following holds
    \begin{enumerate}
        \item $f_{\mathcal{E}_1}(e, z) \leq (\#F_1)^{1 + \epsilon}$ for all $z \in \mathcal{E}_1$.
        \item $f_{\mathcal{E}_1}(e, z) \leq p^{(B - \frac{3\kappa_8\epsilon}{4})N}$ for all $z \in \mathcal{E}_1$.
    \end{enumerate}
\end{enumerate}
\end{lemma}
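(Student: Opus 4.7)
The plan is to realize $F_1$ as the set of transverse coordinates of those points of $h_0\mathcal{E}(h_0)$ that fall inside the small cube $\mathsf{Q}^G.y$. Since $\mathsf{Q}^G = \mathsf{Q}^H\cdot \exp(B_{\mathfrak{r}}(0,\beta))$, the local product structure (\cref{lem:PropertyOfQThickening}, \cref{lem:LocalProduct}) writes each point of this intersection uniquely as $\mathsf{h}\exp(w).y$ with $\mathsf{h}\in \mathsf{Q}^H$ and $w\in B_{\mathfrak{r}}(0,\beta)$. Let $F_1'$ denote the projection of $h_0\mathcal{E}(h_0)\cap \mathsf{Q}^G.y$ to its transverse coordinate. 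By definition of $\mathcal{J}(h_0)$, the measure of this set under $h_0\mu_{\mathcal{E}}$ is at least $\beta^7 p^{-\ell m_{\alpha}}$, while a single transverse coordinate contributes at most one $\mathsf{Q}^H$-sheet's worth of mass; combined with the various normalizations ($m_H(\mathsf{E})\asymp \eta\beta^2$, $m_H(\mathsf{Q}^H)\asymp \beta^3 p^{-\ell m_{\alpha}}$, and $\mu_{\mathcal{E}} = (\#F\cdot m_H(\mathsf{E}))^{-1}\sum \mu_{\mathsf{E}_w}$), the quotient bounds $\#F_1'$ below by $\lceil \beta^7 \#F\rceil$ once $N$ is large (the exponent $7$ leaves room to absorb the factors $\eta^{-1},\beta^{-1}$). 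Fix some $w_{*}\in F_1'$ and a corresponding $\mathsf{h}_{*}\in \mathsf{Q}^H$, choose $z_1\in \mathcal{E}(h_0)$ so that $h_0 z_1 = \mathsf{h}_{*}\exp(w_{*}).y$, take $F_1''\subset F_1'$ of cardinality exactly $\lceil \beta^7 \#F\rceil$ containing $w_{*}$, and define $F_1 := \{w-w_{*} : w\in F_1''\}$. Ultrametricity of $\|\cdot\|_p$ keeps $F_1\subset B_{\mathfrak{r}}(0,\beta)$ and forces $0\in F_1$.

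Property (1) is essentially built in: for $w\in F_1$, the point $\exp(w)h_0 z_1$ has transverse coordinate $w+w_{*}\in F_1''$ up to a BCH error (\cref{lem:BCH}) that is negligible thanks to the ultrametric inequality, and up to an adjustment of the $\mathsf{h}$-factor which lies in $\mathsf{Q}^H\subset K_{H,\beta}$. This puts $\exp(w)h_0 z_1$ inside $K_{H,\beta}\cdot h_0\mathcal{E}(h_0)$ as required.

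For property (2), the same local product identification shows that if $z = \mathsf{h}'\exp(w_0)h_0 z_1\in \mathcal{E}_1$, then $I_{\mathcal{E}_1}(e,z)$ consists (up to norm-preserving BCH corrections) of the vectors $w_0'-w_0$ for $w_0'\in F_1\setminus\{w_0\}$, together with negligible same-sheet contributions controlled by \cref{lem:NumberOfSheet}. Hence
\[
    f_{\mathcal{E}_1}(e,z) \asymp \Sigma(w_0):=\sum_{w_0'\in F_1\setminus\{w_0\}}\|w_0'-w_0\|_p^{-\alpha}.
\]
If $\Sigma(w_0)\leq (\#F_1)^{1+\epsilon}$ for every $w_0\in F_1$, conclusion (2a) holds. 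Otherwise, I fall back on \cref{lem:MargulisChebyshev}(2): by property (1), for each $w_0\in F_1$ there exist $z_{w_0}\in \mathcal{E}(h_0)$ and $\mathsf{h}_{w_0}\in K_{H,\beta}$ with $\exp(w_0)h_0 z_1 = \mathsf{h}_{w_0} h_0 z_{w_0}$, and each $w_0'-w_0$ (for $w_0'\in F_1\setminus\{w_0\}$) corresponds via \cref{lem:BCH} to an element of $I_{\mathcal{E}}(h_0,z_{w_0})$ of the same $p$-adic norm. Therefore
\[
    \Sigma(w_0)\leq f_{\mathcal{E}}(h_0, z_{w_0})\leq p^{BN-3\ell/4} = p^{(B-3\kappa_8\epsilon/4)N},
\]
which is conclusion (2b).

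The main obstacle will be the bookkeeping in the first paragraph: combining the measure lower bound $\beta^7 p^{-\ell m_{\alpha}}$ with the single-sheet upper bound so that all losses are truly absorbed into $\beta^7 \#F$ requires careful handling of the Haar normalizations (and of the tiny gap $\mu_{\mathcal{E}}(\mathcal{E}(h_0))\geq 1-\beta$). A secondary technical point is the bidirectional identification of $I_{\mathcal{E}_1}(e,z)$ with pairwise $F_1$-differences: in both directions one invokes \cref{lem:BCH}, which is essential because the BCH correction is of strictly smaller $p$-adic norm than the leading term and hence, by the ultrametric inequality, leaves norms unchanged.
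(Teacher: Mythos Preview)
Your overall strategy is the same as the paper's: list the transverse coordinates $w_i$ of the sheets of $h_0\mathcal{E}(h_0)\cap\mathsf{Q}^G.y$, lower bound their number using the measure bound from $\mathcal{J}(h_0)$, pick $z_1$ on one sheet, and build $F_1$ from differences. The counting and the comparison $f_{\mathcal{E}_1}(e,z)\le f_{\mathcal{E}}(h_0,z_i)$ both go through as you outline.

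There is, however, a real gap in your definition of $F_1$. You set $F_1=\{w'-w_*:w'\in F_1''\}$ using \emph{literal} differences in $\mathfrak{r}$, and then argue property~(1) by saying that $\exp(w'-w_*)h_0z_1$ has transverse coordinate $w'$ ``up to a BCH error that is negligible.'' But compute: since $h_0z_1=\mathsf{h}_*\exp(w_*).y$,
\[
\exp(w'-w_*)\,h_0z_1=\mathsf{h}_*\exp\bigl(\Ad_{\mathsf{h}_*^{-1}}(w'-w_*)\bigr)\exp(w_*).y=\mathsf{h}_*\exp(w'').y,
\]
where $w''=w'+O(\beta^2)$ in general (the $O(\beta^2)$ coming both from $\Ad_{\mathsf{h}_*^{-1}}\ne\Id$ and from the BCH commutator). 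This error lives in the \emph{transverse} direction $\mathfrak{r}$, so it cannot be absorbed into the $K_{H,\beta}$ factor, which only moves you along $H$. Since the only transverse coordinates known to lie in $h_0\mathcal{E}(h_0)$ are the discrete set $F_1'$, and $w''$ need not belong to $F_1'$, your point $\exp(w'-w_*)h_0z_1$ need not lie in $K_{H,\beta}\cdot h_0\mathcal{E}(h_0)$. The ultrametric inequality tells you $\|w''\|_p\le\beta$, but that is not what is at stake.

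The paper fixes this by defining $F_1$ via the \emph{BCH-corrected} differences: set $w_{i1}\in\mathfrak{r}$ so that $\exp(w_{i1})h_0z_1=\mathsf{h}_1\mathsf{h}_i^{-1}h_0z_i$ \emph{exactly} (equivalently $w_{i1}=\Ad_{\mathsf{h}_1}\bigl(\mathrm{BCH}(w_i,-w_1)\bigr)$). Then property~(1) is immediate because $\mathsf{h}_1\mathsf{h}_i^{-1}\in\mathsf{Q}^H\subset K_{H,\beta}$, and \cref{lem:BCH} still gives $\|w_{i1}\|_p=\|w_i-w_1\|_p$, so nothing is lost for property~(2). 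Make this change and your argument goes through. Two smaller remarks: the ``same-sheet contributions'' you mention do not arise (injectivity of $\mathsf{h}\mapsto \mathsf{h}.\exp(w)h_0z_1$ on $\mathsf{E}$ rules them out), and your dichotomy for (2) is unnecessary---once you have $f_{\mathcal{E}_1}(e,z)\le f_{\mathcal{E}}(h_0,z_i)\le p^{BN-3\ell/4}$ via \cref{lem:MargulisChebyshev}(2)(b), alternative~(2b) holds outright.
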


\begin{proof}
    Let $h_0$ and $y = y_j$ be as in the statement. Note that the set $h_0. \mathcal{E}(h_0) \cap \mathsf{Q}^G.y$ is a union of local $H$-orbit. Let $B' \in \mathbb{N}$ be the smallest integer such that 
    \begin{align}
        h_0. \mathcal{E}(h_0) \cap \mathsf{Q}^G.y \subset \bigsqcup_{i = 1}^{B'} \mathsf{Q}^H \exp(w_i).y,
    \end{align}
    where $w \in B_{\mathfrak{r}}(0, \beta)$. 

    For all $1 \leq i \leq B'$, let $z_i \in \mathcal{E}(h_0)$ such that $h_0. z_i \in \mathsf{Q}^G.y$, and
    \begin{align*}
        h_0.z_i = \mathsf{h}_i \exp(w_i).y
    \end{align*}
    for some $\mathsf{h}_i \in \mathsf{Q}^H$. Such $z_i$ always exists since we are picking smallest $B'$. 

    Using \cref{lem:LocalProduct}, we have the following two properties. 
    \begin{enumerate}
        \item $\mathsf{Q}^H h_0 . z_i \cap \mathsf{Q}^H h_0 . z_j = \emptyset$ for $1 \leq i \neq j \leq B'$. 
        \item $h_0\mathcal{E}(h_0) \cap \mathsf{Q}^G. y \subset \bigcup_{i = 1}^{B'} \mathsf{Q}^H \cdot (\mathsf{Q}^H)^{-1} h_0. z_i$. 
    \end{enumerate}

    Now we give a lower bound for $M$. By the definition of $\mathcal{J}(h_0)$, we have
    \begin{align*}
        h_0.\mu_{\mathcal{E}}(h_0. \mathcal{E}(h_0) \cap \mathsf{Q}^G.y) \geq \beta^{7} p^{-\ell m_{\alpha}}.
    \end{align*}
    Therefore, we have
    \begin{align*}
        \sum_{i = 1}^{B'} h_0.\mu_{\mathcal{E}}(\mathsf{Q}^H \exp(w_i).y) \geq h_0.\mu_{\mathcal{E}}(h_0. \mathcal{E}(h_0) \cap \mathsf{Q}^G.y) \geq \beta^{7} p^{-\ell m_{\alpha}},
    \end{align*}
    which implies
    \begin{align*}
        \sum_{i = 1}^{B'} \beta^3 p^{-\ell m_{\alpha}} \beta^{-2} \eta^{-1} (\#F)^{-1} \gg \beta^{7} p^{-\ell m_{\alpha}}
    \end{align*}

    Enlarging $n$, we have
    \begin{align}
        B' \geq \beta^7 \cdot (\#F).
    \end{align}

    For $1 \leq i, j \leq B'$, we have
    \begin{align}
        h_0.z_i ={}& \mathsf{h}_i \exp(w_i).y\\
        ={}& \mathsf{h}_i \exp(w_i) \exp(-w_j) \mathsf{h}_j^{-1} h_0.z_j\\
        ={}& \mathsf{h}_i \mathsf{h}_j^{-1}\exp(w_{ij})h_0.z_j,
    \end{align}
    where $\mathsf{h}_i, \mathsf{h}_j \in \mathsf{Q}^H$ and $\|w_{ij}\|_p = \|w_i - w_j\|_p$ by \cref{lem:BCH}. 

    Let $F_1 \subset \{w_{i1}: 1 \leq i \leq B'\}$ where $\#F_1 = \lceil \beta^{7}(\#F)\rceil$. Let $\mathcal{E}_1 = K_{H, \beta} \{\exp(w)h_0.z_1 : w \in F_1\}$. 

    We now show property~(1). For all $w \in F_1$, $w = w_{i1}$ for some $1 \leq i \leq M$. Hence we have
    \begin{align*}
        \exp(w) h_0.z_1 ={}&  \exp(w_{i1}) h_0.z_1\\
        ={}& \mathsf{h}_1 \mathsf{h}_i^{-1} h_0.z_i\\
        \in{}& K_{H, \beta} h_0 \mathcal{E}(h_0).
    \end{align*}

    Now we show property~(2).We want to compare $f_{\mathcal{E}_1}(e, z)$ for $z \in \mathcal{E}_1$ with $f_{\mathcal{E}}(h_0, z_i)$ with $z \in \mathsf{E} \exp(w_{i1}) h_0.z_1$. 

    For all $z \in \mathcal{E}_1$ and $w \in I_{\mathcal{E}_1}(e, z)$, we have
    \begin{align*}
        z ={}& h u_r \exp(w_{i1}) h_0. z_1\\
        ={}& h u_r \mathsf{h}_1 \mathsf{h}_i^{-1} h_0. z_i
    \end{align*}
    for some $h \in K_{H, \beta}$ and $|r|_p \leq \eta$ and
    \begin{align*}
        \exp(w).z ={}& h' u_{r'} \exp(w_{j1}) h_0.z_1\\
        ={}& h' u_{r'} \mathsf{h}_1 \mathsf{h}_j^{-1} h_0. z_j
    \end{align*}
    for some $h' \in K_{H, \beta}$ and $|r'|_p \leq \eta$. 

    Now we have
    \begin{align*}
        \exp(w) h u_r \mathsf{h}_1 \mathsf{h}_i^{-1} h_0. z_i = h' u_{r'} \mathsf{h}_1 \mathsf{h}_j^{-1} h_0. z_j
    \end{align*}
    which implies
    \begin{align*}
        \exp(w) h u_r \mathsf{h}_1 \mathsf{h}_i^{-1} h_0. z_i ={}& h' u_{r'} \mathsf{h}_1 \mathsf{h}_j^{-1} \mathsf{h}_j \mathsf{h}_i^{-1} \exp(w_{ji}) h_0. z_i\\
        ={}& h' u_{r'} \mathsf{h}_1  \mathsf{h}_i^{-1} \exp(w_{ji}) h_0. z_i. 
    \end{align*}

    By \cref{lem:BCH}, we have $\|w\|_p = \|w_{ji}\|_p$. 

    Now we show that $w_{ji} \in I_{\mathcal{E}}(h_0, z_i)$. By the definition of $w_{ji}$, we have
    \begin{align*}
        \exp(w_{ji}) h_0.z_i ={}& \mathsf{h}_i \mathsf{h}_j^{-1} h_0.z_j\\
        ={}& h_0 h_0^{-1}\mathsf{h}_i h_0 h_0^{-1} \mathsf{h}_j^{-1} h_0.z_j.
    \end{align*}
    Since $\mathsf{h}_i, \mathsf{h}_j \in \mathsf{Q}^H$, we have $h_0^{-1}\mathsf{h}_i h_0, h_0^{-1} \mathsf{h}_j^{-1} h_0 \in K_{H, \beta}$, which shows $\mathsf{h}_i \mathsf{h}_j^{-1} h_0.z_j \in h_0 \mathcal{E}$. 

    Therefore, we have
    \begin{align*}
        f_{\mathcal{E}_1}(e, z) \leq f_{\mathcal{E}}(h_0, z) \leq p^{(B - \frac{3 \kappa_8\epsilon}{4})N}.
    \end{align*}
\end{proof}

We also have the following lemma providing the base case for our inductive argument. 

\begin{lemma}\label{lem:MargulisInitialDimension}
    Let the notation be as in \cref{pro:MargulisMain}. In particular, let $0 < \eta < \eta_0$, $D \geq D_0$, and $x_0 \in X$. There exists $N_1$, depending on $\eta$, $D$, and $X$, so that the following holds for $N \geq N_1$. 

    Let $0 < \epsilon < 10^{-70}$, and let $\beta = p^{-\kappa(N + 1)/2}$ where $0 < \kappa \leq \frac{1}{100}\kappa_8\epsilon$. Then at least one of the following holds. 
    \begin{enumerate}
        \item There exists $F \subset B_{\mathfrak{r}}(0, \beta)$ with 
        \begin{align*}
            p^{2N - 5\kappa(N + 1)} \leq \#F \leq p^{2N + \kappa(N + 1)/2}
        \end{align*}
        and some $y \in \bigl(K_{H, \beta} \cdot a_{4N}\bigr) \cdot \{u_r: r\in \Z_p\}.x_0$ so that if we put 
        \begin{align*}
            \mathcal{E} = \mathsf{E}.\{\exp(w).y: w \in F\},
        \end{align*}
        then $\mathcal{E} \subset \bigl(K_{H, \beta} \cdot a_{5N}\bigr) \cdot \{u_r: r\in \Z_p\}.x_0$ and 
        \begin{align*}
            f_{\mathcal{E}}(e, z) \leq p^{DN}
        \end{align*}
        for all $z \in \mathcal{E}$. 
        \item There exists $x \in X$ such that $H.x$ is periodic with $\vol(H.x) \leq p^{D_0 N}$ and
        \begin{align*}
            x \in K[(D - D_0)N].x_0.
        \end{align*}
    \end{enumerate}
\end{lemma}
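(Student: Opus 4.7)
The plan is to invoke Proposition \ref{pro:MainClosingLemma} with the given $D$ and $x_0$; its case~(2) is exactly case~(2) of the present lemma, so I may assume we are in its case~(1). Picking any $r_\star\in J$ and setting $y^\star:=a_{4N}u_{r_\star}x_0$, the map $\mathsf{h}\mapsto\mathsf{h}.y^\star$ is injective on $\mathsf{E}_N$, and $f_{N,\alpha}(z)\leq p^{DN}$ for every $z\in\mathsf{E}_N.y^\star$ and every $\alpha\in(0,1)$.

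Next I choose a base point $y=h_0.y^\star$ with $h_0$ in the "base slice" $K_{H,\beta}\cdot\{u_s:|s|_p\leq 1\}$ of $\mathsf{E}_N$ (no $a_N$-factor), so that $y\in K_{H,\beta}\cdot a_{4N}\cdot\{u_r:r\in\Z_p\}.x_0$ automatically. Define
\[
F:=\bigl\{w\in\mathfrak{r}:\|w\|_p<\beta,\ \exp(w).y\in\mathsf{E}_N.y^\star\bigr\}\cup\{0\}.
\]
By the injectivity of the $\mathsf{E}_N$-action on $y^\star$, each $w\in F$ determines a unique $h_w\in\mathsf{E}_N$ with $\exp(w).y=h_w.y^\star$. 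The upper bound $\#F\leq p^{2N+\kappa(N+1)/2}$ then follows from the crude count $\#I_N(y)\ll p^{2N}$ of Lemma \ref{lem:ClosingLemmaNumberOfSheet}. For the lower bound $\#F\geq p^{2N-5\kappa(N+1)}$, I average the count $\#F(y)$ over $y\in\mathsf{E}_N.y^\star$: by Fubini, the total number of pairs $(y,w)$ with $y,\exp(w).y\in\mathsf{E}_N.y^\star$ and $\|w\|_p<\beta$ equals (up to constants depending on $X$) $\mu_H(\mathsf{E}_N)$ times the transverse ball volume $\asymp\beta^3$. Since $\mu_H(\mathsf{E}_N)\asymp p^{2N}$, the average of $\#F(y)$ over $y$ is $\asymp p^{2N}\beta^{3}=p^{2N-3\kappa(N+1)/2}$, which comfortably exceeds $p^{2N-5\kappa(N+1)}$; a good $y$ meeting the base-slice condition can then be selected since that slice has positive density inside $\mathsf{E}_N$.

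Finally, setting $\mathcal{E}:=\mathsf{E}\cdot\{\exp(w).y:w\in F\}$, the inclusion $\mathcal{E}\subset K_{H,\beta}\cdot a_{5N}\cdot\{u_r:r\in\Z_p\}.x_0$ follows by a routine computation using the identity $u_r a_{N}=a_N u_{rp^{2N}}$ and $\|\exp(w)\|_p=O(1)$ for $w\in B_{\mathfrak{r}}(0,\beta)$, absorbing the extra $a_N$-factor coming from the $\mathsf{E}_N$-thickening. For the Margulis bound, if $z\in\mathcal{E}\subset\mathsf{E}_N.y^\star$ and $w\in I_{\mathcal{E}}(e,z)$, then $\exp(w).z\in\mathcal{E}\subset\mathsf{E}_N.y^\star$, so $w\in I_N(z)$; hence
\[
f_{\mathcal{E}}(e,z)\leq f_{N,\alpha}(z)\leq p^{DN}.
\]
The hard step will be the lower bound on $\#F$: one must verify that a well-chosen $y$ attains many $\beta$-small transversal returns while lying in the required slice $K_{H,\beta}\cdot a_{4N}\cdot\{u_r\}.x_0$, which requires a careful pigeonhole / double-counting argument along the lines of the analogous base case in \cite[Section~7]{LM23}.
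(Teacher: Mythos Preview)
Your overall structure matches the paper's: invoke \cref{pro:MainClosingLemma}, pass to a good $x=a_{4N}u_{r_\star}x_0$, produce a set $F$ of small transverse displacements at some base point $y$, and bound $f_{\mathcal{E}}(e,\cdot)$ by the closing-lemma bound $f_{N,\alpha}\le p^{DN}$. The Margulis inequality $f_{\mathcal{E}}(e,z)\le f_{N,\alpha}(z)$ is exactly how the paper finishes. However, two points deserve attention.

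\textbf{The averaging argument is not correct as stated.} The identity ``total number of pairs $=\mu_H(\mathsf{E}_N)\times\beta^3$'' is not what Fubini gives: that product is the \emph{total mass} of the pushforward of $\mu_H|_{\mathsf{E}_N}\otimes\mathrm{Leb}|_{B_{\mathfrak r}(0,\beta)}$ to $X$, not a count of self-returns. You must then divide by $\vol(X)$ and pigeonhole over $X$ to find a point of high multiplicity, and only then pull back to a point $y\in\mathsf{E}_N.y^\star$. Also $\mu_H(\mathsf{E}_N)\asymp\beta^2 p^{2N}$, not $p^{2N}$, so the correct output is $\#F(y)\gg\beta^{5}p^{2N}$ (still comfortably $\ge p^{2N-5\kappa(N+1)}$). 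The paper does this pigeonhole explicitly: it covers $X$ by $\asymp\beta^{-6}$ balls $K_\beta.\hat y_j$, selects a ball $\hat y$ with $\mu_{\mathcal C}(\mathcal C\cap K_\beta.\hat y)\ge\beta^7$, and reads off $\#F\ge\beta^7 p^{2N}$ from the number of local $H$-leaves. Your closing remark that ``a careful pigeonhole/double-counting argument'' is needed is right, and that argument replaces your heuristic computation.

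\textbf{The ``base slice'' choice for $y$ is incompatible with your Margulis bound.} You take $h_0\in K_{H,\beta}\cdot\{u_s\}$ with no $a_N$-factor so that $y\in K_{H,\beta}\cdot a_{4N}\cdot\{u_r\}.x_0$. But then $y\notin\mathsf{E}_N.y^\star=K_{H,\beta}\,a_N\{u_s\}.y^\star$, so the $w=0$ sheet $\mathsf{E}.y$ is \emph{not} contained in $\mathsf{E}_N.y^\star$, and your inclusion $\mathcal{E}\subset\mathsf{E}_N.y^\star$ fails precisely at $0\in F$. This breaks the inequality $f_{\mathcal E}(e,z)\le f_{N,\alpha}(z)$ for $z\in\mathsf{E}.y$. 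The paper avoids this by choosing $y$ as one of the leaf points $z_1\in\mathcal C=K_{H,\beta}a_N\{u_r\}.x$; this places $y$ in $K_{H,\beta}a_{5N}\{u_r\}.x_0$, not $a_{4N}$ --- the ``$a_{4N}$'' in the lemma's display for $y$ is evidently a slip, since the proof produces $y\in\mathcal C$ and hence the $a_{5N}$-version. In short: drop the base-slice constraint, take $y\in\mathsf{E}_N.y^\star$ directly (so $0\in F$ is harmless), and accept $y\in K_{H,\beta}a_{5N}\{u_r\}.x_0$, which is all the subsequent argument needs.
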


\begin{proof}
    Let $\mathcal{C}_0 = \{a_{4N}u_r.x_0: r \in \Z_p\}$. Apply \cref{pro:MainClosingLemma} with $x_0$ and $N$, if property~(2) in \cref{pro:MainClosingLemma} holds, then property~(2) in \cref{lem:MargulisInitialDimension} holds. Now we assume property~(1) in \cref{pro:MainClosingLemma} holds. 
    
    Let $x$ be a point given by \cref{pro:MainClosingLemma}~(1). Let $\mathcal{C} = K_{H, \beta}a_{N} \{u_r: r \in \Z_p\}.x$. Let $\mathsf{C} =  K_{H, \beta}a_{N} \{u_r: r \in \Z_p\}$. Let $\mu_{\mathcal{C}}$ be the pushforward of the normalized measure on $\mathsf{C}$. Note that we are using different notations here to avoid confusion with $\mathsf{E} = K_{H, \beta} \cdot \{u_r: |r|_p \leq \eta\}$ in this section. 

    Let $\{K_{\beta}.\hat{y}_j\}_{j \in \mathcal{J}}$ be a disjoint cover of $X$. We have $\#\mathcal{J} \asymp \beta^{-6}$ where the implied constant depends only on $X$. Let $\mathcal{J}'$ be the set of $j \in \mathcal{J}$ such that 
    \begin{align*}
        \mu_{\mathcal{C}}(\mathcal{C} \cap K_{\beta}.\hat{y}_j) \geq \beta^{7}.
    \end{align*}
    We have
    \begin{align*}
        \mu_{\mathcal{C}}\bigl( \mathcal{C} \cap \bigl(\bigcup_{j \in \mathcal{J}'} K_{\beta}.\hat{y}_j\bigr)\bigr) \geq 1 - \beta
    \end{align*}

    Pick $j \in \mathcal{J}'$, let $\hat{y} = \hat{y}_j$. Then we have $w_i \in B_{\mathfrak{r}}(0, \beta)$ and $\mathsf{h}_i \in K_{H, \beta}$, $1 \leq i \leq B'$ so that $\mathsf{h}_i \exp(w_i) \hat{y} \in \mathcal{C}$ and 
    \begin{align*}
        \mathcal{C} \cap K_{\beta}.\hat{y} = \bigcup_{i = 1}^{B'}\mathsf{C}_i\mathsf{h}_i \exp(w_i).\hat{y},
    \end{align*}
    where $\mathsf{C}_i \subset K_{H, \beta}$. 

    Now we estimate $B'$. Note that
    \begin{align*}
        \mu_{\mathcal{C}}(K_{H, \beta}) \ll \beta^3 (p^{2N}\beta^2)^{-1} = \beta p^{-2N},
    \end{align*}
    which implies $B' \gg \beta^6 p^{2N}$. Enlarging $N$, we get 
    \begin{align*}
        B' \geq \beta^7 p^{2N}.
    \end{align*}

    Now we construct $F$ and $\mathcal{E}$. Note that for every $1 \leq i, j \leq B'$, we have
    \begin{align*}
        \mathsf{h}_i \exp(w_i).\hat{y} ={}&  \mathsf{h}_i \exp(w_i)\exp(-w_j)\mathsf{h}_j^{-1} \mathsf{h}_j \exp(w_j).\hat{y}\\
        ={}& \mathsf{h}_i\mathsf{h}_j^{-1} \exp(w_{ij}) \mathsf{h}_j \exp(w_j) \hat{y},
    \end{align*}
    where $\|w_{ij}\|_p = \|w_i - w_j\|_p$ by \cref{lem:BCH} and \cref{lem:LocalProduct}. We also have
    \begin{align*}
        \exp(w_{ij})\mathsf{h}_j\exp(w_j).\hat{y} ={}& \mathsf{h_j} \mathsf{h}_i^{-1} \mathsf{h}_i\exp(w_i).\hat{y}\\
        \in{}& \mathsf{h_j} \mathsf{h}_i^{-1} \mathcal{C} \subset \mathcal{C}.
    \end{align*}

    Let $y = \mathsf{h}_1\exp(w_1).\hat{y}$ and $F = \{w_{i1}: 1 \leq i \leq B'\}$. By \cref{lem:ClosingLemmaNumberOfSheet}, we have 
    \begin{align*}
        \#F \ll p^{2N} \leq \beta^{-1} p^{2N}
    \end{align*}
    by letting $\beta$ small enough. Therefore
    \begin{align}
        p^{2N - 5\kappa(N + 1)} = \beta^7 p^{2N} \leq \#F = B' \leq \beta^{-1} p^{2N} = p^{2N + \kappa(N + 1)/2}.
    \end{align}

    Define $\mathcal{E} = \mathsf{E}.\{\exp(w_{i1}).y: w_{i1} \in F\}$. Using the fact that $K_{H, \beta}$ is a normal subgroup of $K_H$ and a straight forward calculation, we have 
    \begin{align*}
        \mathcal{E} \subset \bigl(K_{H, \beta} \cdot a_{N}\bigr) \cdot \bigl\{ u_r: r \in \Z_p\bigr\}.x. 
    \end{align*}

    Since $x \in \{a_{4N} u_r.x_0: r \in \Z_p\}$, we have
    \begin{align*}
        \mathcal{E} \subset \bigl(K_{H, \beta} \cdot a_{5N}\bigr) \cdot \{u_r: r\in \Z_p\}.x_0.
    \end{align*}

    Now we estimate $f_{\mathcal{E}}(e, z)$ for all $z \in \mathcal{E}$. Let $z_i = \mathsf{h}_i\exp(w_i).\hat{y}$ and $z = h u_r\exp(w_{i1}).y$ for $h \in K_{H, \beta}$ and $|r|_p \leq \eta$. Pick $w \in I_{\mathcal{E}}(e, z)$, we have
    \begin{align*}
        \exp(w).z = h' u_{r'}\exp(w_{j1}).y
    \end{align*}
    for $h' \in K_{H, \beta}$ and $|r'|_p \leq \eta$. We want to compare $f_{\mathcal{E}}(e, z)$ with $f_{\mathcal{C}} (e, z_i)$. 

    Note that 
    \begin{align*}
        z ={}& h u_r\exp(w_{i1}).y\\
        ={}& h u_r \exp(w_{i1}) \mathsf{h}_1\exp(w_1).\hat{y}\\
        ={}& h u_r \mathsf{h}_1 \mathsf{h}_i^{-1} z_{i}.
    \end{align*}
    \begin{align*}
        \exp(w).z ={}& h' u_{r'}\exp(w_{j1}).y\\
        ={}& h' u_{r'}\exp(w_{j1}) \mathsf{h}_1\exp(w_1).\hat{y}\\
        ={}& h' u_{r'}\mathsf{h}_1 \mathsf{h}_j^{-1} z_{j}.
    \end{align*}
    Therefore, we have
    \begin{align*}
        \exp(w) h u_r \mathsf{h}_1 \mathsf{h}_i^{-1} z_{i} ={}&  h' u_{r'}\mathsf{h}_1 \mathsf{h}_j^{-1} z_{j}\\
        ={}& h' u_{r'}\mathsf{h}_1 \mathsf{h_i}^{-1} \exp(w_{ji}).z_{i}.
    \end{align*}
    By \cref{lem:BCH}, $\|w_{ji}\|_p = \|w\|_p$. 

    We claim that $w_{ji} \in I_{\mathcal{C}}(e, z_i)$. 
    Note that
    \begin{align*}
        \exp(w_{ji}).z_i = \mathsf{h}_i \mathsf{h}_{j}^{-1} z_j \in K_{H, \beta}\mathcal{C} = \mathcal{C}.
    \end{align*}

    Therefore, we have
    \begin{align*}
        f_{\mathcal{E}}(e, z) \leq f_{\mathcal{C}}(e, z_i) \leq p^{DN}.
    \end{align*}
\end{proof}

\begin{proof}[Proof of \cref{pro:MargulisMain}]
    We give a sketch of the proof here. For a detailed proof, see \cite[Proposition 7.1]{LM23}. 
    \begin{enumerate}
        \item We first use \cref{lem:MargulisInitialDimension}. If \cref{lem:MargulisInitialDimension}~(2) holds, then \cref{pro:MargulisMain}~(2) holds, which completes the proof. If not, by \cref{lem:MargulisInitialDimension}~(1), we could construct sets $\mathcal{E}_0$ and $F_0$. Now we use \cref{lem:MargulisDimensionIncreaseAverage} to this $\mathcal{E}_0$. If \cref{lem:MargulisDimensionIncreaseAverage}~(1) holds, then we have dimension close to $1$ at the beginning, which completes the proof. 
        \item Now suppose \cref{lem:MargulisDimensionIncreaseAverage}~(2) holds for $\mathcal{E}_0$. Let $L_{\mathcal{E}_0}$ be as in \cref{lem:MargulisChebyshev}. Let $h_0 \in L_{\mathcal{E}_0}$ and let $y_j$ for some $j \in \mathcal{J}(h_0) $ as in \cref{lem:MargulisCOvering}. By \cref{lem:MargulisDimensionIncrease}, there exists $z_1$ such that $h_0.z_1 \in h_0\mathcal{E}(h_0) \cap \mathsf{Q}^G$, $F_1 \subset B_{\mathfrak{r}}(0, \beta)$ containing $0$ with the following properties:
        \begin{enumerate}
            \item $\#F_1 \geq \lceil \beta^7 \cdot (\#F_0)\rceil$. 
            \item For all $w \in F_1$, we have
            \begin{align*}
                \exp(w) h_0.z_1 \in K_{H, \beta} h_0 \mathcal{E}(h_0).
            \end{align*}
            \item Let $\mathcal{E}_1 = \mathsf{E}.\{\exp(w)h_0z_1: w \in F_1\}$, then at least one of the following holds: 
            \begin{enumerate}
                \item $f_{\mathcal{E}_1}(e, z) \leq (\#F)^{1 + \epsilon}$ for all $z \in \mathcal{E}_1$. 
                \item $f_{\mathcal{E}_1}(e, z) \leq p^{(B - \frac{3 \kappa_8\epsilon}{4})N}$
            \end{enumerate}
        \end{enumerate}

        If (c)~(i) holds, then the proof is completed. 

        Otherwise, we could repeat the construction to define $F_2, ...$ and corresponding $\mathcal{E}_2, ...$. 
        \item Let $i_{\max} = \lfloor \frac{4B - 3}{4\kappa_8\epsilon}\rfloor + 1$, then after are at most $i_{\max}$ many steps, we obtain a set $\mathcal{E}$ which satisfies \cref{pro:MargulisMain}~(1). 
    \end{enumerate}

\end{proof}

The proof of \cref{pro:Main} and \cref{thm:Main} will follows exactly as \cite[Section 8]{LM23} combining \cref{pro:MainClosingLemma}, \cref{pro:RestrictedProjMain}, \cref{thm:VenkateshTrick}, and \cref{pro:MargulisMain}. 

\begin{appendices}
\section{\texorpdfstring{Proof of \cref{pro:PAdicSobolev}}{Proof of Proposition 2.12}}\label{sec:Sobolev}
The proof is essentially contained in \cite[Appendix A]{EMMV20}, we include here for completeness. 
\begin{proof}[Proof of \cref{pro:PAdicSobolev}]We prove as in \cite[Appendix A]{EMMV20}. 

\noindent
\textit{Proof of property~(S1).}

Note that if $f$ is $K[m]$-invariant, we have
    \begin{align*}
        |f(x)|^2 = \frac{1}{\mathrm{Vol}(K[m])} \int_{K[m]} |f(k.x)|^2 dk \ll p^{m\dim X} \|f\|_2^2.
    \end{align*}

    Then for general locally constant compactly support $f$, we have 
    \begin{align*}
        |f(x)|^2 = |\sum_m \mathrm{pr}[m]. f (x)|^2 
        \leq{}& (\sum_m p^{-2m})(\sum_m p^{2m} |\mathrm{pr}[m]. f (x)|^2)\\ 
        \ll{}& \sum_m p^{(2 + \dim X)m} \|\mathrm{pr}[m]. f (x)\|_2^2\\
        ={}& \mathcal{S}_{\dim X + 2}^2(f). 
    \end{align*}
\noindent
\textit{Proof of property~(S2).}

Note that if $g \in K$, then $gK[m]g^{-1} = K[m]$. Hence $g\cdot \mathrm{pr}[m]. f = \mathrm{pr}[m](g \cdot f)$. Therefore $\mathcal{S}_d(g\cdot f) = \mathcal{S}_d(f)$ for all $g \in K$. 

If $g \notin K$, by direct calculation, we have
\begin{align*}
    g K[m + 2 \log_p\|g\|] g^{-1} \subset K[m]. 
\end{align*}

By $\mathrm{Av}[l - 1]\mathrm{pr}[l] = 0$, we have 
\begin{align*}
    \mathrm{pr}[m](g \cdot \mathrm{pr}[l]. f) = 0 \text{ unless } |m - l| \leq 2\log_p\|g\|. 
\end{align*}

Therefore, 
\begin{align*}
    \mathcal{S}_d(g\cdot f)^2 ={}& \sum_m p^{md} \|\mathrm{pr}[m](g \cdot \sum_{l} \mathrm{pr}[l] f)\|_2^2\\
    ={}& \sum_m p^{md} (4\log_p\|g\| + 1) \|\mathrm{pr}[m](g \cdot \max_{|l - m| \leq 2\log_p\|g\|}\mathrm{pr}[l] f)\|_2^2\\
    \ll{}& (4\log_p\|g\| + 1)^2 \|g\|^{2d}\mathcal{S}_d(f)^2.
\end{align*}
\noindent
\textit{Proof of property~(S3).}

Note that $m \leq r$, $g \cdot \mathrm{pr}[m]f = \mathrm{pr}[m]f$. 

We argue as in the proof of property~(S1). We have
\begin{align*}
    |(g \cdot f - f) (x)|^2 ={}& |\sum_m \mathrm{pr}[m](g \cdot f - f) (x)|^2\\
    ={}& |\sum_{m > r} \mathrm{pr}[m](g \cdot f - f)(x)|^2\\ 
    \ll{}& p^{-2r} \sum_m p^{2m} |\mathrm{pr}[m](g \cdot f - f)(x)|^2\\ 
    \ll{}& p^{-2r} \mathcal{S}_{\dim X + 2}(f)^2.
\end{align*}
\noindent
\textit{Proof of property~(S4).}

Note that if $l \leq m$, $\mathrm{pr}[m]((\mathrm{pr}[l].f_1) \cdot f_2) = \mathrm{pr}[l].f_1 \cdot \mathrm{pr}[m]. f_2$, we have
\begin{align*}
    \mathcal{S}_d(f_1f_2)^2 ={}& \sum_m p^{md} \|\mathrm{pr}[m]. (f_1f_2)\|_2^2\\
    ={}& \sum_m p^{md} \|\mathrm{pr}[m]. ((\sum_l \mathrm{pr}[l].f_1)\cdot f_2)\|_2^2\\
    \ll{}&  \sum_m p^{md} \|(\sum_{l \leq m} \mathrm{pr}[l].f_1)\cdot \mathrm{pr}[m]f_2\|_2^2 + \sum_m p^{md} \sum_{l > m} \|\mathrm{pr}[m]((\mathrm{pr}[l].f_1)\cdot f_2)\|_2^2\\
    \ll{}& \sum_m p^{md} \|\sum_{l \leq m} \mathrm{pr}[l].f_1\|_{\infty}^2\|\mathrm{pr}[m]. f_2\|_2^2 + \sum_m p^{md} \sum_{l > m} \|(\mathrm{pr}[l].f_1)\cdot f_2)\|_2^2\\
    \ll{}& \|f_1\|_\infty^2\mathcal{S}_d(f_2)^2 +  \sum_{l} (\sum_{m < l} p^{md}) \|\mathrm{pr}[l]f_1\|_2^2 \|f_2\|_{\infty}^2\\
    \ll{}& \|f_1\|_\infty^2\mathcal{S}_d(f_2)^2 + \|f_2\|_\infty^2\mathcal{S}_d(f_1)^2.
\end{align*}

By property~(S1), if $d \geq d_0$, we have $\mathcal{S}_d(f_1f_2) \ll \mathcal{S}_d(f_1)\mathcal{S}_d(f_2)$. 
\end{proof}

\end{appendices}

\nocite{*}
\bibliographystyle{alpha_name-year-title}
\bibliography{References}
\end{document}